\documentclass[twoside]{article}
\usepackage[utf8]{inputenc}
\usepackage[french,english]{babel}
\usepackage[margin=1in]{geometry}
\usepackage[titletoc,title]{appendix}
\usepackage{amsmath,amsfonts,amssymb,mathtools,amsthm}
\usepackage{graphicx,float}
\usepackage[cyr]{aeguill}

\usepackage{hyperref}
\usepackage{cite}
\usepackage{cleveref}
\usepackage{epsfig}
\usepackage{pinlabel}
\usepackage{subcaption}

\usepackage{tikz}
\usepackage{tikz-cd}
\usetikzlibrary{math}
\usepackage{pgf}
\usetikzlibrary{arrows}
\usetikzlibrary{shapes.geometric}
\usetikzlibrary{shapes.misc}

\usepackage{array}
\usepackage{setspace, caption, subcaption}

\usepackage{amsfonts,amssymb,amsmath,amsthm}
\usepackage{enumitem}
\usepackage[all]{xy}

\newcommand{\p}{\ensuremath{\mathbb{P}}}
\newcommand{\Z}{\ensuremath{\mathbb{Z}}}

\DeclareMathOperator{\GL}{GL}
\DeclareMathOperator{\PGL}{PGL}
\DeclareMathOperator{\Aut}{Aut}
\DeclareMathOperator{\Bir}{Bir}
\newcommand{\Pic}{{\mathrm{Pic}}}

\DeclareMathOperator{\Gal}{Gal}
\DeclareMathOperator{\Sym}{Sym}
\DeclareMathOperator{\id}{id}
\DeclareMathOperator{\D}{D}

\DeclareMathOperator{\Spec}{Spec}

\renewcommand{\k}{\mathbf{k}}

\def\dashmapsto{\mapstochar\dashrightarrow}

\newtheorem{definition}{Definition}[section]
\newtheorem{lemma}[definition]{Lemma}
\newtheorem{proposition}[definition]{Proposition}
\newtheorem{theorem}[definition]{Theorem}
\newtheorem{remark}[definition]{Remark}

\newcommand{\myinline}[1]{\raisebox{-.5\height}{{#1}}}

\author{Aurore Boitrel}
\newcommand{\Addresses}{{
  \bigskip
  \footnotesize
  \textsc{Aurore Boitrel, Aix Marseille Univ, CNRS, I2M, SMF, CIRM, Marseille, France}\par\nopagebreak
  \textit{E-mail address:} \texttt{aurore.boitrel@universite-paris-saclay.fr}
}}
\title{Automorphism groups of real rational quartic del Pezzo surfaces}

\date{}

\usepackage{tocloft}

\usepackage{fancyhdr}
\renewcommand{\thepage}{\arabic{page}}
\begin{document}

\maketitle

\begin{abstract}
In this paper we give a complete description of all possible automorphism groups of real $\mathbb{R}$-rational del Pezzo surfaces $X$ of degree $4$, using the description of $X$ as the blow-up of some smooth real quadric surface $Q$ in $\p^{3}_{\mathbb{R}}$. We examine all possible ways to blow up $4$ geometric points on $Q$, illustrate in each case the $\Gal(\mathbb{C}/\mathbb{R})$-action on the conic bundle structures on $X_{\mathbb{C}}$, and use it to give a geometric description of the real automorphism group $\Aut_{\mathbb{R}}(X)$ by generators in terms of automorphisms and birational automorphisms of $Q$. As a consequence, we get which finite subgroups of $\Bir_{\mathbb{C}}(\p^{2})$ can act faithfully by automorphisms on real $\mathbb{R}$-rational del Pezzo surfaces of degree $4$.
\end{abstract}

\tableofcontents

\definecolor{forestgreen}{rgb}{0,0.65,0}
\definecolor{cyan}{rgb}{0.871,0.494,0}

\section{Introduction}
\label{Section:Section0}

Del Pezzo surfaces, i.e. smooth projective surfaces with ample anticanonical divisor class, play an important role in the birational classification of algebraic projective surfaces, as minimal del Pezzo surfaces arise as extremal contractions in the Minimal Model Program (MMP) starting for instance from rationally connected surfaces. Del Pezzo surfaces also play a key role in the classification of finite subgroups of the plane Cremona group $\Bir_{\k}(\p^{2})$, which is the group of birational automorphisms of the projective plane over a perfect field $\k$. The general approach to this classification is the following (for more details, see \cite{di09a,bla09}): if we let $G$ be a finite subgroup of $\Bir_{\k}(\p^{2})$, we can first regularize the birational action of $G$ on a $\k$-rational smooth projective surface $X$ (see, for instance, \cite[Lemma 6]{di09b}), over which $G$ now acts biregularly. Moreover, by applying a $G$-equivariant version of the MMP (see for instance \cite{pro21}), we may further assume that $X$ admits a structure of $G$-Mori fibre space. This means in this case that either $X$ admits a structure of $G$-conic bundle over $\p^{1}_{\k}$ with $G$-invariant Picard rank $2$ or $X$ is a $G$-del Pezzo surface with $G$-invariant Picard rank $1$, and over which $G$ still acts by automorphisms.

The degree $d$ of a del Pezzo surface $X$ is the self-intersection number $K_{X}^{2}$ of its canonical class, and it is an integer between $1$ and $9$. Recall that over an algebraically closed field, a del Pezzo surface is either a quadric surface or a blow-up of $\p^{2}$ at $9-d$ points in general position (\hspace{1sp}\cite{man86}).\\
Over the complex numbers, the complete description of automorphism groups of del Pezzo surfaces of any degree can be found in \cite{di09a}, where the classification up to conjugacy of finite subgroups of $\Bir_{\mathbb{C}}(\p^{2})$ is obtained. See also \cite{bla09} for the exposition of these results. Over an algebraically closed field of positive characteristic, the classification of automorphisms of del Pezzo surfaces of small degree $d \leq 4$ has been recently completed due to \cite{dd19,dm23a,dm23b}. Much less is known when the ground field is not algebraically closed. Nevertheless, some recent (partial) results exist. More precisely, automorphism groups of real rational del Pezzo surfaces have been studied in \cite{rob16,rz18,yas22}. And when the base field is an arbitrary perfect field, rational del Pezzo surfaces of degrees $8$ and $6$ and their automorphism groups have been classified in \cite{sz21}. More recently, automorphism groups of del Pezzo surfaces of degree $5$ over an arbitrary perfect field have been independently described in \cite{zai23} and \cite{boi25}. More precisely, \cite{boi25} presents the classification of automorphism groups of all possible forms of del Pezzo surfaces $X$ of degree $5$ defined over any perfect field $\mathbf{k}$ in explicit geometric terms. In this article, we describe automorphism groups of real rational del Pezzo surfaces of degree $4$. Moreover, maximal automorphism groups of quartic del Pezzo surfaces over a field of characteristic zero have been described in \cite{smi23}; and an extension of the results of the latter article to smooth cubic surfaces can be found in \cite{smi24}. Furthermore, the largest automorphism group of a smooth cubic surface over an arbitrary field has been described in \cite{vik25}.

As a completion of the results of \cite{rob16,yas22}, we describe in this article all possible automorphism groups of real $\mathbb{R}$-rational del Pezzo surfaces of degree $4$. Precisely, we realize each rational real form $X$ of del Pezzo surfaces of degree $4$ as the blow-up of a smooth quadric surface $Q$ in $\p^{3}_{\mathbb{R}}$ and describe the surfaces in families by parametrizing the (four) geometric blown-up points. In each case, we illustrate the $\Gal(\mathbb{C}/\mathbb{R})$-action on the conic bundle structures of $X_{\mathbb{C}}$ and use it to describe geometrically the group $\Aut_{\mathbb{R}}(X)$ by generators in terms of automorphisms and birational automorphisms of $Q$, starting from the description of the group structure of $\Aut(X_{\mathbb{C}})$ given in \cite[Proposition 8.1.3]{bla06}. The action of $\Aut(X_{\mathbb{C}})$ on the five pairs of conic bundle structures on $X_{\mathbb{C}}$ yields a group homomorphism $\rho_{1} : \Aut(X_{\mathbb{C}}) \rightarrow \Sym_{5}$. We denote by $\rho$ the restriction of $\rho_{1}$ on the real automorphism group $\Aut_{\mathbb{R}}(X)$, and we set $A_{0}:=\text{Ker}(\rho)$ and $A':=\text{Im}(\rho) \subset \Sym_{5}$. This gives the following associated short exact sequence \begin{equation} 1 \rightarrow A_{0} \rightarrow \Aut_{\mathbb{R}}(X) \overset{\rho}{\rightarrow} A' \rightarrow 1. \end{equation} Note that we can define a homomorphism of groups $ A_{0} \overset{\gamma}{\hookrightarrow} \lbrace (a_{1},\dots,a_{5}) \in (\Z/2\Z)^{5} \, \vert \, \sum_{i=1}^{5} a_{i} = 0 \rbrace \cong (\Z/2\Z)^{4}$, where $\gamma(g)=(a_{1},\dots,a_{5})$ is such that $a_{i}=1$ if $g$ permutes the two conic bundles in the corresponding $i$-th pair, and $a_{i}=0$ if $g$ fixes both of them. In what follows we give a description of the elements of the groups $A_{0}$ and $A'$ for each rational real form $X$ of del Pezzo surfaces of degree $4$. We denote by $Q_{2,2}$ and $Q_{3,1}$ the smooth quadric surfaces given by $ \lbrace x^2 + y^2 - z^2 - w^2 = 0 \rbrace $  and $ \lbrace x^2 + y^2 + z^2 - w^2 = 0 \rbrace $ in $ \mathbb{P}^{3}_{\mathbb{R}} $, respectively. These are exactly the two non-isomorphic real rational models of $\p^{1}_{\mathbb{C}} \times \p^{1}_{\mathbb{C}}$. For a real del Pezzo surface $S$, we denote by $S(a,b)$ the blow-up of $S$ at $a$ real points and $b$ pairs of complex conjugate points. Note that the case of $X\cong Q_{3,1}(0,2)$ has been entirely treated in \cite[Lemma 4.8 and Propositions 4.9,4.10]{rob16}.
Since the multiplicity of the behaviors of the different classes appearing in this work makes it hard to encapsulate the results in a short way, let us summarizing our main result with the help of Table \ref{Table:Table1_summarising_main_result} below. We will give the corresponding precise statements together with their proofs in Section \ref{sec:Section_2_Chap_4}.

\begin{theorem} Let $X$ be a real rational del Pezzo surface of degree $4$. Then $X$ is one of the five surfaces occuring in Table \ref{Table:Table1_summarising_main_result}, where the groups $A_{0}$ and $A'$ are described in each case.
\begin{table}[!h]
\centering
\normalsize
\renewcommand*{\arraystretch}{2}

\caption{Summary table of the description of the groups $A_{0}$ and $A'$ for each rational real form of del Pezzo surfaces $X$ of degree $4$.}
\label{Table:Table1_summarising_main_result}
\end{table}
\label{Theorem:Theorem_main_result} 
\end{theorem}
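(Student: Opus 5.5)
The plan has two stages. First I classify the real rational forms; then, for each one, I compute $A_0$ and $A'$ from the Galois action on the ten conic bundle classes of $X_{\mathbb{C}}$.

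\textbf{Stage 1: the five forms.} Let $X$ be a real $\mathbb{R}$-rational del Pezzo surface of degree $4$. I will show that $X$ admits a birational morphism $X\to Q$, defined over $\mathbb{R}$, onto a smooth real quadric $Q$.

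Since $X$ is rational, $X(\mathbb{R})\neq\emptyset$, and by Iskovskikh's classification $X$ is not minimal of Picard rank $1$. Running the real MMP, $X$ either contracts a Galois-stable set of disjoint $(-1)$-curves or carries a real conic bundle. In the conic bundle case, one can perform elementary links to reach a real model. In either case, a real point on the exceptional locus or a real section lets us arrange the contraction of a Galois orbit of $4$ disjoint $(-1)$-curves onto a quadric, using that a degree-$5$ surface blown up at a real point and a degree-$8$ surface are dominated appropriately. The quadric $Q$ must be rational with real points, so $Q\cong Q_{2,2}$ or $Q_{3,1}$.

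The four geometric points are either real or come in conjugate pairs. On $Q_{3,1}$ the two rulings are exchanged by complex conjugation, so no two points may lie on a common ruling line. Using that $X$ is obtained from $\mathbb{P}^2$ by blowing up five points, I reduce the list of a priori possibilities $Q(a,b)$ with $a+2b=4$ by real elementary links. For example, $Q_{2,2}(1,b)\cong Q_{3,1}(\ldots)$ via the projection from a point: blowing up one real point of $Q$ and contracting the two ruling lines through it gives $\mathbb{P}^2$ blown up at a conjugate pair, which changes models. The outcome is exactly five isomorphism classes, and these are the rows of Table~\ref{Table:Table1_summarising_main_result}. The distinguishing invariants are the Galois action on the $16$ lines and the topology of $X(\mathbb{R})$, which I use to show that the five classes are pairwise non-isomorphic.

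\textbf{Stage 2: computing $A_0$ and $A'$ in each case.} For each form I record the Galois action on the five pairs of conic bundles. These pairs are the classes $f_1,f_2$ and $f_1+f_2-E_i$ paired with the complementary class, where $f_1,f_2$ are the ruling classes of $Q$ and $E_i$ the exceptional curves. This gives a permutation $\sigma\in(\mathbb{Z}/2\mathbb{Z})^4\rtimes\Sym_5$, namely the image of complex conjugation in $W(D_5)$. Then
\[
\Aut_{\mathbb{R}}(X)=\{g\in\Aut(X_{\mathbb{C}}) \mid g\sigma=\sigma g\}.
\]
By \cite[Proposition 8.1.3]{bla06}, $\Aut(X_{\mathbb{C}})=(\mathbb{Z}/2\mathbb{Z})^4\rtimes H$, where $H$ depends on the parameters of the $4+1$ points. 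The real kernel $A_0$ is therefore the subgroup of $(\mathbb{Z}/2\mathbb{Z})^4$ commuting with $\sigma$. Its elements are realized concretely as automorphisms of $Q$ preserving the blown-up set, composed with quadratic birational maps of $Q$ centred at three or four of the points, namely the birational lifts of the involutions of the pencil of quadrics. I then determine $\gamma(A_0)$ using the equations $x_i\mapsto\pm x_i$ in the diagonal pencil form $\sum x_i^2=\sum\lambda_i x_i^2=0$, where the $\lambda_i$ are real or conjugate according to $\sigma$.

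For $A'$, I parametrize the points on $Q$ in normal form, for instance $(0,0),(\infty,\infty),(1,1),(a,b)$ with the appropriate reality conditions. I then determine which permutations of $\Sym_5$ preserving the cycle type of $\sigma$ are realized by real automorphisms, as a function of the parameters. In practice these are subgroups of the centralizer of $\sigma$ in $\Sym_5$: $\mathbb{Z}/2$, $\mathbb{Z}/4$, $D_4$, and so on. The special parameter values give the larger groups.

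\textbf{Main obstacle.} I expect the hard step to be the parameter-dependent determination of $A'$ in each of the five cases: exhibiting explicit real generators, as automorphisms or birational maps of $Q$, and proving that no other permutations occur. I would handle the exclusion via cross-ratio invariants of the five points on the conic (equivalently, the $\lambda_i$), checking which permutations preserve them under the reality constraints.
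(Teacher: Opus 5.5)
Your $W(D_5)$ set-up is sound, and for $A_0$ it is a shorter route than the paper's. You should say explicitly that $\Aut(X_{\mathbb{C}})$ acts faithfully on $\Pic(X_{\mathbb{C}})$ in degree $4$: that is exactly what makes $\Aut_{\mathbb{R}}(X)$ equal to the centralizer of $\sigma_*$. Since $G_{X_{\mathbb{C}}}\cong(\Z/2\Z)^4$ for every complex form, $A_0$ is then the set of even vectors fixed by the permutation part of $\sigma_*$. This gives $(\Z/2\Z)^3,(\Z/2\Z)^2,(\Z/2\Z)^3,(\Z/2\Z)^4,(\Z/2\Z)^4$ at once. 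The paper instead proves the lower bounds by writing down explicit (birational) involutions of $Q$, which you only need for the geometric description of generators.

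The genuine gap is $A'$. It is the real content of the table, and you never compute it. Worse, the exclusion test you propose is too coarse. Cross-ratios of the five roots $\lambda_i$, together with their reality pattern, only detect $H_{X_{\mathbb{C}}}$ and the permutation part $\pi$ of $\sigma_*=v\pi$; they ignore the sign part $v$.

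Work with respect to the first members $\mathcal{C}_i$, which are the classes $L-E_i$ of a $\p^{2}$-model, so Blanc's section acts without sign changes. Then $h\in H_{X_{\mathbb{C}}}$ lies in $A'$ if and only if $h\pi=\pi h$ and $v+h(v)=a+\pi(a)$ for some $a\in G_{X_{\mathbb{C}}}$.

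\begin{itemize}
\item For $Q_{3,1}(0,2)$ one has $\sigma_*=(0,1,1,0,0)\cdot(45)$, which forces $h$ to fix $\mathcal{R}_1$. Yet for $\mu=e^{i\pi/3}$ there is an automorphism of $\p^{1}_{\mathbb{C}}\times\p^{1}_{\mathbb{C}}$ that preserves the rulings, fixes $p$ and cycles $\overline{p}\mapsto q\mapsto\overline{q}$. So $(123)\in H_{X_{\mathbb{C}}}$ commutes with $(45)$ but is not in $A'$.
\item For $Q_{2,2}(0,2)$ one has $\pi=\id$ and $v=(0,0,0,1,1)$, so $A'=H_{X_{\mathbb{C}}}\cap\mathrm{Stab}(\{4,5\})$.
\end{itemize}

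The root pattern is identical for $Q_{2,2}(0,2)$ and $Q_{2,2}(4,0)$, and likewise for $Q_{3,1}(0,2)$ and $Q_{3,1}(4,0)$, although their groups $A'$ differ. With the correct criterion in place, you still have to compute $H_{X_{\mathbb{C}}}$ in terms of each normal form; transpositions can be discarded, since a M\"obius map fixing three roots is trivial. That computation is where the conditions in the paper come from:
\begin{itemize}
\item $|\mu|=1$ and $\mu\in\mathbb{R}$ for the two $Q_{3,1}$ cases with parameter $\mu$;
\item $\lambda=e^{\pm i\pi/3}$ and $\lambda+\overline{\lambda}=1$ for $Q_{3,1}(4,0)$;
\item $k_1^2k_2-2k_1+k_2=0$ for $Q_{2,2}(0,2)$;
\item $\mu_1+\mu_2=\mu_1\mu_2$ and the set $\mathcal{S}$ for $Q_{2,2}(4,0)$.
\end{itemize}
None of this is carried out.

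In case $Q_{3,1}(2,1)$ one has $v=0$, so every element of $H_{X_{\mathbb{C}}}$ commuting with $(23)(45)$ lifts. The paper's own proof exhibits, for $\mu+\overline{\mu}=2$, a real involution $\psi$ of type $(25)(34)$. That case is not recorded in the statement of Proposition~\ref{Prop:Proposition_Aut(X)_Q31(2,1)}(2), so compare carefully when you do this case.

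For Stage 1, the paper does not derive the list of five forms; it takes them from the known classification, and citing that suffices. Your sketch of it contains errors:
\begin{itemize}
\item $a+2b=4$ forces $a$ even, so $Q_{2,2}(1,b)$ never occurs.
\item The two rulings through a real point of $Q_{2,2}$ are real; the conjugate-pair statement holds only on $Q_{3,1}$.
\item Galois orbits have size at most $2$, so there is no orbit of four curves.
\item A minimal real conic bundle of degree $4$ must be excluded by non-rationality. Elementary links replace $X$ by another surface and cannot give a morphism $X\to Q$.
\item These are five families, not five isomorphism classes.
\item The conic classes in the three non-ruling pairs are $f_1+f_2-E_i-E_j$, not $f_1+f_2-E_i$.
\end{itemize}
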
 

This paper is organized as follows. In Section \ref{sec:Section_1_Chap_4} we recall some basic facts about real quadric surfaces, as well as the description of the real automorphism groups of some of them, that we will need and use in Section \ref{sec:Section_2_Chap_4}. Section \ref{sec:Section_2_Chap_4} is devoted to the case-by-case proof of Theorem \ref{Theorem:Theorem_main_result}, with which we extend the analysis of \cite{rob16,yas22}.$\,$\\

\noindent \textbf{Acknowledgements}\\
I am very grateful to my PhD advisors Andrea Fanelli and Susanna Zimmermann for their guidance and constant support throughout this work. I would like to thank Frédéric Mangolte and Ivan Cheltsov for interesting discussions which gave me more insight on this work, as well as the Centre International de Rencontres Mathématiques (CIRM) and the Institut de Mathématiques de Marseille (I2M) for providing a good working environment during postdoc. This work was supported by the French National Centre for Scientific Research (CNRS), the University of Angers, the University Paris-Saclay and the ERC Saphidir.

\section{Preliminaries: A quick look at real quadric surfaces}
\label{sec:Section_1_Chap_4}

We start by defining the notion of conic bundle structure, which will appear several times throughout this article.

\begin{definition}[Conic bundle]
A real rational smooth surface $S$ admits a conic bundle structure if there exists a surjective morphism $ \pi : S \rightarrow \p^{1}_{\mathbb{R}} $ with connected fibres such that $-K_{S}$ is $\pi$-ample; the general geometric fibre of $\pi$ is isomorphic to $\p^{1}_{\mathbb{C}}$ and a geometric singular fibre of $\pi$ is the union of two secant $(-1)$-curves over $\mathbb{C}$.
\end{definition}

We denote by $Q_{r,s}$ the smooth quadric hypersurface $\lbrace [x_{1}:\dots:x_{r+s}] : x_{1}^{2}+\dots+x_{r}^{2}-x_{r+1}^{2}-\dots-x_{r+s}^{2}=0 \rbrace \subset \p^{r+s-1}_{\mathbb{R}}$, and for a real del Pezzo surface $S$ recall that we denote by $S(a,b)$ the blow-up of $S$ at $a$ real points and $b$ pairs of complex conjugate points. We shall mostly use $\p^{2}_{\mathbb{R}}$, $Q_{3,1}$ or $Q_{2,2}$ as $S$.\\
These last two, namely the quadric surfaces given by $ \lbrace x^2 + y^2 - z^2 - w^2 = 0 \rbrace $  and $ \lbrace x^2 + y^2 + z^2 - w^2 = 0 \rbrace $ in $ \mathbb{P}^{3}_{\mathbb{R}} $ are exactly the two non-isomorphic real rational models of $\p^{1}_{\mathbb{C}} \times \p^{1}_{\mathbb{C}}$. The first is $\mathbb{R}$-isomorphic to $ \mathbb{P}_{\mathbb{R}}^{1} \times \mathbb{P}_{\mathbb{R}}^{1} $ and the second is the $\mathbb{R}$-form given by $ \sigma : ([u_{0}:u_{1}],[v_{0}:v_{1}]) \mapsto ([\overline{v_{0}}:\overline{v_{1}}],[\overline{u_{0}}:\overline{u_{1}}]) $, where $ \langle z \mapsto \overline{z} \rangle = \text{Gal}(\mathbb{C} / \mathbb{R}) \cong \mathbb{Z}/2 \mathbb{Z}$. Indeed, forgetting the real structure given by $\sigma$ in the latter case, the surface $(Q_{3,1})_{\mathbb{C}}$ is isomorphic to $\p^{1}_{\mathbb{C}} \times \p^{1}_{\mathbb{C}}$ via the isomorphism $$\begin{array}{lrcl}
\varphi : & (Q_{3,1})_{\mathbb{C}} & \longrightarrow & \p^{1}_{\mathbb{C}} \times \p^{1}_{\mathbb{C}} \\
    & [x:y:w:z] & \longmapsto & ([x+iy:w-z],[x+iy:w+z]) \end{array},$$ whose inverse is given by $$\begin{array}{lrcl}
\varphi^{-1} : & \p^{1}_{\mathbb{C}} \times \p^{1}_{\mathbb{C}} & \longrightarrow & (Q_{3,1})_{\mathbb{C}} \\
    & ([u_{0}:u_{1}],[v_{0}:v_{1}]) & \longmapsto & [u_{0}v_{0}+u_{1}v_{1}:(-i)(u_{1}v_{1}-u_{0}v_{0}):u_{1}v_{0}+u_{0}v_{1}:u_{1}v_{0}-u_{0}v_{1}] \end{array}.$$

Let us recall the classical description of the automorphism group structure of $\p^{1}_{\mathbb{R}} \times \p^{1}_{\mathbb{R}}$, that is given by $$\Aut_{\mathbb{R}}(Q_{2,2}) \cong \Aut_{\mathbb{R}}(\p^{1}_{\mathbb{R}} \times \p^{1}_{\mathbb{R}}) \cong (\PGL_{2}(\mathbb{R}) \times \PGL_{2}(\mathbb{R})) \rtimes \langle \tau : (x,y) \mapsto (y,x) \rangle$$ (see for instance \cite[Theorem 2.2]{BEN16} for a nice proof of this result over $\mathbb{C}$), as well as the following description of the real automorphism group of $Q_{3,1}$, that we will use several times throughout this paper.

\begin{proposition}[{\cite[Proposition 4.1]{rob16}}]
The group $\Aut_{\mathbb{R}}(Q_{3,1})$ corresponds, via the isomorphism $\varphi$, to the subgroup of the group of complex automorphisms $\Aut(\p^{1}_{\mathbb{C}} \times \p^{1}_{\mathbb{C}})$ generated by $\tau : (x,y) \mapsto (y,x)$ and by $ \mathcal{F} = \lbrace (A,\overline{A}) \, \vert \, A \in \PGL_{2}(\mathbb{C}) \rbrace$. Moreover, $\Aut_{\mathbb{R}}(Q_{3,1}) \cong \mathcal{F} \rtimes \langle \tau \rangle$.
\label{prop:Proposition_description_Aut(Q31)} 
\end{proposition}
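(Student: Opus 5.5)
The plan is to use that a real automorphism of $Q_{3,1}$ is the same thing as a complex automorphism of $(Q_{3,1})_{\mathbb{C}}$ commuting with the antiholomorphic involution defining the real structure. Via $\varphi$, this becomes a statement about $\Aut(\p^{1}_{\mathbb{C}} \times \p^{1}_{\mathbb{C}}) \cong (\PGL_{2}(\mathbb{C}) \times \PGL_{2}(\mathbb{C})) \rtimes \langle \tau \rangle$. Namely, $\Aut_{\mathbb{R}}(Q_{3,1})$ corresponds to the centralizer of $\sigma$ in this group. I will then compute this centralizer explicitly.

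\textbf{Step 1.} Check that $\varphi$ conjugates complex conjugation $[x:y:w:z] \mapsto [\overline{x}:\overline{y}:\overline{w}:\overline{z}]$ on $(Q_{3,1})_{\mathbb{C}}$ to $\sigma$. The key identity is the factorization of the equation, $(x+iy)(x-iy)=(w-z)(w+z)$ on $Q_{3,1}$. It lets one rewrite each factor of $\varphi$ in two equivalent forms, for instance $[x+iy:w-z]=[w+z:x-iy]$. Conjugating the second factor of $\varphi(p)$ should then yield the first factor of $\varphi(\overline{p})$, and conversely. I expect this bookkeeping to be the only delicate point. If the coordinates come out as the inverse, i.e.\ $t \mapsto 1/t$ on one factor, then $\sigma$ agrees with the transported conjugation up to conjugation by a fixed element of $\PGL_{2}(\mathbb{C}) \times \PGL_{2}(\mathbb{C})$. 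Replacing $\varphi$ by its composite with that element changes nothing below, since $\mathcal{F}$ is stable under such a normalization, up to replacing $A$ by a fixed conjugate of it.

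\textbf{Step 2.} Compute the conjugates of the elements of $\Aut(\p^{1}_{\mathbb{C}} \times \p^{1}_{\mathbb{C}})$ by $\sigma$.
\begin{itemize}
\item For $g=(A,B)$ and $(x,y) \in \p^{1}_{\mathbb{C}} \times \p^{1}_{\mathbb{C}}$, applying $\sigma$, then $g$, then $\sigma$ gives $(x,y) \mapsto (\overline{y},\overline{x}) \mapsto (A\overline{y}, B\overline{x}) \mapsto (\overline{B}x, \overline{A}y)$. So $\sigma g \sigma = (\overline{B},\overline{A})$, and $g$ commutes with $\sigma$ if and only if $B=\overline{A}$.
\item One has $\tau\sigma(x,y) = (\overline{x},\overline{y}) = \sigma\tau(x,y)$, so $\tau$ commutes with $\sigma$.
\item Hence an element $(A,B)\tau$ commutes with $\sigma$ if and only if $(A,B)$ does.
\end{itemize}
The centralizer is therefore exactly $\mathcal{F} \cup \mathcal{F}\tau = \langle \mathcal{F}, \tau \rangle$.

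\textbf{Step 3.} Establish the semidirect product structure.
\begin{itemize}
\item $\mathcal{F}$ is a subgroup, because $A \mapsto (A,\overline{A})$ is a homomorphism.
\item $\mathcal{F}$ is normalized by $\tau$, since $\tau (A,\overline{A}) \tau = (\overline{A},A) \in \mathcal{F}$. It therefore has index $2$ in $\langle \mathcal{F}, \tau \rangle$.
\item $\mathcal{F} \cap \langle \tau \rangle = \{\id\}$, because elements of $\mathcal{F}$ preserve each ruling while $\tau$ exchanges them.
\end{itemize}
This gives $\Aut_{\mathbb{R}}(Q_{3,1}) \cong \mathcal{F} \rtimes \langle \tau \rangle$.
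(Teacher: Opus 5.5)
The paper gives no proof of this statement; it is quoted from \cite[Proposition 4.1]{rob16}. Your argument is correct and is the natural proof: use Galois descent, then compute the centralizer of the real structure in $(\PGL_{2}(\mathbb{C})\times\PGL_{2}(\mathbb{C}))\rtimes\langle\tau\rangle$. The identities $\sigma(A,B)\sigma=(\overline{B},\overline{A})$, $\tau\sigma=\sigma\tau$ and $\tau(A,\overline{A})\tau=(\overline{A},A)$ are all right, and together they give exactly $\mathcal{F}\cup\mathcal{F}\tau\cong\mathcal{F}\rtimes\langle\tau\rangle$.

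The one point to tighten is Step 1, which you left as an expectation. If you carry it out with the paper's formula for $\varphi$, complex conjugation is transported not to $\sigma$ but to $\sigma'(x,y)=(\iota\overline{y},\iota\overline{x})$, where $\iota([u_{0}:u_{1}])=[u_{1}:u_{0}]$. For example, the real point $[x:y:w:z]=[3:0:5:4]$ goes to $([3:1],[1:3])$, which $\sigma$ does not fix.

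Since $\sigma'=(\iota,\id)\,\sigma\,(\iota,\id)^{-1}$, your fallback does work. However, the justification ``$\mathcal{F}$ is stable under such a normalization'' is not accurate. The centralizer of $\sigma'$ is $\{(A,\iota\overline{A}\iota)\}\rtimes\langle\tau\rangle$, which is conjugate to $\mathcal{F}\rtimes\langle\tau\rangle$ but is a different subgroup (take $A=\mathrm{diag}(2,1)$).

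Consequently, the first sentence of the proposition holds verbatim only after replacing $\varphi$ by $(\iota,\id)\circ\varphi$, or more generally for any identification under which conjugation really becomes $\sigma$. That is the identification the paper uses afterwards, for instance when it treats $([1:0],[1:0])$ as a real point of $Q_{3,1}$. The abstract isomorphism $\Aut_{\mathbb{R}}(Q_{3,1})\cong\mathcal{F}\rtimes\langle\tau\rangle$ is unaffected.

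You can also do Step 1 without coordinates. Conjugation must exchange the two rulings, since $Q_{3,1}$ has no real lines. It is therefore transported to an antiholomorphic involution of the form $(x,y)\mapsto(P\overline{y},\overline{P}^{-1}\overline{x})$, and this equals $(P,\id)\,\sigma\,(P,\id)^{-1}$.
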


\section{Automorphisms of real rational quartic del Pezzo surfaces}
\label{sec:Section_2_Chap_4}

In this section, we will use extensively the structure of the Picard group to describe in a fairly natural manner the group of real automorphisms of each rational real form
of quartic del Pezzo surfaces, using the action of the Galois group on the conic bundle structures of the surface.

\subsection{Classical statement over the complex numbers}
\label{Sec:subsection_classical_statement_over_the_complex}

Let $X$ be a del Pezzo surface of degree $4$ defined over $\mathbb{R}$. Then $X_{\mathbb{C}}:=X \times_{\Spec(\mathbb{R})} \Spec(\mathbb{C})$ is isomorphic to the blow-up of $\p^{2}_{\mathbb{C}}$ in $5$ points in general position. Denote by $ \pi : X \rightarrow \mathbb{P}^{2}_{\mathbb{C}}$ a blow-down morphism. Up to change of coordinates in $\p^{2}_{\mathbb{C}}$, we can assume that the points are of the form: $p_{1} = [1:0:0], p_{2} = [0:1:0], p_{3} = [0:0:1], p_{4} = [1:1:1]$ and $p_{5} = [a:b:c]$, where $p_{5}$ is a point not aligned with any two of the other four points. Notice that the projective equivalence class of the set $\lbrace p_{1}, \dots, p_{5} \rbrace \subset \mathbb{P}^{2}_{\mathbb{C}}$ determine the isomorphism class of $X_{\mathbb{C}}$.

There are exactly $16$ exceptional curves on $X_{\mathbb{C}}$, given by: $E_{1} = \pi^{-1}(p_{1}), \dots, E_{5} = \pi^{-1}(p_{5})$, the five exceptional divisors above the blown-up points $p_{1}, \dots, p_{5}$; the ten strict transforms $D_{ij}$ of the lines $L_{ij}$ passing through $p_{i}, p_{j}$ for $1 \leq i,j \leq 5$, $i \neq j$; and the strict transform $C$ of the conic $\mathcal{C}$ passing through $p_{1}, \dots, p_{5}$ on $\mathbb{P}^{2}_{\mathbb{C}}$. We note that each of these $(-1)$-curves intersects five others. Each $E_{i}$ intersects $C$ and the $4$ strict transforms of lines through $p_{i}$; the divisor $C$ intersects $E_{1}, E_{2},\dots,E_{5}$; and each divisor $D_{ij}$ intersects $E_{i}$ and $E_{j}$, and the transforms of the three lines through two of the other three points.
 
\begin{definition}[{\hspace{1sp}\cite[Definition 8.1.1]{bla06}}]
A pair $\lbrace A,B \rbrace$ of divisors of $X$ is an \textit{exceptional pair} if $A$ and $B$ are divisors, $A^{2}=B^{2}=0$ and $(A+B)=-K_{X}$. This implies that $A \cdot B=2$ and $A \cdot K_{X} = B \cdot K_{X} = -2$.
\end{definition}

Denote by $L$ the pullback by $\pi$ of a general line on $\p^{2}_{\mathbb{C}}$. The Picard group of $X_{\mathbb{C}}$ is generated by the classes $E_{1}, E_{2}, E_{3}, E_{4}, E_{5}$ and $L$. There are five exceptional pairs of divisors on $X_{\mathbb{C}}$, namely $\lbrace L-E_{i} , 2L-\sum_{j \neq i}E_{j} \rbrace = \lbrace L-E_{i} , -K_{X_{\mathbb{C}}}-L+E_{i} \rbrace$, for $i = 1, \dots, 5$. Each corresponds geometrically to the two conic bundle structures given by the linear system of lines passing through one of the five blown up points, and by the linear system of conics passing through the other four points (see \cite[Lemma 8.1.2]{bla06}).\\ We can use the notion of exceptional pairs of divisors to describe the structure of the group $\Aut(X_{\mathbb{C}})$ in a very natural manner. The action of $\Aut(X_{\mathbb{C}})$ on the pairs of conic bundles gives rise to a homomorphism of groups $\rho_{1} : \Aut(X_{\mathbb{C}}) \longrightarrow \Sym_{5}$. Denoting its image and kernel respectively by $H_{X_{\mathbb{C}}}$ and $G_{X_{\mathbb{C}}}$, we have the exact sequence \begin{equation}
1 \rightarrow G_{X_{\mathbb{C}}} \rightarrow \Aut(X_{\mathbb{C}}) \overset{\rho_{1}}{\rightarrow} H_{X_{\mathbb{C}}} \rightarrow 1,
\label{eq:suite_exacte_Aut(X_C)}
\end{equation}
and the following result.

\begin{proposition}[{Structure of $\Aut(X_{\mathbb{C}})$, \cite[Proposition 8.1.3]{bla06}}]

\begin{enumerate}
\item The exact sequence \eqref{eq:suite_exacte_Aut(X_C)} splits.
\label{it:item_(1)_Proposition_structure_Aut(X_C)}

\item We can define an isomorphism of groups $ G_{X_{\mathbb{C}}} \overset{\gamma}{\longrightarrow} \lbrace (a_{1},a_{2},a_{3},a_{4},a_{5}) \in (\Z/2\Z)^{5} \, \vert \, \sum_{i=1}^{5} a_{i} = 0 \rbrace \cong (\Z/2\Z)^{4}$, where $\gamma(g)=(a_{1},\dots,a_{5})$ is such that $a_{i}=1$ if $g$ permutes $L-E_{i}$ with $-K_{X_{\mathbb{C}}}-L+E_{i}$, and $a_{i}=0$ if $g$ fixes the two divisors.
\label{it:item_(2)_Proposition_structure_Aut(X_C)}

\item We have $ H_{X_{\mathbb{C}}} \cong \lbrace h \in \PGL(3,\mathbb{C}) \, \vert \, h(\lbrace p_{1},p_{2},p_{3},p_{4},p_{5} \rbrace) = \lbrace p_{1},p_{2},p_{3},p_{4},p_{5} \rbrace \rbrace \subset \Sym_{5}$.
\label{it:item_(3)_Proposition_structure_Aut(X_C)}

\item $\Aut(X_{\mathbb{C}}) \cong G_{X_{\mathbb{C}}} \rtimes H_{X_{\mathbb{C}}}$, where the group $H_{X_{\mathbb{C}}} \subset \Sym_{5}$ acts on $G_{X_{\mathbb{C}}}$ by permuting the five coordinates of its elements.
\label{it:item_(4)_Proposition_structure_Aut(X_C)}
\end{enumerate}
\label{prop:Structure_Aut(X_C)}
\end{proposition}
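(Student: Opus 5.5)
The plan is to work throughout with the action of $\Aut(X_{\mathbb{C}})$ on $\Pic(X_{\mathbb{C}})$. An automorphism preserves $K_{X_{\mathbb{C}}}$ and the intersection form. Hence it permutes the $16$ $(-1)$-curves and the ten classes of self-intersection $0$ and degree $2$, namely $L-E_i$ and $-K_{X_{\mathbb{C}}}-L+E_i$. It therefore permutes the five exceptional pairs, which gives $\rho_1$.

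\textbf{Faithfulness.} First I will check that an automorphism acting trivially on $\Pic(X_{\mathbb{C}})$ is the identity. Such an automorphism fixes each $E_i$, so it descends to an element of $\PGL(3,\mathbb{C})$ fixing $p_1,\dots,p_5$. Since $p_1,\dots,p_4$ form a projective frame, this element is trivial.

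\textbf{Description of $G_{X_{\mathbb{C}}}$ (item 2).} The classes $L-E_1,\dots,L-E_5$ together with $K_{X_{\mathbb{C}}}$ span $\Pic(X_{\mathbb{C}})\otimes\mathbb{Q}$, since $2L = \sum_i (L-E_i) + K_{X_{\mathbb{C}}} + 3L - \sum E_i$ determines $L$ rationally. So an element of $G_{X_{\mathbb{C}}}$ is determined on $\Pic$ by which pairs it swaps. By faithfulness, this shows that $\gamma$ is an injective homomorphism into $(\Z/2\Z)^5$.

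\begin{itemize}
\item \emph{The sum condition.} I will show that swapping an odd number of pairs is not induced by a lattice isometry fixing $K_{X_{\mathbb{C}}}$. This is a short computation: swapping a single pair $i$ sends $L-E_i$ to $2L-\sum_{j\neq i}E_j$. Combined with fixing $L-E_j$ for $j\neq i$, this forces a non-integral image of $L$ (equivalently, it violates $\sum_i (L-E_i)\cdot(\cdot)$ parity). Composing with even swaps reduces every odd case to this one.
\item \emph{Surjectivity.} I will use the classical model of $X_{\mathbb{C}}$ as the intersection of two diagonal quadrics $\sum x_i^2=\sum \lambda_i x_i^2=0$ in $\p^4_{\mathbb{C}}$. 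The pencil contains exactly five singular quadrics, the cones $\sum_{j}(\lambda_j-\lambda_i)x_j^2=0$. The two rulings of the $i$-th cone cut out the $i$-th exceptional pair. The sign change $x_i\mapsto -x_i$ fixes every cone, and it swaps the rulings of every cone except the $i$-th. Hence its image under $\gamma$ is $(1,\dots,1,0,1,\dots,1)$, and these elements generate the even-sum subgroup.
\end{itemize}

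\textbf{Description of $H_{X_{\mathbb{C}}}$ and splitting (items 1, 3, 4).} Any projectivity $h$ preserving $\{p_1,\dots,p_5\}$ lifts to an automorphism $\tilde h$ of $X_{\mathbb{C}}$. This lift permutes the $E_i$, so it permutes the pairs by the same permutation. The subgroup $\tilde H$ of such lifts meets $G_{X_{\mathbb{C}}}$ trivially, because a projectivity fixing all five points is the identity.

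The key step, and the one I expect to be the main obstacle, is to show that $\tilde H$ surjects onto $H_{X_{\mathbb{C}}}$. Take $g\in\Aut(X_{\mathbb{C}})$. The group $G_{X_{\mathbb{C}}}\cong(\Z/2\Z)^4$ acts freely on the $16$ lines, since a nontrivial element fixing a line would fix its five neighbours and propagate. Therefore it acts simply transitively, so there is $k\in G_{X_{\mathbb{C}}}$ with $kg(C)=C$. The curves meeting $C$ are exactly $E_1,\dots,E_5$. Hence $kg$ preserves $\{E_1,\dots,E_5\}$ and descends to a projectivity preserving $\{p_1,\dots,p_5\}$, which shows $kg\in\tilde H$.

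This gives the splitting $\Aut(X_{\mathbb{C}})=G_{X_{\mathbb{C}}}\rtimes\tilde H$ and the identification in item 3. Finally, conjugating an element of $G_{X_{\mathbb{C}}}$ by $\tilde h$ changes which pairs are swapped by the permutation $\rho_1(\tilde h)$. This means $H_{X_{\mathbb{C}}}$ acts on $\gamma(G_{X_{\mathbb{C}}})$ by permuting coordinates, which is item 4.
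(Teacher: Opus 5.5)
Your argument is correct, but it reaches item (2) by a different route from the one the paper indicates. The paper does not prove the statement; it only cites \cite[Proposition 8.1.3]{bla06}. In Case 5, however, it recalls how the elements of $G_{X_{\mathbb{C}}}$ are produced there. The quadratic involution $[x:y:z]\dashmapsto[ayz:bxz:cxy]$ with base-points $p_1,p_2,p_3$, exchanging $p_4$ and $p_5$, has an explicit matrix on $\Pic(X_{\mathbb{C}})$ and realizes $(0,0,0,1,1)$. Permuting the roles of the points gives all vectors $e_a+e_b$, which generate the even-sum subgroup.

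You instead pass to the anticanonical model $\sum x_i^2=\sum\lambda_i x_i^2=0$ and realize the complementary vectors $(1,\dots,1,0,1,\dots,1)$ by sign changes. This is more conceptual: $G_{X_{\mathbb{C}}}$ appears as the group of diagonal sign changes acting linearly on $\p^4$. The cost is that it imports the classical dictionary between the five singular cones of the pencil and the five exceptional pairs. It is also poorly suited to the real forms treated afterwards, where the singular members of the pencil need not all be real; for $Q_{3,1}(0,2)$, complex conjugation exchanges $\mathcal{R}_4$ and $\mathcal{R}_5$. The quadratic-involution route stays in the blow-up model, and it is what the paper transports to $\p^2_{\mathbb{R}}(5,0)$ and, via birational involutions of the quadric, to the other cases.

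Your splitting argument is a clean way to produce the complement. You correct $g$ by the unique $k\in G_{X_{\mathbb{C}}}$ with $kg(C)=C$. Then $kg$ permutes $E_1,\dots,E_5$, hence fixes $L=\frac13(\sum E_i-K_{X_{\mathbb{C}}})$.

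A few small repairs are needed:
\begin{itemize}
\item The relation you want is $2L=\sum_i(L-E_i)+K_{X_{\mathbb{C}}}$. The expression you wrote actually equals $5L-\sum_i E_i$.
\item The free action on the sixteen lines needs its reason. Suppose $g\in G_{X_{\mathbb{C}}}$ fixes a $(-1)$-curve $\ell$. For each pair $\{D,D'\}$ one has $D\cdot\ell+D'\cdot\ell=-K_{X_{\mathbb{C}}}\cdot\ell=1$ with $D,D'$ nef. So exactly one member has intersection $0$ with $\ell$, and $g$ cannot swap the pair. Thus $\gamma(g)=0$, and $g=\id$ by injectivity; no propagation is needed.
\item For the parity condition you need not invoke already-realized even swaps. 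An isometry fixing $K_{X_{\mathbb{C}}}$ and swapping exactly the pairs indexed by $S$ sends $L$ to $L-\frac{|S|}{2}K_{X_{\mathbb{C}}}-\sum_{i\in S}(L-E_i)$. Since $K_{X_{\mathbb{C}}}$ is primitive, this is integral only if $|S|$ is even.
\end{itemize}
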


\begin{remark}
The linear system $\vert -K_{X_{\mathbb{C}}} \vert$ embeds $X_{\mathbb{C}}$ into $\p^{4}_{\mathbb{C}}$ as a complete intersection of two quadrics, and by a linear change of coordinates, the two quadrics can be chosen to be $ \sum_{i=0}^{4} x_{i}^{2} = 0$ and $ \sum_{i=0}^{4} \lambda_{i} x_{i}^{2} = 0$ (\hspace{1sp}\cite[§4.4]{BEA07}).
\end{remark}

One can see \cite[Section 6]{di09a} that $H_{X_{\mathbb{C}}}$ is isomorphic to one of the following groups:\begin{equation} \{\id\} \ , \ \Z/2\Z \ , \ \Z/3\Z \ , \ \Z/4\Z \ , \ \Z/5\Z \ , \ \Sym_{3} \ , \ \D_{5}.
\label{list_of_possible_group_for_Im_Aut(X_C)} 
\end{equation}
Denote by $\rho$ the restriction of $\rho_{1}$ on the real automorphism group $\Aut_{\mathbb{R}}(X)$, and set $$A_{0} = \text{Ker}(\rho) = G_{X_{\mathbb{C}}} \cap \Aut_{\mathbb{R}}(X), \; A' = \text{Im}(\rho).$$ We will give a description of the families of real automorphisms of $X$, as well as a geometric description of the elements of the groups $A_{0}$ and $A'$ in Section \ref{Sec:subsection_2.3_real_automorphism_groups} below.

\subsection{Real automorphism groups: a geometric description of the families of automorphisms}
\label{Sec:subsection_2.3_real_automorphism_groups}

We proceed by studying each rational real form of quartics del Pezzo $Q_{r,s}(a,b)$, where $(r,s)$ is the signature of a corresponding quadratic form.
We denote by $\sigma$ the antiholomorphic involution $\sigma : (x,y) \mapsto (\overline{y},\overline{x})$ corresponding to $Q_{3,1}$.
We recall that the automorphism group of the quadric $Q_{3,1}$ is given by $\Aut_{\mathbb{R}}(Q_{3,1}) \cong \lbrace (A,\overline{A}) \, \vert \, A \in \PGL_{2}(\mathbb{C}) \rbrace \rtimes \langle \tau : (x,y) \mapsto (y,x) \rangle$ (see Proposition \ref{prop:Proposition_description_Aut(Q31)}).

\subsubsection{The del Pezzo surfaces obtained by blowing up $Q_{3,1}$}
\label{Sec:subsubsection_dP_surf_obtained_by_blowing_up_Q31}

\subsubsection{Case 1: $X \cong Q_{3,1}(0,2)$} Let $X \cong Q_{3,1}(0,2)$ be the blow-up of $Q_{3,1}$ at two pairs of complex conjugate points $(p, \overline{p}), (q, \overline{q})$.

We have sixteen $(-1)$-curves on $X_{\mathbb{C}}$: the exceptional divisors above the blown up points denoted by $E_{p}, E_{\overline{p}}, E_{q}, E_{\overline{q}}$; the strict transforms of the fibres $f, \overline{f}$ passing through one of the four points that we denote by $f_{p}, f_{\overline{p}}, f_{q}, f_{\overline{q}}, \overline{f_{p}}, \overline{f_{\overline{p}}}, \overline{f_{q}}, \overline{f_{\overline{q}}}$; and the strict transforms  of the curves of bidegree $(1,1)$ of $(Q_{3,1})_{\mathbb{C}}$ (linearly equivalent to $f+\overline{f}$) passing through three of the four points and denoted by $ f_{p\overline{p}q}, f_{p\overline{p}\overline{q}}, f_{pq\overline{q}}, f_{\overline{p}q\overline{q}}$.

These $(-1)$-curves form the singular fibres of the ten conic bundle structures on $X_{\mathbb{C}}$ with four singular (complex) fibres each. These conic bundles are given by the linear systems associated to the following classes of divisors:\\
\begin{tabular}{p{6cm}p{7cm}}
{\begin{align}
f+\overline{f}-E_{p}-E_{\overline{p}}  \\  f+\overline{f}-E_{p}-E_{q}  \\  f+\overline{f}-E_{p}-E_{\overline{q}}  \\  f+\overline{f}-E_{q}-E_{\overline{q}}  \\  f+\overline{f}-E_{\overline{p}}-E_{\overline{q}}   
\end{align}}
&%
{\begin{align}
f+\overline{f}-E_{\overline{p}}-E_{q}  \\  f \\  \overline{f}  \\  f+2\overline{f}-E_{p}-E_{\overline{p}}-E_{q}-E_{\overline{q}}  \\  2f+\overline{f}-E_{p}-E_{\overline{p}}-E_{q}-E_{\overline{q}}
\end{align}}
\end{tabular}

The anticanonical divisor class of $X$ is $-K_{X} = 2f+2\overline{f}-E_{p}-E_{\overline{p}}-E_{q}-E_{\overline{q}}$. We now collect these conic bundles in pairs $\lbrace \mathcal{C}_{i} , \mathcal{C}'_{i} \rbrace$, such that $ \mathcal{C}_{i} + \mathcal{C}'_{i} = -K_{X}$ for $i=1,\dots,5$. One has
\begin{align*}
\mathcal{R}_{1} &:= \lbrace \mathcal{C}_{1} , \mathcal{C}'_{1} \rbrace = \lbrace f+\overline{f}-E_{p}-E_{\overline{p}} , f+\overline{f}-E_{q}-E_{\overline{q}} \rbrace,\\
\mathcal{R}_{2} &:= \lbrace \mathcal{C}_{2} , \mathcal{C}'_{2} \rbrace = \lbrace f+\overline{f}-E_{p}-E_{q} , f+\overline{f}-E_{\overline{p}}-E_{\overline{q}} \rbrace,\\
\mathcal{R}_{3} &:= \lbrace \mathcal{C}_{3} , \mathcal{C}'_{3} \rbrace = \lbrace f+\overline{f}-E_{p}-E_{\overline{q}} , f+\overline{f}-E_{\overline{p}}-E_{q} \rbrace,\\
\mathcal{R}_{4} &:= \lbrace \mathcal{C}_{4} , \mathcal{C}'_{4} \rbrace = \lbrace f , f+2\overline{f}-E_{p}-E_{\overline{p}}-E_{q}-E_{\overline{q}} \rbrace,\\
\mathcal{R}_{5} &:= \lbrace \mathcal{C}_{5} , \mathcal{C}'_{5} \rbrace = \lbrace \overline{f} , 2f+\overline{f}-E_{p}-E_{\overline{p}}-E_{q}-E_{\overline{q}} \rbrace.
\end{align*}

We represent with arrows in Figure \ref{Fig:Figure_1_Galois_action_on_conic_bundles_Q31(0,2)} below the way the anti-holomorphic involution $\sigma$ acts on the five pairs of conic bundles.

\begin{figure}[h]
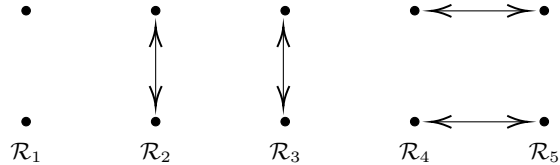

\centering

\caption{Representation of the five pairs of conic bundles and the action of $\sigma$ on them.}
\label{Fig:Figure_1_Galois_action_on_conic_bundles_Q31(0,2)}
\end{figure}

\begin{remark}
As a consequence of the action of the antiholomorphic involution $\sigma$ (Figure \ref{Fig:Figure_1_Galois_action_on_conic_bundles_Q31(0,2)}), we see that $A':=\text{Im}(\rho) \subset \lbrace \id , (23) , (45) , (23)(45) \rbrace$ where the inclusion is strict as one can see from the list \eqref{list_of_possible_group_for_Im_Aut(X_C)}, and that any element $(a_{1},a_{2},a_{3},a_{4},a_{5}) \in \text{Ker}(\rho)=:A_{0}$ has $a_{4}=a_{5}$, so $A_{0}$ embeds into $(\Z/2\Z)^{3}$.
\label{rmk:notice1}
\end{remark}

We recall the following results, obtained in \cite[§4.3]{rob16}.

\begin{lemma}[{\hspace{1sp}\cite[Lemma 4.8]{rob16}}]
Let $p, q \in \p^{1}_{\mathbb{C}} \times \p^{1}_{\mathbb{C}} \cong (Q_{3,1})_{\mathbb{C}}$ be two distinct non conjugate points such that the blow-up of $p, \overline{p}, q, \overline{q}$ is a del Pezzo surface. Then up to automorphisms of $Q_{3,1}$, the points $p$ and $q$ can be chosen to be $p=([1:0],[0:1])$ and $q=([1:1],[1:\mu])$ for some $\mu \in \mathbb{C}\setminus\lbrace 0,\pm1 \rbrace$, respectively.
\label{lem:Lemma_choice_of_points_Q31(0,2)_Rob}
\end{lemma}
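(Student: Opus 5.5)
We normalize the four points with $\Aut_{\mathbb{R}}(Q_{3,1})$, described in Proposition \ref{prop:Proposition_description_Aut(Q31)} as the transformations $(x,y)\mapsto(Ax,\overline{A}y)$ together with $\tau$. Under this description the real structure is $\sigma(x,y)=(\overline{y},\overline{x})$. Write $p=(p_1,p_2)$; then $\overline{p}:=\sigma(p)=(\overline{p_2},\overline{p_1})$. The del Pezzo condition says that no two of the four points lie on a common fibre of either ruling. In particular $p$ and $\overline{p}$ are not on a common fibre, so $p_1\neq\overline{p_2}$.

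First we normalize $p$. Since $\PGL_2(\mathbb{C})$ acts transitively on ordered pairs of distinct points of $\p^1_{\mathbb{C}}$, we choose $A$ with $A p_1=[1:0]$ and $A\overline{p_2}=[0:1]$. Then $\overline{A}p_2=\overline{A\overline{p_2}}=[0:1]$. So $(A,\overline{A})$ sends $p$ to $([1:0],[0:1])$, and consequently $\overline{p}$ goes to $([1:0],[0:1])$ with the coordinates swapped and conjugated, i.e. $([1:0],[0:1])\mapsto([0:1],[1:0])$ after applying $\sigma$.

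Next we compute the stabilizer of this configuration. The elements $(A,\overline{A})$ fixing $p$ are those with $A$ fixing both $[1:0]$ and $[0:1]$, that is, $A=\mathrm{diag}(1,\lambda)$ with $\lambda\in\mathbb{C}^*$. Write $q=([a:b],[c:d])$. The general position condition forbids $q$ from sharing a fibre with $p$ or $\overline{p}$. Hence the first coordinate of $q$ is neither $[1:0]$ nor $[0:1]$, so $a,b\neq 0$; likewise $c,d\neq 0$. Rescaling, $q=([1:s],[1:t])$ with $s,t\in\mathbb{C}^*$. The element $\mathrm{diag}(1,\lambda)$ sends $q$ to $([1:\lambda s],[1:\overline{\lambda}t])$, and taking $\lambda=s^{-1}$ gives $q=([1:1],[1:\mu])$ with $\mu=t/\overline{s}\in\mathbb{C}^*$.

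Finally we exclude the remaining values of $\mu$. The conjugate point is $\overline{q}=([1:\overline{\mu}],[1:1])$. The points $q$ and $\overline{q}$ must not share a fibre, so $\overline{\mu}\neq 1$ and $\mu\neq 1$. The one genuinely non-routine step is excluding $\mu=-1$. Fibre conditions give nothing here, so we must use the other $(-1)$-curve condition: no bidegree $(1,1)$ curve passes through all four points, equivalently no $(-2)$-curve $f+\overline{f}-E_p-E_{\overline{p}}-E_q-E_{\overline{q}}$ exists. The $(1,1)$ curves through $([1:0],[0:1])$ and $([0:1],[1:0])$ are exactly $\alpha x_0y_0+\beta x_1y_1=0$. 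Substituting $q$ gives $\alpha+\beta\mu=0$, and substituting $\overline{q}$ gives $\alpha+\beta\overline{\mu}=0$. A nonzero solution $(\alpha,\beta)$ exists if and only if $\mu=\overline{\mu}$, i.e. $\mu\in\mathbb{R}$.

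This contradicts the statement as given, which excludes only $\mu=\pm1$. So this step is where the plan might fail: I would recheck it carefully, either to confirm that the genuine condition is $\mu\notin\mathbb{R}$ or to find a sign or conjugation error in my parametrization. Whatever the outcome, the excluded values are exactly the solutions of this linear condition together with the fibre conditions $\mu\neq 0,1$, so the final list of exclusions will come out of this computation rather than from the statement.
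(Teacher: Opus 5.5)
Your computations are correct. The route is exactly the one the paper uses for its own normalization lemmas in the cases $Q_{3,1}(2,1)$ and $Q_{3,1}(4,0)$:
\begin{itemize}
\item move $p$ to $([1:0],[0:1])$ by some $(A,\overline{A})$;
\item use the residual diagonal stabilizer $\mathrm{diag}(1,\lambda)$ to put $q=([1:1],[1:\mu])$ with $\mu=t/\overline{s}\in\mathbb{C}^{*}$;
\item discard degenerate values of $\mu$ using the fibre and bidegree $(1,1)$ conditions.
\end{itemize}
For this particular lemma the paper gives no proof of its own; it simply quotes Robayo's Lemma 4.8.

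The only real problem is your last paragraph. There is no contradiction, and you should not leave the proof open there. The lemma is an existence statement. It asserts that after an automorphism of $Q_{3,1}$ one can reach $p=([1:0],[0:1])$ and $q=([1:1],[1:\mu])$ with \emph{some} $\mu\in\mathbb{C}\setminus\{0,\pm1\}$. It does not claim that every such $\mu$ gives a del Pezzo surface. You have shown that the normalized $\mu$ satisfies $\mu\notin\mathbb{R}$, which already subsumes $\mu\neq 0,1$. Since $\mathbb{C}\setminus\mathbb{R}\subset\mathbb{C}\setminus\{0,\pm1\}$, the lemma follows at once, and you have in fact proved the sharper version with $\mu\in\mathbb{C}\setminus\mathbb{R}$.

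Your $(1,1)$-curve computation is not a sign or conjugation error. For instance, for $\mu=2$ the curve $\{x_{1}y_{1}=2x_{0}y_{0}\}$ passes through $p,\overline{p},q,\overline{q}$. So the excluded set $\{0,\pm1\}$ in the statement is simply not sharp. The same thing happens in the paper's normalization lemma for $Q_{3,1}(2,1)$: there the $(1,1)$-curve through the four points exists exactly when $|\mu|=1$, and $\mu=-1$ is only one instance of this. Replace your final paragraph by this one-line conclusion.

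A minor point: your sentence about where $\overline{p}$ goes is garbled. Just say that, since $(A,\overline{A})$ commutes with $\sigma$, the point $\overline{p}$ is sent to $\sigma([1:0],[0:1])=([0:1],[1:0])$.
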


\begin{proposition}[{\hspace{1sp}\cite[Propositions 4.9 and 4.10]{rob16}}] 
Let $X \cong Q_{3,1}(0,2)$ be a real del Pezzo surface of degree $4$.
\begin{enumerate}
\item The kernel $A_{0}$ of $\rho : \Aut_{\mathbb{R}}(X) \rightarrow
\Sym_{5}$ is isomorphic to $(\Z/2\Z)^{3}$, and is generated by the elements $\gamma_{1} = (0,1,1,0,0)$, $\gamma_{2} = (1,0,1,0,0)$ and $\gamma_{3} = (0,0,0,1,1)$.
\label{it:item_(1)_Proposition_Aut(X)_Q31(0,2)}
\item The image $A'$ of $\rho$ is either $\langle (23)(45) \rangle$ or trivial. Moreover, the former happens if and only if $ \vert\mu\vert=1$.
\item The equation of the surface $X$ can be given by the intersection of the following two quadrics in $\p^{4}_{x_{1}x_{2}x_{3}x_{4}x_{5}}$
\begin{align*}
Q_{1} &: (\mu-\mu\overline{\mu}+\overline{\mu})x_{1}^{2}-2x_{1}x_{2}+x_{2}^{2}+(1-\overline{\mu}+\mu\overline{\mu}-\mu)x_{3}^{2}+x_{4}^{2}=0,\\
Q_{2} &: \mu\overline{\mu}x_{1}^{2}-2\mu\overline{\mu}x_{1}x_{2}+(\mu-1+\overline{\mu})x_{2}^{2}+\mu\overline{\mu}x_{4}^{2}+(1-\overline{\mu}+\mu\overline{\mu}-\mu)x_{5}^{2}=0.
\end{align*}
\end{enumerate}
\label{Prop:Proposition_Aut(X)_Q31(0,2)}
\end{proposition}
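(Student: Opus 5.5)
We work throughout in the normal form of Lemma \ref{lem:Lemma_choice_of_points_Q31(0,2)_Rob}: $p=([1:0],[0:1])$ and $q=([1:1],[1:\mu])$. Then $\overline{p}=\sigma(p)=([1:0],[0:1])$ with the coordinates swapped, i.e. $\overline{p}=([0:1],[1:0])$, and $\overline{q}=([1:\overline{\mu}],[1:1])$. We then produce explicit elements of $\Aut_{\mathbb{R}}(X)$ realising $\gamma_{1},\gamma_{2},\gamma_{3}$, using Remark \ref{rmk:notice1} for the upper bounds.

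\textbf{The kernel $A_{0}$.} By Remark \ref{rmk:notice1}, $A_{0}$ embeds into $\{a\in(\Z/2\Z)^{5} : \sum a_{i}=0,\ a_{4}=a_{5}\}\cong(\Z/2\Z)^{3}$, and this group is spanned by $\gamma_{1},\gamma_{2},\gamma_{3}$. It therefore suffices to realise each $\gamma_{i}$ by a real automorphism, and we do so by lifting (birational) self-maps of $Q_{3,1}$.

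For $\gamma_{3}=(0,0,0,1,1)$: an element of $\Aut(X_{\mathbb{C}})$ swapping $f$ with $f+2\overline{f}-\sum E$ is induced by a birational self-map of $\p^{1}\times\p^{1}$ of bidegree type $(1,2)$ that is regular on $X$, i.e. an elementary-link composition centred at the four points. We would write it explicitly as the map $(x,y)\mapsto(x,\,y')$ where $y'$ is defined by the conic-bundle pencil, and then check that it commutes with $\sigma$.

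For $\gamma_{1}$ and $\gamma_{2}$: the element $(0,1,1,0,0)$ swaps the classes $f+\overline{f}-E_{p}-E_{q}\leftrightarrow f+\overline{f}-E_{\overline{p}}-E_{\overline{q}}$ and $f+\overline{f}-E_{p}-E_{\overline{q}}\leftrightarrow f+\overline{f}-E_{\overline{p}}-E_{q}$, while preserving $f$ and $\overline{f}$. This forces the induced map to act on Pic by $E_{p}\leftrightarrow E_{\overline{p}}$, $E_{q}\leftrightarrow E_{\overline{q}}$, so it comes from an automorphism of $Q_{3,1}$ of the form $(A,\overline{A})$ exchanging $p\leftrightarrow\overline{p}$ and $q\leftrightarrow\overline{q}$. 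Concretely, we need $A\in\PGL_{2}(\mathbb{C})$ with $A[1:0]=[0:1]$, $A[0:1]=[1:0]$, $A[1:1]=[1:\overline{\mu}]$ and $\overline{A}[1:\mu]=[1:1]$. This gives $A=\mathrm{diag}(1,\overline{\mu})$ composed with the swap of coordinates, and the two conditions are consistent. Similarly, $\gamma_{2}=(1,0,1,0,0)$ requires $E_{p}\leftrightarrow E_{\overline{q}}$, $E_{\overline{p}}\leftrightarrow E_{q}$ or an analogous pattern, hence some $(A,\overline{A})$ mapping $p\mapsto\overline{q}$, etc. Solving the cross-ratio constraints, we expect these to always be solvable via the triple transitivity of $\PGL_{2}$; if one of them is not, we compose with the $\gamma_{3}$-type map. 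Composing the two maps then yields $\gamma_{1}+\gamma_{2}$.

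\textbf{The image $A'$.} By Remark \ref{rmk:notice1}, $A'\subset\{\id,(23),(45),(23)(45)\}$, and $A'$ is cyclic of order at most $2$ by the list \eqref{list_of_possible_group_for_Im_Aut(X_C)}. We would first exclude $(23)$ and $(45)$ separately. Either of them would have to commute with $\sigma$, but $\sigma$ itself acts as $(45)$ composed with the swaps inside $\mathcal{R}_{2},\mathcal{R}_{3}$ (Figure \ref{Fig:Figure_1_Galois_action_on_conic_bundles_Q31(0,2)}). A lift of $(45)$ alone would have to exchange $f$ and $\overline{f}$ while fixing each $\mathcal{R}_{2},\mathcal{R}_{3}$, and we would show that this forces a Pic action that is incompatible with an intersection-form-preserving real map, or that it reduces to an element of $\PGL_{3}$ fixing the five points, contradicting the list.

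For $(23)(45)$: up to the kernel, such an element is represented by $\tau\circ(A,\overline{A})$, which swaps the two rulings and maps $\{p,\overline{p},q,\overline{q}\}$ to itself with $p$ fixed or swapped appropriately. We would write the conditions and find that they reduce to $\mu\overline{\mu}=1$, since $\tau$ maps $q=([1:1],[1:\mu])$ to $([1:\mu],[1:1])$ and we need this to equal $\overline{q}$ up to a diagonal $A$ fixing $p$ and $\overline{p}$, i.e. $A=\mathrm{diag}(1,\lambda)$ with $\lambda=\overline{\lambda}^{-1}$ etc.; this yields $\vert\mu\vert=1$.

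\textbf{The equations.} Finally, the equations come from computing a $\sigma$-invariant basis of $H^{0}(-K_{X})$, i.e. the bidegree $(2,2)$ forms vanishing at the four points, and eliminating to obtain two diagonalisable quadrics.

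The main obstacle is the case analysis for $A'$: proving that only $\vert\mu\vert=1$ admits a lift requires carefully solving the $\PGL_{2}$ conditions in every permutation pattern of the four points.
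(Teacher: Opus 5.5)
Your upper bound for $A_0$ and your realisation of $\gamma_1$ by $(A,\overline A)$ with $A\colon[u_0:u_1]\mapsto[u_1:\overline\mu u_0]$ match the paper (its $\alpha_1$).

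\textbf{Point (1).} The genuine gap is $\gamma_3=(0,0,0,1,1)$. A map of the shape $(x,y)\dashmapsto(x,y')$ preserves one of the two rulings, so its lift fixes $f$ or $\overline f$ in $\Pic(X)$. But $\gamma_3$ must move both: with $\Sigma:=E_p+E_{\overline p}+E_q+E_{\overline q}$, it sends $f\mapsto f+2\overline f-\Sigma$ and $\overline f\mapsto 2f+\overline f-\Sigma$. So no map of that form can realise it, and you also leave its reality unchecked. The paper instead exhibits an explicit real birational involution $\phi_3$ with base points $p,\overline p,q,\overline q$, whose components have bidegrees $(1,2)$ and $(2,1)$, and computes its matrix on $\Pic(X)$.

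For $\gamma_2$ there are two smaller problems. The pattern you name ($p\leftrightarrow\overline q$, $\overline p\leftrightarrow q$) realises $\gamma_1+\gamma_2$, not $\gamma_2$; this is harmless for generation, and the paper's $\alpha_2$ uses $p\leftrightarrow q$, $\overline p\leftrightarrow\overline q$. Its existence does not follow from triple transitivity, since there are four point conditions. It holds because a double transposition of four points preserves their cross-ratio. Composing with a $\gamma_3$-map could not produce a missing $\gamma_2$.

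In fact all of (1) follows without explicit maps. By point (2) of Proposition \ref{prop:Structure_Aut(X_C)}, $\gamma$ is an isomorphism on $G_{X_{\mathbb C}}$. Moreover $\gamma(\sigma g\sigma)$ is $\gamma(g)$ with its coordinates permuted by $(45)$. Hence each even vector with $a_4=a_5$ equals $\gamma(g)$ for a unique $g$, which then satisfies $\sigma g\sigma=g$, i.e. $g$ is real.

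\textbf{Point (2).} The exclusion of $(23)$ and $(45)$ is only announced, and ``contradicting the list'' is not an argument, since $\Z/2\Z$ occurs in the list. What you need is that $H_{X_{\mathbb C}}$ contains no transposition. A projectivity fixing $p_1,p_2,p_3$ is diagonal, and if it swaps $p_4=[1:1:1]$ with $p_5$ then $p_5=[1:\pm1:\pm1]$, which makes three of the points collinear.

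For $(23)(45)$, your reduction to $\tau\circ(A,\overline A)$ needs justification. Compose with the real $\gamma_3$-automorphism so that $g(f)=\overline f$; then $g\sigma=\sigma g$ forces $g(\overline f)=f$, so $g$ permutes the $E$'s and descends to $Q_{3,1}$. Your computation also imposes the wrong condition. With $A=\mathrm{diag}(1,\lambda)$ the map swaps $p$ and $\overline p$, so to induce $(23)(45)$ it must fix $q$ and $\overline q$. That gives $\lambda=\mu$ and $\mu\overline\mu=1$. Requiring $q\mapsto\overline q$ instead gives the permutation $(p\overline p)(q\overline q)$, which induces $(45)$ and forces $\mu\in\mathbb R$. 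In that case all four points lie on the curve $u_1v_1=\mu u_0v_0$.

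\textbf{Point (3).} This remains a sketch. Note that the paper itself proves only (1) and refers to \cite{rob16} for (2) and (3).
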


We give an alternative proof of the existence of automorphisms of $X$ corresponding to $\gamma_{1}, \gamma_{2}, \gamma_{3}$ (see point (\ref{it:item_(1)_Proposition_Aut(X)_Q31(0,2)})).

\begin{proof}
As already noticed in Remark \ref{rmk:notice1}, the action of $\sigma$ on the pairs $\mathcal{R}_{4}$ and $\mathcal{R}_{5}$ implies that an automorphism of $X$ which is in the kernel of $\rho$ corresponds to either an element of the form $(\ast,\ast,\ast,0,0)$ or $(*,*,*,1,1)$ in $(\Z/2\Z)^{5}$, which is the same as satisfying the condition $a_{4}+a_{5}=0$. Hence, $A_{0}$ is contained in the group $\lbrace (a_{1},\dots,a_{5}) \in (\Z/2\Z)^{5} \, \vert \, a_{1}+a_{2}+a_{3}=0 \,\, \text{and} \,\, a_{4}+a_{5}=0 \rbrace \cong (\Z/2\Z)^{3}$.

Consider the real automorphism $\alpha_{1} : ([u_{0}:u_{1}],[v_{0}:v_{1}]) \mapsto ([u_{1}:\overline{\mu} u_{0}],[v_{1}:\mu v_{0}])$ of $Q_{3,1}$ which preserves the fibrations $f$, $\overline{f}$ on $(Q_{3,1})_{\mathbb{C}} \cong \p^{1}_{\mathbb{C}} \times \p^{1}_{\mathbb{C}}$, and which interchanges both $p$ with $\overline{p}$ and $q$ with $\overline{q}$. By blowing up $p, \overline{p}, q, \overline{q}$ it lifts to an automorphism of $X$ defined over $\mathbb{R}$ whose action on the Picard group of $X$ with respect to the basis $\lbrace f,\overline{f},E_{p},E_{\overline{p}},E_{q},E_{\overline{q}} \rbrace$ is given by the matrix \[ \small \begin{pmatrix} 1 & 0 & 0 & 0 & 0 & 0 \\ 0 & 1 & 0 & 0 & 0 & 0 \\ 0 & 0 & 0 & 1 & 0 & 0 \\ 0 & 0 & 1 & 0 & 0 & 0 \\ 0 & 0 & 0 & 0 & 0 & 1 \\ 0 & 0 & 0 & 0 & 1 & 0 \end{pmatrix}. \] This automorphism is then in the kernel of $\rho$ and its induced action on the pairs $\mathcal{R}_{i}$'s is given by $\langle \gamma_{1}=(0,1,1,0,0) \rangle$.
The same way, consider the real automorphism $\alpha_{2} : ([u_{0}:u_{1}],[v_{0}:v_{1}]) \mapsto ([u_{1}-\overline{\mu} u_{0}:\overline{\mu}(u_{1}- u_{0})],[v_{1}-\mu v_{0}:\mu(v_{1}-v_{0}])$ of $Q_{3,1}$ which interchanges $p$ with $q$ and $\overline{p}$ with $\overline{q}$, and which preserves the two fibrations $f$, $\overline{f}$. It lifts to an automorphism of $X$ defined over $\mathbb{R}$ contained in the kernel of $\rho$ and whose action on the pairs of conic bundle structures is given by $\langle \gamma_{2}=(1,0,1,0,0) \rangle$. Finally, consider the real birational involution $\phi_{3} : ([u_{0}:u_{1}],[v_{0}:v_{1}]) \dashmapsto ([A_{0}(u_{0},u_{1},v_{0},v_{1}):A_{1}(u_{0},u_{1},v_{0},v_{1})],[B_{0}(u_{0},u_{1},v_{0},v_{1}):B_{1}(u_{0},u_{1},v_{0},v_{1})])$ of $Q_{3,1}$, where
\begin{align*}
A_{0}(u_{0},u_{1},v_{0},v_{1}) &= -v_{1}(\overline{\mu}(\mu-1)u_{0}v_{0} + (1-\overline{\mu})u_{1}v_{1} + (\overline{\mu}-\mu)u_{1}v_{0}),\\
A_{1}(u_{0},u_{1},v_{0},v_{1}) &= -v_{0} \overline{\mu}(\mu(\overline{\mu}-1)u_{0}v_{0} + (\mu-\overline{\mu})u_{0}v_{1} + (1-\mu)u_{1}v_{1}),\\
B_{0}(u_{0},u_{1},v_{0},v_{1}) &= -u_{1}(\mu(\overline{\mu}-1)u_{0}v_{0} + (\mu-\overline{\mu})u_{0}v_{1} + (1-\mu)u_{1}v_{1}),\\
B_{1}(u_{0},u_{1},v_{0},v_{1}) &= -u_{0}\mu(\overline{\mu}(\mu-1)u_{0}v_{0} + (1-\overline{\mu})u_{1}v_{1} + (\overline{\mu}-\mu)u_{1}v_{0}),
\end{align*}
and whose base-points are $p$, $\overline{p}$, $q$ and $\overline{q}$. By blowing up $p, \overline{p}, q, \overline{q}$ it lifts to a biregular morphism of $X$, defined over $\mathbb{R}$, whose action on the Picard group of $X$ is given by the matrix \[ \small \begin{pmatrix} 1 & 2 & 1 & 1 & 1 & 1 \\ 2 & 1 & 1 & 1 & 1 & 1 \\ -1 & -1 & 0 & -1 & -1 & -1 \\ -1 & -1 & -1 & 0 & -1 & -1 \\ -1 & -1 & -1 & -1 & 0 & -1 \\ -1 & -1 & -1 & -1 & -1 & 0 \end{pmatrix}. \] It follows that this automorphism is in the kernel of $\rho$ and its action on the pairs $\mathcal{R}_{i}$'s is given by $\langle \gamma_{3}=(0,0,0,1,1) \rangle$. This yields the claim of point \ref{it:item_(1)_Proposition_Aut(X)_Q31(0,2)}.
\end{proof}

\begin{remark} From the description of families of automorphisms in $\Aut_{\mathbb{R}}(X)$ given in Proposition \ref{Prop:Proposition_Aut(X)_Q31(0,2)} one can see that for the general quartic del Pezzo surfaces $X$ of type $Q_{3,1}(0,2)$, the action of $\Gal(\mathbb{C}/\mathbb{R})$ on $\Pic(X)$ (see Figure \ref{Fig:Figure_1_Galois_action_on_conic_bundles_Q31(0,2)}) is not realized by an automorphism of the surface. 
\end{remark}

\subsubsection{Case 2: $X \cong Q_{3,1}(2,1)$} Now let $X \cong \p^{2}_{\mathbb{R}}(1,2)$ be the blow-up of $\p^{2}_{\mathbb{R}}$ at one real point $q$ and two pairs of complex conjugate points $S=\lbrace s,\overline{s} \rbrace$ and $R=\lbrace r, \overline{r} \rbrace$. By first blowing up $\p^{2}_{\mathbb{R}}$ in $S$ and then blowing down the strict transform of the real line passing through $s$ and $\overline{s}$, we note that $X$ is isomorphic to the blow-up of the quadric $Q_{3,1}$ at two rational points $p$, $q$ and one pair of complex conjugate points $R=\lbrace r, \overline{r} \rbrace$, that we denote by $Q_{3,1}(2,1)$. In what follows, we will work with this second birational model of $X$.

In this case, using the same notations as before, one has \begin{align*}
\mathcal{R}_{1} &:= \lbrace f+\overline{f}-E_{p}-E_{q} , f+\overline{f}-E_{r}-E_{\overline{r}} \rbrace,\\
\mathcal{R}_{2} &:= \lbrace f+\overline{f}-E_{p}-E_{r} , f+\overline{f}-E_{q}-E_{\overline{r}} \rbrace,\\
\mathcal{R}_{3} &:=\lbrace f+\overline{f}-E_{p}-E_{\overline{r}} , f+\overline{f}-E_{q}-E_{r} \rbrace,\\
\mathcal{R}_{4} &:= \lbrace f , f+2\overline{f}-E_{p}-E_{q}-E_{r}-E_{\overline{r}} \rbrace,\\
\mathcal{R}_{5} &:= \lbrace \overline{f} , 2f+\overline{f}-E_{p}-E_{q}-E_{r}-E_{\overline{r}} \rbrace.
\end{align*}
The action of the complex involution on these pairs of conic bundles is then represented in Figure \ref{Fig:Figure_2_Galois_action_on_conic_bundles_Q31(2,1)}.

\begin{figure}
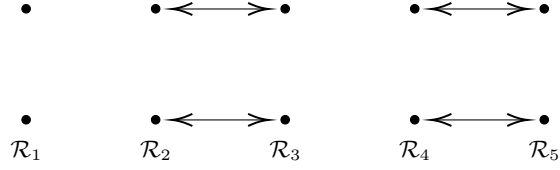

\centering
 
\caption{The action of $\sigma$ on the five pairs of conic bundles.}
\label{Fig:Figure_2_Galois_action_on_conic_bundles_Q31(2,1)}
\end{figure}

As in the previous case, this gives immediate restrictions on the groups $A_{0}$ and $A'$, as it is shown in \cite[Proposition 6.6]{yas22}; namely, $A_{0}$ is isomorphic to a subgroup of $\lbrace (0,0,0,0,0),(0,0,0,$ $1,1),(0,1,1,0,0),\\(0,1,1,1,1)\rbrace \cong (\Z/2\Z)^{2}$, and $A'$ lies in the group $\langle a=(23), b=(2435) \, \vert \, a^{2}=b^{4}=1, aba=b^{-1} \rangle \cong \D_{4}$.

We now go a bit further in describing geometrically the elements of the groups $A_{0}$ and $A'$.

\begin{lemma}
Let $p, q, r, \overline{r} \in \p^{1}_{\mathbb{C}} \times \p^{1}_{\mathbb{C}} \cong (Q_{3,1})_{\mathbb{C}}$ be respectively two real points and two complex conjugate points such that the blow-up of $p, q, r, \overline{r}$ is a real del Pezzo surface. Then up to automorphisms of $Q_{3,1}$, the points $p$, $q$ and $r$ can be chosen to be $p=([1:0],[1:0])$, $q=([0:1],[0:1])$ and $r=([1:1],[\mu:1])$ for some $\mu \in \mathbb{C}\setminus\lbrace 0,\pm1 \rbrace$, respectively.
\label{lem:Lemma_choice_of_points_Q31(2,1)}
\end{lemma}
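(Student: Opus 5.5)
We work throughout in the coordinates of Proposition \ref{prop:Proposition_description_Aut(Q31)}. Via $\varphi$, the real points of $Q_{3,1}$ are exactly the points of $\p^{1}_{\mathbb{C}} \times \p^{1}_{\mathbb{C}}$ fixed by $\sigma : (x,y) \mapsto (\overline{y},\overline{x})$, that is, the points of the form $(x,\overline{x})$. The subgroup $\mathcal{F}=\lbrace (A,\overline{A}) \rbrace$ acts on such points by $(x,\overline{x}) \mapsto (Ax,\overline{Ax})$. The plan is to use this $\PGL_{2}(\mathbb{C})$-action to normalize the two real points and then the complex point $r$, in three steps.

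\textbf{Step 1: normalize $p$ and $q$.} Write $p=(x_{p},\overline{x_{p}})$ and $q=(x_{q},\overline{x_{q}})$. Since $p \neq q$, we have $x_{p} \neq x_{q}$. Because $\PGL_{2}(\mathbb{C})$ is $3$-transitive on $\p^{1}_{\mathbb{C}}$, we can choose $A$ with $A x_{p}=[1:0]$ and $A x_{q}=[0:1]$. Then $(A,\overline{A})$ sends $p$ to $([1:0],[1:0])$ and $q$ to $([0:1],[0:1])$. The stabilizer of this pair of points in $\mathcal{F}$ contains the diagonal maps $A=\mathrm{diag}(\lambda,1)$, $\lambda \in \mathbb{C}^{*}$, which act by $([u_{0}:u_{1}],[v_{0}:v_{1}]) \mapsto ([\lambda u_{0}:u_{1}],[\overline{\lambda} v_{0}:v_{1}])$.

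\textbf{Step 2: the del Pezzo condition on $r$.} Write $r=([a:b],[c:d])$. The blow-up is del Pezzo exactly when no two of the four points lie on a common fibre of either ruling, and no three lie on a common $(1,1)$-curve.
\begin{itemize}
\item $r$ is not on the fibres of $p$ or $q$, so $a,b,c,d \neq 0$.
\item $r \neq \overline{r}$, and $r$ and $\overline{r}=([\overline{d}:\overline{c}]$-type conjugate$)$ must not share a fibre. Computing $\overline{r}=([\overline{c}:\overline{d}],[\overline{a}:\overline{b}])$, this means $c/d \neq \overline{a/b}$.
\end{itemize}

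\textbf{Step 3: rescale to make the first coordinate of $r$ equal to $[1:1]$.} Apply $\mathrm{diag}(\lambda,1)$ with $\lambda=b/a$. This sends $r$ to $([1:1],[\mu:1])$ with $\mu=\overline{(b/a)}\,(c/d)$. The condition $c/d \neq \overline{a/b}$ from Step 2 becomes $\mu \neq 1$, and $\mu \neq 0$ because $c \neq 0$.

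\textbf{Main obstacle: excluding $\mu=-1$.} The curves of bidegree $(1,1)$ through $p=([1:0],[1:0])$ and $q=([0:1],[0:1])$ have equations $\alpha u_{0}v_{1}+\beta u_{1}v_{0}=0$. Such a curve passes through $r=([1:1],[\mu:1])$ iff $\alpha+\beta\mu=0$. It passes through $\overline{r}=([\overline{\mu}:1],[1:1])$ iff $\alpha\overline{\mu}+\beta=0$. Both hold for some $(\alpha,\beta)\neq 0$ exactly when $\mu\overline{\mu}=1$, so this condition does not exclude $-1$ alone; it gives $|\mu|\neq 1$. Other configurations, such as a $(1,1)$-curve through $p,r,\overline{r}$, may impose further conditions.

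The route I would take here is to compare with the case Lemma \ref{lem:Lemma_choice_of_points_Q31(0,2)_Rob}, which suggests the statement's $\pm 1$ comes from the fibre conditions. Concretely, I would recheck the conjugation formula, since $\sigma$ swaps the factors, and verify whether the fibre condition really yields $\mu\neq 1$ or instead $\mu \neq \pm 1$. If $\mu=-1$ turns out not to be forced by any of these conditions, the statement should be read as: admissible values of $\mu$ avoid $0$ and $1$, plus whatever the collinearity analysis adds, and these are contained in the stated set.
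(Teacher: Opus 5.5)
Your normalization is the same as the paper's: send $p,q$ to $([1:0],[1:0])$ and $([0:1],[0:1])$ by some $(A,\overline{A})\in\mathcal{F}$, then rescale by $\mathrm{diag}(\lambda,1)$. Your fibre analysis correctly gives $\mu\neq 0$ and $\mu\neq 1$. The gap is that you never conclude, and the ``main obstacle'' is a misreading of your own computation. You showed that when $\mu\overline{\mu}=1$ all four points lie on the $(1,1)$-curve $u_{1}v_{0}-\mu u_{0}v_{1}=0$, so the del Pezzo hypothesis forces $\vert\mu\vert\neq 1$. Since $-1$ lies on the unit circle, this excludes $\mu=-1$. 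For $\mu=-1$ the curve is $u_{0}v_{1}+u_{1}v_{0}=0$, which is exactly the real $(1,1)$-curve through $p,q,r,\overline{r}$ that the paper uses to rule out $\mu=-1$. So $-1$ is excluded by the $(1,1)$-curve condition, not by a fibre condition. There is nothing to recheck: the conjugate $\overline{r}=([\overline{\mu}:1],[1:1])$ is correct. The lemma only asserts that $0,1,-1$ are incompatible with the del Pezzo hypothesis, and your three computations prove exactly that, so no reinterpretation of the statement is needed. In fact you prove more than the paper states: in the normal form the blow-up is del Pezzo if and only if $\mu\neq 0$ and $\vert\mu\vert\neq 1$, whereas the paper's proof only tests the three values $0,\pm1$.

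You should also correct the criterion you quote in Step 2. The condition is not ``no three of the points on a common $(1,1)$-curve''. The $(1,1)$-forms span a $4$-dimensional space, so any three points lie on such a curve. The correct condition, which translates ``no three of the five points of $\p^{2}$ collinear'', is that no two points lie on a common fibre and no $(1,1)$-curve passes through all four. Your worry about a curve through $p,r,\overline{r}$ is therefore moot, since such a curve always exists. In any case, extra necessary conditions could only shrink the set of admissible $\mu$, which cannot endanger the statement. Finally, tidy up the garbled first description of $\overline{r}$ in Step 2; the formula you then use, $\overline{r}=([\overline{c}:\overline{d}],[\overline{a}:\overline{b}])$, is the correct one.
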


\begin{proof}
We have $p=([a:b],[\overline{a}:\overline{b}])$ and $q=([c:d],[\overline{c}:\overline{d}])$ for some $a, b, c, d \in \mathbb{C}$, and $ad-bc \neq 0$ because $p$ and $q$ are not on the same ruling of $(Q_{3,1})_{\mathbb{C}}$. It follows that the map $A : [u:v] \mapsto [du-cv:-bu+av]$ is contained in $\PGL_{2}(\mathbb{C})$. Then $(A,\overline{A}) \in \Aut_{\mathbb{R}}(Q_{3,1})$ sends respectively $p$ and $q$ onto $([1:0],[1:0])$ and $([0:1],[0:1])$. Now we may assume that $r=([\lambda:1],[\rho:1])$ with $\lambda, \rho \in \mathbb{C}^{*}$ because by hypothesis the points are not on the same fibres by any projection.
We take $$ (B,\overline{B}) = \left( \begin{pmatrix} \frac{1}{\lambda} & 0 \\[1mm] 0 & 1\end{pmatrix} , \begin{pmatrix} \frac{1}{\overline{\lambda}} & 0 \\[1mm] 0 & 1\end{pmatrix} \right) \in \Aut_{\mathbb{R}}(Q_{3,1}),$$ it fixes $p$ and $q$, and it sends $r$ and $\overline{r}$ onto $([1:1],[\mu:1])$ and $([\overline{\mu}:1],[1:1])$, respectively, where $\mu=\frac{\rho}{\overline{\lambda}}$.

If $\mu \in \lbrace 0,\pm1 \rbrace$, $X$ is not a del Pezzo surface. Indeed, when $\mu=1$ the points $r$ and $\overline{r}$ are equal; when $\mu=0$ the points $q$ and $\overline{r}$ are on the same fibre, as well as the points $q$ and $r$; and finally, when $\mu=-1$ there is a real curve of bidegree $(1,1)$ of $(Q_{3,1})_{\mathbb{C}}$ passing through the four points.
\end{proof}

\begin{proposition} Let $X \cong Q_{3,1}(2,1)$ be a real del Pezzo surface of degree $4$.
\begin{enumerate}
\item The kernel $A_{0}$ of $\rho : \Aut_{\mathbb{R}}(X) \rightarrow
\Sym_{5}$ is isomorphic to $(\Z/2\Z)^{2}$, and it is generated by the elements $\gamma_{1} = (0,1,1,0,0)$ and $\gamma = (0,0,0,1,1)$.
\label{it:item_(1)_Proposition_Aut(X)_Q31(2,1)}
\item The image $A'$ of $\rho$ is either $\langle (23)(45) \rangle$ or $\lbrace \id \rbrace$. Moreover, the former happens if and only if $ \mu \in \mathbb{R} \setminus \lbrace 0,\pm 1 \rbrace$.
\label{it:item_(2)_Proposition_Aut(X)_Q31(2,1)}
\end{enumerate}
Furthermore, the elements $\gamma_{1}$ and $\gamma$ are realized in $A_{0}$ as the lift of an involution of $Q_{3,1}$ and the lift of a birational involution of $Q_{3,1}$ with base-points $p, q, r, \overline{r}$, respectively. The element $\tau=(23)(45)$ can be realized as the lift on $X$ of an involution of $Q_{3,1}$.
\label{Prop:Proposition_Aut(X)_Q31(2,1)}
\end{proposition}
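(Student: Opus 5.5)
The plan is to exploit the fact that $G_{X_{\mathbb{C}}}$ is abelian and that $\gamma$ is injective (Proposition \ref{prop:Structure_Aut(X_C)}), and then to realise the resulting elements by explicit maps on $Q_{3,1}$. By Lemma \ref{lem:Lemma_choice_of_points_Q31(2,1)} we fix
\[
p=([1:0],[1:0]),\quad q=([0:1],[0:1]),\quad r=([1:1],[\mu:1]),\quad \overline{r}=([\overline{\mu}:1],[1:1]).
\]

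\textbf{Step 1: the group $A_{0}$.} For $g\in G_{X_{\mathbb{C}}}$, the conjugate $\sigma g\sigma$ again lies in $G_{X_{\mathbb{C}}}$. Its vector $\gamma(\sigma g \sigma)$ is $\gamma(g)$ with coordinates permuted by the permutation that $\sigma$ induces on the pairs $\mathcal{R}_{i}$. Swaps inside a pair do not matter, since each coordinate only records whether the pair is swapped. From Figure \ref{Fig:Figure_2_Galois_action_on_conic_bundles_Q31(2,1)}, this permutation is $(23)(45)$. Since $\gamma$ is injective, $g$ is real exactly when $\gamma(g)$ is $(23)(45)$-invariant, so
\[
A_{0}=\{(a_{i}) \,:\, a_{2}=a_{3},\ a_{4}=a_{5},\ \textstyle\sum a_{i}=0\}.
\]
This forces $a_{1}=0$ and gives the four elements of the statement, so $A_{0}\cong(\Z/2\Z)^{2}=\langle\gamma_{1},\gamma\rangle$.

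For the geometric realisations, $\gamma_{1}$ should come from the real automorphism
\[
\alpha:([u_{0}:u_{1}],[v_{0}:v_{1}])\mapsto([\overline{\mu}u_{1}:u_{0}],[\mu v_{1}:v_{0}]),
\]
which has the form $(A,\overline{A})$ and is therefore real by Proposition \ref{prop:Proposition_description_Aut(Q31)}. It is an involution, preserves both rulings, swaps $p\leftrightarrow q$ and $r\leftrightarrow\overline{r}$, and lifts to $X$. Its action on $\Pic$ fixes $\mathcal{R}_{1}$, $f$ and $\overline{f}$ and swaps the two members of $\mathcal{R}_{2}$ and of $\mathcal{R}_{3}$, so it induces $\gamma_{1}$. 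For $\gamma$ I would write, as for $\phi_{3}$ in Case 1, a birational involution of $Q_{3,1}$ with base points $p,q,r,\overline{r}$ acting on $\Pic(X)$ by the same matrix as there. It sends $f\mapsto f+2\overline{f}-\sum E$ and $\overline{f}\mapsto 2f+\overline{f}-\sum E$, and is given by bihomogeneous forms of bidegree $(1,2)$ and $(2,1)$ through the four points. Its existence is already guaranteed by the abstract argument above, so only the formula needs computing.

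\textbf{Step 2: the image $A'$.} Any $h\in\Aut_{\mathbb{R}}(X)$ commutes with $\sigma$, so $\rho(h)$ centralises $(23)(45)$; this gives the inclusion $A'\subset\D_{4}$ recalled above. Intersecting with the list \eqref{list_of_possible_group_for_Im_Aut(X_C)} leaves only the subgroups of $\langle(23),(2435)\rangle$ isomorphic to $\{\id\}$, $\Z/2\Z$ or $\Z/4\Z$. For realisation, $\tau:(x,y)\mapsto(y,x)$ fixes $p$ and $q$ and sends $r$ to $([\mu:1],[1:1])$, which equals $\overline{r}$ iff $\mu\in\mathbb{R}$. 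In that case $\tau$ lifts to $X$, and since it swaps $f$ and $\overline{f}$ it induces $(23)(45)$.

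\textbf{Step 3: the converse and exclusion of the other permutations.} This is the main obstacle. Take $h\in\Aut_{\mathbb{R}}(X)$ with $\rho(h)\neq\id$. After composing with a suitable element of $A_{0}$ (using $\gamma$), we may assume $h$ preserves the set $\{f,\overline{f}\}$ of classes, which is possible whenever $\rho(h)$ preserves $\{4,5\}$. Then $h$ descends to an element of $\Aut_{\mathbb{R}}(Q_{3,1})$ of the form $(A,\overline{A})$ or $(A,\overline{A})\circ\tau$ that preserves $\{p,q,r,\overline{r}\}$ with $\{p,q\}$ stable. I would enumerate the possibilities:
\begin{itemize}[label=$\cdot$]
\item The element $A$ is diagonal or anti-diagonal, since it fixes or swaps $[1:0]$ and $[0:1]$.
\item The condition on $r$ then gives polynomial equations in $\mu$. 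These should be incompatible with $\mu\notin\{0,\pm1\}$ in the case of $(23)$ (rulings preserved, but $\mathcal{R}_{2}$ and $\mathcal{R}_{3}$ exchanged).
\item The same equations should force $\mu=\overline{\mu}$ in the case of $(23)(45)$.
\end{itemize}
The permutations moving $4$ or $5$ into $\{2,3\}$, namely $(2435)$, its inverse, $(24)(35)$ and $(25)(34)$, do not preserve $\{4,5\}$, so the descent argument does not apply to them directly. For these I would first try to show that they would make the set $\{p,q,r,\overline{r}\}$ too symmetric: via the plane model, they would yield a projective symmetry of the five points incompatible with the constraint on $\mu$. The careful computation in this step is where I expect most of the work to lie.
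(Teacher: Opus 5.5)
Your Steps 1 and 2 are sound, and Step 1 is a neater route than the paper's. The paper only extracts $A_0\subseteq\{(0,a,a,b,b)\}$ from the Galois action and then writes down $\delta_1$ and a bidegree-$(1,2)/(2,1)$ involution explicitly. You instead get all four elements at once, because $\gamma$ is an isomorphism and $\gamma(\sigma g\sigma)$ is $\gamma(g)$ permuted by $(23)(45)$. The descent part of Step 3 also works, but only once you note that $h_*\overline f=\sigma_*h_*f$ for real $h$. This kills the ``mixed'' cases, which the paper excludes by non-integrality.

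\textbf{The genuine gap is the last part of Step 3.} The double transpositions exchanging $\{2,3\}$ with $\{4,5\}$ cannot be excluded, because they occur. The paper's own proof shows that $(25)(34)$ is realised exactly when $\mu+\overline\mu=2$. The explicit real birational involution $\psi$ given there has base points $p,r,\overline r$, fixes $q$, and contracts $f_p,\overline{f_p},f_{pr\overline r}$. Its lift is a real automorphism of $X$ sending $f$ to $f+\overline f-E_p-E_{\overline r}\in\mathcal R_3$. For instance, $\mu=1+i$ gives four points in general position ($|\mu|\neq1$), so $X$ is del Pezzo and $(25)(34)\in A'$.

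Now conjugate by the real automorphism $([u_0:u_1],[v_0:v_1])\mapsto([u_1:u_0],[v_1:v_0])$. It swaps $p$ and $q$, replaces $\mu$ by $1/\mu$, and relabels $\mathcal R_2\leftrightarrow\mathcal R_3$. So you also get $(24)(35)\in A'$ whenever $\mathrm{Re}(1/\mu)=1$. The paper's dismissal of $(24)(35)$ reverses an implication: two of the three commuting involutions lying in $A'$ force the third, not the other way round. Hence item (2) as stated fails on these two real curves in the $\mu$-plane, and your ``too symmetric'' contradiction cannot be found there.

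\textbf{How to finish with your own method.} Your approach gives the correct answer if pushed one step further. For $\pi=(23)(45)$ one has $(1+\pi)G_{X_{\mathbb C}}=G_{X_{\mathbb C}}^{\pi}=\{(0,a,a,b,b)\}$, so $H^1(\langle\sigma\rangle,G_{X_{\mathbb C}})=0$ for the twisted action. Therefore every fibre $\rho_1^{-1}(\eta)$ with $\eta$ commuting with $\pi$ contains a $\sigma$-invariant element, and $A'$ is exactly the centraliser of $\pi$ in $H_{X_{\mathbb C}}$.

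Now use the action on the five singular members of the pencil of quadrics containing $X$. The elements of this centraliser are the real M\"obius maps that preserve the five points, and they fix the unique real one. It follows that $A'$ has order at most $2$, and both $(2435)$ and transpositions are impossible. The generator, if there is one, is:
\begin{itemize}
\item $(23)(45)$, when $\mu\in\mathbb R$;
\item $(25)(34)$, when $\mathrm{Re}\,\mu=1$;
\item $(24)(35)$, when $\mathrm{Re}(1/\mu)=1$.
\end{itemize}

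\textbf{A smaller omission: $(45)$.} Your bullets skip $(45)$. In your descent enumeration, $(A,\overline A)\circ\tau$ with $A$ diagonal fixing $r$ forces $|\mu|=1$, which the condition $\mu\notin\{0,\pm1\}$ does not exclude. You need the observation that for $|\mu|=1$ the points $p,q,r,\overline r$ lie on the real $(1,1)$-curve $u_1v_0=\mu u_0v_1$. So the correct parameter range in the normalisation lemma is $\mu\neq0$, $|\mu|\neq1$.
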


\begin{proof}
(\ref{it:item_(1)_Proposition_Aut(X)_Q31(2,1)}) We have to show the existence of automorphisms of $X$ corresponding to the elements $\gamma_{1}$ and $\gamma$ in $(\Z/2\Z)^{5}$.

Consider the real involution $ \delta_{1} : ([u_{0}:u_{1}],[v_{0}:v_{1}]) \mapsto ([\overline{\mu}u_{1}:u_{0}],[\mu v_{1}:v_{0}])$ of $Q_{3,1}$ which preserves the fibrations $f, \overline{f}$ on $(Q_{3,1})_{\mathbb{C}}$, and which interchanges both $p$ with $q$ and $r$ with $\overline{r}$. By blowing up $p, q, r, \overline{r}$ it lifts to an automorphism of $X$ defined over $\mathbb{R}$ which is contained in the kernel of $\rho$ and whose induced action on the exceptional pairs $\mathcal{R}_{i}$'s is given by $\langle \gamma_{1}=(0,1,1,0,0) \rangle$. Now consider the real birational involution $\phi : ([u_{0}:u_{1}],[v_{0}:v_{1}]) \dashmapsto ([A_{0}(u_{0},u_{1},v_{0},v_{1}):A_{1}(u_{0},u_{1},v_{0},v_{1})],[B_{0}(u_{0},u_{1},v_{0},v_{1}):B_{1}(u_{0},u_{1},v_{0},v_{1})])$ of $Q_{3,1}$, where
\begin{align*}
A_{0}(u_{0},u_{1},v_{0},v_{1}) &= \overline{\mu}v_{0}((\overline{\mu}\mu-1)u_{1}v_{1} + (1-\mu)u_{0}v_{1} + (1-\overline{\mu})u_{1}v_{0}),\\
A_{1}(u_{0},u_{1},v_{0},v_{1}) &= v_{1}(\mu(\overline{\mu}-1)u_{0}v_{1} + (1-\mu \overline{\mu})u_{0}v_{0} + \overline{\mu}(\mu-1)u_{1}v_{0}),\\
B_{0}(u_{0},u_{1},v_{0},v_{1}) &= -\mu u_{0}((\mu \overline{\mu}-1)u_{1}v_{1} + (1-\mu)u_{0}v_{1} + (1-\overline{\mu})u_{1}v_{0}),\\
B_{1}(u_{0},u_{1},v_{0},v_{1}) &= -u_{1}(\mu(\overline{\mu}-1)u_{0}v_{1} + (1-\mu \overline{\mu})u_{0}v_{0} + \overline{\mu}(\mu-1)u_{1}v_{0}),
\end{align*}
and whose base-points are $p$, $q$, $r$ and $\overline{r}$. By blowing up these points it lifts to a biregular morphism of $X$, defined over $\mathbb{R}$, and whose action on the Picard group of $X$ is given by the matrix\[ \small \begin{pmatrix} 1 & 2 & 1 & 1 & 1 & 1 \\ 2 & 1 & 1 & 1 & 1 & 1 \\ -1 & -1 & 0 & -1 & -1 & -1 \\ -1 & -1 & -1 & 0 & -1 & -1 \\ -1 & -1 & -1 & -1 & 0 & -1 \\ -1 & -1 & -1 & -1 & -1 & 0 \end{pmatrix},\]with respect to the basis $\lbrace f,\overline{f},E_{p},E_{q},E_{r},E_{\overline{r}} \rbrace$. It follows that this automorphism is in the kernel of $\rho$ and its action on the pairs of conic bundle structures is the one of the element $\gamma=(0,0,0,1,1)$.

(\ref{it:item_(2)_Proposition_Aut(X)_Q31(2,1)}) Recall that $A'$ is a subgroup of $\langle a=(23), b=(2435) \, \vert \, a^{2}=b^{4}=1, aba=b^{-1} \rangle \cong \D_{4}$ isomorphic to $\lbrace \id \rbrace$, $\Z/2\Z$ or $\Z/4Z$ (see list \eqref{list_of_possible_group_for_Im_Aut(X_C)}).

If there is an automorphism $\alpha$ of $X$ exchanging $\mathcal{R}_{2}$ and $\mathcal{R}_{3}$, then its action on those pairs would be given either by \[(1) \, \myinline{
}.\]
We may assume that the action is as in (1) since we can compose an automorphism whose action is as in (2) with an element of $A_{0}$ that corresponds to $\gamma_{1}=(0,1,1,0,0)$ to obtain an automorphism whose action is that in (1). On the pairs $\mathcal{R}_{4}$ and $\mathcal{R}_{5}$ the action of $\alpha$ is then given either by \[(3) \,  \myinline{%
};\] and as before, we may assume that its action is as in (3) by composing an automorphism whose action is as in (4) with an element of $A_{0}$ corresponding to $\gamma=(0,0,0,1,1)$ to obtain an automorphism whose action is that in (3). Summarising, we have to study two cases:
\begin{center} %
 \end{center}
In both cases (a) and (b), $f, \overline{f}$ are preserved by looking at pairs $\mathcal{R}_{4}, \mathcal{R}_{5}$, and hence $f+\overline{f}$ is preserved. In the case (a), looking at the pair $\mathcal{R}_{1}$ we see that $f+\overline{f}-E_{p}-E_{q}, f+\overline{f}-E_{r}-E_{\overline{r}}$ are preserved; then $E_{p}+E_{q}$ and $E_{r}+E_{\overline{r}}$ are preserved while the action on pairs $\mathcal{R}_{2}$ and $\mathcal{R}_{3}$ gives that $\alpha$ interchanges $E_{p}+E_{r}$ with $E_{p}+E_{\overline{r}}$ and $E_{q}+E_{\overline{r}}$ with $E_{q}+E_{r}$. This implies that $E_{p}, E_{q}$ are fixed and $E_{r}, E_{\overline{r}}$ are exchanged. 
So $\alpha$ comes from an automorphism $\alpha'$ of $(Q_{3,1})_{\mathbb{C}} \cong \p^{1}_{\mathbb{C}} \times \p^{1}_{\mathbb{C}}$ defined over $\mathbb{R}$ which preserves the fibrations, fixes $p, q$ and interchanges $r$ with $\overline{r}$. Let us see that such an automorphism does not exist. Indeed, the automorphism $\alpha'$ would be given by $(x,y) \mapsto (Ax,\overline{A}y)$ where $A \in \PGL_{2}(\mathbb{C})$ (\cite[Proposition 4.1]{rob16}) with $\alpha'(p)=p$, $\alpha'(q)=q$,  and so $A=\begin{bmatrix} \lambda & 0 \\ 0 & 1 \\ \end{bmatrix}$ is diagonal with $\lambda \in \mathbb{C}^{*}$ under the choice of the points $p=([1:0],[1:0])$ and $q=([0:1],[0:1])$. Since $\alpha'(r)=\overline{r}$, where $r=([1:1],[\mu:1])$ for some $\mu \in \mathbb{C}\setminus\lbrace0,\pm1\rbrace$ (Lemma \ref{lem:Lemma_choice_of_points_Q31(2,1)}), we have $\lambda=\overline{\mu}$ and $\overline{\lambda}\mu=1$ and hence $\mu^{2}=1$, which gives a contradiction. So an automorphism of $X$ of type (a) does not exist. In case (b), $\alpha$ is not even an automorphism of the Picard group because the matrix corresponding to an action described in (b) with respect to the basis $\lbrace f,\overline{f},E_{p},E_{q},E_{r},E_{\overline{r}} \rbrace$ is \[ 
 \notin \GL_{6}(\mathbb{Z}). \] Therefore, an automorphism that acts as $(23)$ does not belong to the image of $\rho$.

Now, we check that there is an automorphism of $X$ which acts as $(23)(45)$ if and only if $\mu \in \mathbb{R}$. As before, we can see that automorphisms corresponding to $(23)(45)$ are, up to composition with an element of $A_{0}$ corresponding to $\gamma_{1}+\gamma=(0,1,1,1,1)$, of the form \begin{center} %
 \end{center}
For the case (a), looking at the pairs $\mathcal{R}_{4}$ and $\mathcal{R}_{5}$ we see that $f$ and $\overline{f}$ are exchanged, and then $f+\overline{f}$ is preserved. The exchange of pairs $\mathcal{R}_{2}$ and $\mathcal{R}_{3}$ gives that $f+\overline{f}-E_{p}-E_{r}$ with $f+\overline{f}-E_{p}-E_{\overline{r}}$ are interchanged and so are $f+\overline{f}-E_{q}-E_{\overline{r}}$ with $f+\overline{f}-E_{q}-E_{r}$. This implies that $E_{p}+E_{r}$ with $E_{p}+E_{\overline{r}}$ are interchanged and $E_{q}+E_{\overline{r}}$ with $E_{q}+E_{r}$ are interchanged, respectively. So an automorphism of type $(23)(45)$ for case (a) comes from an automorphism $\delta$ of $(Q_{3,1})_{\mathbb{C}} \cong \p^{1}_{\mathbb{C}} \times \p^{1}_{\mathbb{C}}$ defined over $\mathbb{R}$ which interchanges $f$ with $\overline{f}$, $r$ with $\overline{r}$, and fixes $p$ and $q$. So $\delta$ is given by $\delta : (x,y) \mapsto (\overline{A}y,Ax)$ where $A \in \PGL_{2}(\mathbb{C})$ is satisfying $$\overline{A} 
.$$ 
This implies that $$A = %
.$$ Hence, $\lambda=1$ and $\mu = \overline{\mu}$. Therefore this automorphism exists if and only if $\mu \in \mathbb{R}$. Case (b) is not possible because the matrix of its action on the Picard group with respect to the basis $\lbrace f,\overline{f},E_{p},E_{q},E_{r},E_{\overline{r}} \rbrace$ is \small\[ %
 \notin \GL_{6}(\mathbb{Z}), \]and this shows that it is not even an automorphism of the Picard group.

Suppose that there is an automorphism of $X$ acting as the double transposition $(25)(34)$ on the five pairs of conic bundle structures. We may assume that its action is one of the following two by composing with an automorphism of $A_{0}$ corresponding to $\gamma_{1}=(0,1,1,0,0)$ or to $\gamma=(0,0,0,1,1)$: \begin{center} %
 \end{center}
In case (a), the induced action on the Picard group of $X$ with respect to the basis $\{f,\overline{f},E_{p},E_{q},E_{r},E_{\overline{r}}\}$ is given by the matrix \[ \small\ %
,\] meaning that such an automorphism, if it exists, comes from the lift of a real birational involution $\psi$ of $Q_{3,1}$ with base-points $p=([1:0],[1:0]), r=([1:1],[\mu:1]), \overline{r}=([\overline{\mu}:1],[1:1])$, and which fixes $q=([0:1],[0:1])$; it contracts the curves $f_{p}$ onto $r$, $\overline{f_{p}}$ onto $\overline{r}$ and $f_{pr\overline{r}}$ onto $p$. We can check by explicit computations that such a birational involution of bidegree $(1,1)$ exists if and only if $\mu+\overline{\mu}=2$ and is of the form $\psi : ([u_{0}:u_{1}],[v_{0}:v_{1}]) \dashmapsto ([u_{0}v_{1}-\overline{\mu}u_{1}v_{0}:u_{0}v_{1}-u_{1}v_{0}+(\mu-1)u_{1}v_{1}],[\mu u_{0}v_{1}-u_{1}v_{0}:u_{0}v_{1}-u_{1}v_{0}+(\mu-1)u_{1}v_{1}])$. 
Case (b) is not possible because the matrix of an action described in (b) on the Picard group with respect to the basis $\{f,\overline{f},E_{p},E_{q},E_{r},E_{\overline{r}}\}$ is given by \small\[ 
 \notin \GL_{6}(\mathbb{Z}), \] which shows that it is not even an automorphism of the Picard group.

Therefore, an automorphism of type $(24)(35)$ would exist if and only if automorphisms of type $(23)(45)$ and $(25)(34)$ exist, that is if and only if $\mu+\overline{\mu}=2$ and $\mu=\overline{\mu}$ which implies $\mu=1$, which is not possible. So, an automorphism that acts as $(24)(35)$ does not belong to the image of $\rho$.
   
Finally, the action of automorphisms of type $(2435)$ on the pairs $\mathcal{R}_{i}$'s of conic bundles is of the form \begin{center} %
 \end{center}
by composing with the element of $A_{0}$ corresponding to $\gamma_{1}+\gamma=(0,1,1,1,1)$. In case (a), the induced action of such an automorphism on the Picard group of $X$ with respect to the basis $\{f,\overline{f},E_{p},E_{q},E_{r},E_{\overline{r}}\}$ is given by the matrix \[ \small\ %
.\] If such an automorphism exists, it comes from a real birational map $\psi$ of $Q_{3,1}$ of order $4$ with $3$ base-points, $p=[1:0],[1:0]$, $r=[1:1],[\mu:1]$ and $\overline{r}=[\overline{\mu}:1],[1:1]$, and which fixes the point $q=[0:1],[0:1]$. It is such that $\psi^{2}$ induces an automorphism of $X$ of type $(23)(45)$, which exists if and only if $\mu \in \mathbb{R}$ and which is coming from the involution $\delta : ([u_{0}:u_{1}],[v_{0}:v_{1}]) \mapsto ([v_{0}:v_{1}],[u_{0}:u_{1}])$ of $Q_{3,1}$. Since the rational inverse of $\psi$ contracts $f_{p r \overline{r}}$ onto $p$, $\overline{f_{p}}$ onto $r$ and $f_{p}$ onto $\overline{r}$, one deduces that $\psi$ contracts $f_{p}$ onto $r$, $\overline{f_{p}}$ onto $\overline{r}$ and $f_{p r \overline{r}}$ onto $p$. One can check by explicit computations that such a birational transformation of $\p^{1}_{\mathbb{C}} \times \p^{1}_{\mathbb{C}} \cong (Q_{3,1})_{\mathbb{C}}$ exists only if $\mu=1$, but this value of $\mu$ is prohibited (see Lemma \ref{lem:Lemma_choice_of_points_Q31(2,1)}). Case (b) is not even an automorphism of the Picard group since the matrix corresponding to the action described in (b) with respect to the basis $\{f,\overline{f},E_{p},E_{q},E_{r},E_{\overline{r}}\}$ is \small\[ \begin{pmatrix} 1 & 1 & 1 & 0 & 0 & 1 \\ 1 & 1 & 1 & 0 & 1 & 0 \\ -1 & -1 & -\frac{3}{2} & -\frac{1}{2} & -\frac{1}{2} & -\frac{1}{2} \\ 0 & 0 & -\frac{1}{2} & \frac{1}{2} & \frac{1}{2} & \frac{1}{2} \\ 0 & -1 & -\frac{1}{2} & \frac{1}{2} & -\frac{1}{2} & -\frac{1}{2} \\ -1 & 0 & -\frac{1}{2} & \frac{1}{2} & -\frac{1}{2} & -\frac{1}{2} \end{pmatrix} \notin \GL_{6}(\mathbb{Z}). \] Therefore, an element of type $(2435)$ does not belong to the image of $\rho$.
 
\end{proof}  

\begin{remark} As in the case of $Q_{3,1}(0,2)$, one can see from the description of families of automorphisms given in Proposition \ref{Prop:Proposition_Aut(X)_Q31(2,1)} that for the general quartic del Pezzo surfaces of type $Q_{3,1}(2,1)$, the action of $\Gal(\mathbb{C}/\mathbb{R})$ on the Picard group (see Figure \ref{Fig:Figure_2_Galois_action_on_conic_bundles_Q31(2,1)}) is not realized by an automorphism of the surface. 
\end{remark}

\subsubsection{Case 3: $X \cong Q_{3,1}(4,0)$} Let $X \cong \p^{2}_{\mathbb{R}}(3,1) \cong Q_{3,1}(4,0)$ be the real rational quartic del Pezzo obtained by blowing up $\p^{2}_{\mathbb{R}}$ at three real points $p,q,r$ and one pair of complex conjugate points $S=\{s,\overline{s}\}$. Since we can first blow up $\p^{2}_{\mathbb{R}}$ in $S$ and then blowing down the strict transform of the real line of $\p^{2}_{\mathbb{R}}$ passing through $s$ and $\overline{s}$ onto the quadric $Q_{3,1}$, we see that $X$ is isomorphic to the blow-up of $Q_{3,1}$ at four general real points, say $p, q, r$ and $s$. From now on, we will work with this second description of the surface $X$.

In this case, and using the same notations as before, we get \begin{align*}
\mathcal{R}_{1} &:= \lbrace f+\overline{f}-E_{p}-E_{q} , f+\overline{f}-E_{r}-E_{s} \rbrace,\\
\mathcal{R}_{2} &:= \lbrace f+\overline{f}-E_{p}-E_{r} , f+\overline{f}-E_{q}-E_{s} \rbrace,\\
\mathcal{R}_{3} &:=\lbrace f+\overline{f}-E_{p}-E_{s} , f+\overline{f}-E_{q}-E_{r} \rbrace,\\
\mathcal{R}_{4} &:= \lbrace f , f+2\overline{f}-E_{p}-E_{q}-E_{r}-E_{s} \rbrace,\\
\mathcal{R}_{5} &:= \lbrace \overline{f} , 2f+\overline{f}-E_{p}-E_{q}-E_{r}-E_{s} \rbrace,
\end{align*}
and we represent in Figure \ref{Fig:Figure_3_Galois_action_on_conic_bundles_Q31(4,0)} below the way the anti-holomorphic involution $\sigma$ acts on the above five pairs of conic bundles.
\begin{figure}[h]
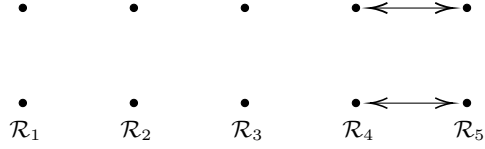

\centering

\caption{The action of $\sigma$ on the pairs of conic bundle structures.}
\label{Fig:Figure_3_Galois_action_on_conic_bundles_Q31(4,0)}
\end{figure} 
As an immediate consequence of the given Galois action (Figure \ref{Fig:Figure_3_Galois_action_on_conic_bundles_Q31(4,0)}), we see that for any element $(a_{1},\dots,a_{5})$ in $A_{0}$ one has $a_{4}=a_{5}$ so that $A_{0}$ lies inside the group $\lbrace (a_{1},a_{2},a_{3},a_{4},a_{5}) \in (\Z/2\Z)^{5} \, \vert \, a_{4}+a_{5}=a_{1}+a_{2}+a_{3}=0 \rbrace \cong (\Z/2\Z)^{3}$, and $A'$ is a subgroup of $\Sym\{\mathcal{R}_{1},\mathcal{R}_{2},\mathcal{R}_{3}\} \times \Sym\{\mathcal{R}_{4},\mathcal{R}_{5}\} \cong \Sym_{3} \times \Z/2\Z \cong \D_{6}$ (see \cite[Proposition 6.7]{yas22}) isomorphic to $\{\id\}, \Z/2\Z, \Z/3\Z$ or $\Sym_{3}$ as one can see from list \eqref{list_of_possible_group_for_Im_Aut(X_C)}.

Let us go a bit further, studying the geometry of elements of the groups $A_{0}$ and $A'$.

\begin{lemma}
Let $p, q, r, s \in \p^{1}_{\mathbb{C}} \times \p^{1}_{\mathbb{C}} \cong (Q_{3,1})_{\mathbb{C}}$ be four real points such that the blow-up of $Q_{3,1}$ in $p, q, r, s$ is a real del Pezzo surface. Then, up to automorphisms of $Q_{3,1}$, the points can be chosen to be $p=([1:0],[1:0]), q=([0:1],[0:1]), r=([1:1],[1:1])$ and $s=([\lambda:1],[\overline{\lambda}:1])$ for some $\lambda \in \mathbb{C}\setminus\mathbb{R}$.
\label{lem:Lemma_choice_of_points_Q31(4,0)}  
\end{lemma}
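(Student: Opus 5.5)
Real points of $Q_{3,1}$ are exactly the points of the form $([a:b],[\overline{a}:\overline{b}])$, since $\sigma$ exchanges the two factors and conjugates. I follow the proof of Lemma \ref{lem:Lemma_choice_of_points_Q31(2,1)}, using Proposition \ref{prop:Proposition_description_Aut(Q31)}: the real automorphisms of $Q_{3,1}$ include all pairs $(A,\overline{A})$ with $A\in\PGL_{2}(\mathbb{C})$. So the problem reduces to normalising the first coordinates $x_p,x_q,x_r,x_s\in\p^{1}_{\mathbb{C}}$ of the four points under $\PGL_{2}(\mathbb{C})$. The second coordinates are then automatically their conjugates.

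First I use the del Pezzo hypothesis. No two of the blown-up points may lie on a common fibre of $f$ or of $\overline{f}$, since the strict transform of that fibre would be a curve of self-intersection at most $-2$. Hence $x_p,x_q,x_r,x_s$ are pairwise distinct. Since $\PGL_{2}(\mathbb{C})$ is sharply $3$-transitive on $\p^{1}_{\mathbb{C}}$, there is a unique $A$ sending $x_p,x_q,x_r$ to $[1:0],[0:1],[1:1]$. Then $(A,\overline{A})$ sends $p,q,r$ to $([1:0],[1:0])$, $([0:1],[0:1])$, $([1:1],[1:1])$, and sends $s$ to $([\lambda:1],[\overline{\lambda}:1])$, where $\lambda$ is the cross-ratio of the four first coordinates. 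The distinctness of the $x$'s gives $\lambda\neq 0,1$ and $\lambda\neq\infty$, so the second coordinate of $x_s$ can indeed be scaled to $1$.

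The remaining point, and the only real obstacle, is to show $\lambda\notin\mathbb{R}$. I would argue that if $\lambda\in\mathbb{R}$, the four points lie on a common curve of bidegree $(1,1)$. Such a curve is given by a bilinear form $\alpha u_0v_0+\beta u_0v_1+\gamma u_1v_0+\delta u_1v_1=0$. Passing through $p$ forces $\alpha=0$, and passing through $q$ forces $\delta=0$. Passing through $r$ then gives $\gamma=-\beta$, so the curve is $u_0v_1=u_1v_0$, i.e.\ the diagonal $\{x=y\}$. The point $s=([\lambda:1],[\overline{\lambda}:1])$ lies on this diagonal iff $\lambda=\overline{\lambda}$. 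If so, the strict transform of this $(1,1)$-curve, which has self-intersection $2$ on $Q$, would have self-intersection $2-4=-2$ on $X$. This contradicts $X$ being del Pezzo. Hence $\lambda\in\mathbb{C}\setminus\mathbb{R}$, which proves the lemma.
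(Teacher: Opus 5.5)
Your proof is correct and follows essentially the same route as the paper: you normalise $p,q,r$ by an element $(A,\overline{A})\in\Aut_{\mathbb{R}}(Q_{3,1})$, and you rule out real $\lambda$ because the real $(1,1)$-curve $\{u_0v_1-u_1v_0=0\}$ would then pass through all four points. The only differences are that you get $A$ directly from the sharp $3$-transitivity of $\PGL_2(\mathbb{C})$ where the paper cites an external lemma, and that you spell out the uniqueness of this curve through $p,q,r$ and the self-intersection count $2-4=-2$, which the paper leaves implicit.
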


\begin{proof}
We have $p=([a:b],[\overline{a}:\overline{b}]), q=([c:d],[\overline{c}:\overline{d}]), r=([e:f],[\overline{e}:\overline{f}])$ and $s=([g:h],[\overline{g}:\overline{h}])$ for some $a, b, c, d, e, f, g, h \in \mathbb{C}$, and $ad-bc \neq 0$, $ah-bg \neq 0$, $ch-dg \neq 0$ because $p, q, r, s$ are pairwise not on the same fibration of $\p^{1}_{\mathbb{C}} \times \p^{1}_{\mathbb{C}}$ by any projection. Then, thanks to \cite[Lemma 2.2.3]{boi25}, 
there exists an automorphism $\alpha \in \Aut_{\mathbb{R}}(Q_{3,1})$ such that $\alpha(p)=([1:0],[1:0]), \alpha(q)=([0:1],[0:1])$ and $\alpha(r)=([1:1],[1:1])$, and we may assume that $\alpha(s)=([\lambda:1],[\overline{\lambda}:1])$ with $\lambda \in \mathbb{C} \setminus \mathbb{R}$ by the hypothesis made on the four points.

Notice that if $\lambda=0$, the points $q$ and $s$ are on the same fibre, and so are the points $r$ and $s$ if $\lambda=1$; and if $\lambda \in \mathbb{R}\setminus\{0,1\}$ there is a real curve of bidegree $(1,1)$ passing through the four points whose equation is given by $\{u_{0}v_{1}-u_{1}v_{0}=0\}$ in $\p^{1}_{\mathbb{C},u_{0}u_{1}} \times \p^{1}_{\mathbb{C},v_{0}v_{1}}$.
\end{proof}

\begin{proposition}
Let $X \cong Q_{3,1}(4,0)$ be a real quartic del Pezzo surface.
\begin{enumerate}
\item The kernel $A_{0}$ of the morphism $ \rho : \Aut_{\mathbb{R}}(X) \rightarrow \Sym_{5} $ is isomorphic to $(\Z/2\Z)^{3}$, and it is generated by the elements $\gamma_{1}=(0,1,1,0,0), \gamma_{2}=(1,0,1,0,0)$ and $\gamma_{3}=(0,0,0,1,1)$.
\label{it:item_(1)_Proposition_Aut(X)_Q31(4,0)}
\item The image $A'$ of the morphism $\rho$ is either $\langle (123),(12)(45) \rangle$ or $\langle (12)(45) \rangle$ or $\lbrace \id \rbrace$. Moreover, $A'=\langle (123),(12)(45) \rangle \cong \Sym_{3}$ if and only if $\lambda=e^{\pm i \frac{\pi}{3}}$, $A'=\langle (12)(45) \rangle \cong \Z/2\Z$ if and only if $\lambda + \overline{\lambda}=1, \lambda \neq e^{\pm i \frac{\pi}{3}}$, and $A'=\lbrace \id \rbrace$ otherwise.
\label{it:item_(2)_Proposition_Aut(X)_Q31(4,0)}
\end{enumerate}
Furthermore, the elements $\gamma_{1}$ and $\gamma_{2}$ are realized in $A_{0}$ as the lifts of involutions of $Q_{3,1}$, and $\gamma_{3}$ as the lift of a birational involution of $Q_{3,1}$ with base-points $p, q, r, s$. The elements $\tau_{1}=(12)(45)$ and $\tau_{2}=(123)$ can be realized as the lifts on $X$ of an involution and an automorphism of order three of $Q_{3,1}$, respectively.
\label{Prop:Proposition_Aut(X)_Q31(4,0)}
\end{proposition}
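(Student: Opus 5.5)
The proof will follow the pattern of the two previous cases (Propositions \ref{Prop:Proposition_Aut(X)_Q31(0,2)} and \ref{Prop:Proposition_Aut(X)_Q31(2,1)}), working with the normal form of Lemma \ref{lem:Lemma_choice_of_points_Q31(4,0)}:
\[
p=([1:0],[1:0]),\quad q=([0:1],[0:1]),\quad r=([1:1],[1:1]),\quad s=([\lambda:1],[\overline{\lambda}:1]),\qquad \lambda\notin\mathbb{R}.
\]
The Galois action already gives the upper bounds: $A_{0}\subseteq\{a_{1}+a_{2}+a_{3}=0,\ a_{4}=a_{5}\}\cong(\Z/2\Z)^{3}$, and $A'\subseteq\Sym_{3}\times\Z/2\Z$. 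The proof therefore has two tasks: realize the three generators of $A_{0}$, and decide exactly which elements of $\Sym_{3}\times\Z/2\Z$ lie in $A'$.

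\textbf{Generators of $A_{0}$.} I would realize each generator explicitly.
\begin{itemize}
\item For $\gamma_{1}$ (swapping $p\leftrightarrow q$ and $r\leftrightarrow s$ while preserving $f,\overline{f}$), take $(A,\overline{A})$ with $A$ the unique Möbius involution of $\p^{1}_{\mathbb{C}}$ exchanging $0\leftrightarrow\infty$ and $1\leftrightarrow\lambda$, namely $A:x\mapsto \lambda/x$. Then $\overline{A}$ sends $\overline{\lambda}\leftrightarrow 1$, so $(A,\overline{A})\in\Aut_{\mathbb{R}}(Q_{3,1})$ by Proposition \ref{prop:Proposition_description_Aut(Q31)}. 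Its lift fixes $f$ and $\overline{f}$, fixes $\mathcal{R}_{1}$, and swaps the members of $\mathcal{R}_{2}$ and of $\mathcal{R}_{3}$.
\item For $\gamma_{2}$, take the Möbius involution exchanging $0\leftrightarrow 1$ and $\infty\leftrightarrow\lambda$; this always exists since the four values are distinct.
\item For $\gamma_{3}$, take the real birational involution of bidegree $(2,2)$ with base points $p,q,r,s$, exactly as $\phi$ in Proposition \ref{Prop:Proposition_Aut(X)_Q31(2,1)}, with the same Picard matrix. It is defined over $\mathbb{R}$ because its base locus is real.
\end{itemize}
Together these give $A_{0}\cong(\Z/2\Z)^{3}$.

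\textbf{The image $A'$.} Since $A_{0}$ is large, every coset of $A_{0}$ contains an element that preserves each of $f$, $\overline{f}$ and $f+\overline{f}-E_{x}-E_{y}$ individually or according to the permutation. Concretely, for each candidate $\pi\in A'$, composing with $A_{0}$ lets me assume the lift acts on the classes $f$, $\overline{f}$, $E_{p},\dots,E_{s}$ by a permutation; the alternative Picard actions either fail to lie in $\GL_{6}(\Z)$ or reduce to these, as in Case 2.

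An element acting on $\{\mathcal{R}_{1},\mathcal{R}_{2},\mathcal{R}_{3}\}$ by $\pi$ then comes from an automorphism of $Q_{3,1}$ permuting $\{p,q,r,s\}$ in the corresponding way, modulo the Klein group. An action that swaps $\mathcal{R}_{4}$ and $\mathcal{R}_{5}$ corresponds to such an automorphism composed with $\tau$, i.e. a map $(x,y)\mapsto(\overline{A}y,Ax)$.

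Real points have the form $(z,\overline{z})$, so the type $(x,y)\mapsto(Ax,\overline{A}y)$ acts on them via $z\mapsto Az$, and the type composed with $\tau$ acts via $z\mapsto \overline{A}\,\overline{z}$, which is antiholomorphic. A Möbius map preserves the cross-ratio $\lambda$ of $(0,\infty,1,\lambda)$ up to the $\Sym_{3}$-action on cross-ratios, whereas an anti-Möbius map conjugates it.

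\textbf{Case analysis.} Because $\lambda\notin\mathbb{R}$, no holomorphic map can realize a transposition of $\Sym_{3}$: that would require $\lambda\in\{1/\lambda,\,1-\lambda,\,\lambda/(\lambda-1)\}$, and each of these forces $\lambda$ real or degenerate. So transpositions must be paired with $(45)$. For $(12)(45)$, the condition becomes $\overline{\lambda}=1-\lambda$, i.e. $\lambda+\overline{\lambda}=1$, and the realizing map is an explicit anti-Möbius involution composed with $\tau$. For the 3-cycle $(123)$, the condition is $\lambda=1/(1-\lambda)$, i.e. $\lambda=e^{\pm i\pi/3}$, realized by a holomorphic order-$3$ automorphism. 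The pure swap $(45)$ would need $\overline{\lambda}=\lambda$, which is excluded.

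Combining these, and noting that $e^{\pm i\pi/3}$ satisfies $\lambda+\overline{\lambda}=1$ so that both conditions hold there, gives the three possibilities for $A'$.

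\textbf{Main obstacle.} The hardest step is the bookkeeping that shows every element of $A'$, after composing with $A_{0}$, really comes from an automorphism of $Q_{3,1}$ rather than a genuinely birational map. I would handle this by checking that the leftover Picard actions are non-integral, as in Case 2, and by carefully matching which labelling of the transpositions corresponds to the cross-ratio symmetry $\lambda\mapsto 1-\lambda$.
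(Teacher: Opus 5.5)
Your overall reduction is sound, and your cross-ratio formulation is a tidier version of the paper's case-by-case matrix computations. A real automorphism commutes with $\sigma$, so its flips on $\mathcal{R}_4$ and $\mathcal{R}_5$ agree. Hence the real lifts of each permutation form two cosets of $A_0$, one integral and one not. The integral one is represented by the lift of a real automorphism of $Q_{3,1}$ preserving $\{p,q,r,s\}$. Such an automorphism acts on $\p^1_{\mathbb{C}}$, preserving $\{\infty,0,1,\lambda\}$, as a M\"obius map (fixing $\mathcal{R}_4,\mathcal{R}_5$) or an anti-M\"obius map (swapping them).

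The gap is in your final case analysis of the coset $\Sym_3\times\{(45)\}$. You only treat $(12)(45)$. The elements $(13)(45)$ and $(23)(45)$ cannot be reduced to it, because relabelling the four points moves $\lambda$ within its anharmonic orbit. Your own criterion gives the following.
\begin{itemize}
\item $(23)(45)\in A'$ iff $\overline{\lambda}=1/\lambda$, i.e. $|\lambda|=1$. It is realised by the real involution $([u_0:u_1],[v_0:v_1])\mapsto([v_1:v_0],[u_1:u_0])$, i.e. $z\mapsto 1/\overline{z}$, which swaps $p,q$ and fixes $r,s$.
\item $(13)(45)\in A'$ iff $\overline{\lambda}=\lambda/(\lambda-1)$, i.e. $|\lambda-1|=1$. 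It is realised by $z\mapsto\overline{z}/(\overline{z}-1)$, which swaps $p,r$ and fixes $q,s$.
\end{itemize}
So for $\lambda=i$ one gets $A'=\langle(23)(45)\rangle$ although $\lambda+\overline{\lambda}=0$. The clause ``$A'=\{\id\}$ otherwise'' therefore cannot be proved as written. The paper's proof has the same omission, since it only checks $(12)(45)$.

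To close the argument you must do one of two things:
\begin{itemize}
\item list all three loci $\mathrm{Re}\,\lambda=\frac12$, $|\lambda-1|=1$ and $|\lambda|=1$, giving $A'=\langle(12)(45)\rangle$, $\langle(13)(45)\rangle$ and $\langle(23)(45)\rangle$ respectively (any two of these loci meet only at $e^{\pm i\pi/3}$); or
\item first use the freedom in labelling $p,q,r,s$ to normalise. For example, $\lambda\mapsto 1/(1-\lambda)$ carries $|\lambda|=1$ onto $\mathrm{Re}\,\lambda=\frac12$.
\end{itemize}

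There are also two smaller points in part (1).
\begin{itemize}
\item The M\"obius involution exchanging $0\leftrightarrow 1$ and $\infty\leftrightarrow\lambda$ swaps $q\leftrightarrow r$ and $p\leftrightarrow s$. So it realises $(1,1,0,0,0)=\gamma_1+\gamma_2$ rather than $\gamma_2$. This is harmless for generation; the paper uses $z\mapsto (z-\lambda)/(z-1)$, which swaps $p\leftrightarrow r$ and $q\leftrightarrow s$.
\item For $\gamma_3$, ``defined over $\mathbb{R}$ because its base locus is real'' is not a proof. The clean argument uses Proposition \ref{prop:Structure_Aut(X_C)}: there is a unique $g\in G_{X_{\mathbb{C}}}$ with $\gamma(g)=(0,0,0,1,1)$. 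Since $\sigma$ exchanges $\mathcal{R}_4$ and $\mathcal{R}_5$, one has $\gamma(\sigma g\sigma)=\gamma(g)$, hence $\sigma g\sigma=g$. The same argument shows at once that every vector with $a_4=a_5$ lies in $A_0$.
\end{itemize}
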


\begin{proof}
(\ref{it:item_(1)_Proposition_Aut(X)_Q31(4,0)}) We have to prove the existence of automorphisms of $X$ corresponding to the elements $\gamma_{1}, \gamma_{2}$ and $\gamma_{3}$ in $(\Z/2\Z)^{5}$.\\
Consider the real involution $\alpha_{1} : ([u_{0}:u_{1}],[v_{0}:v_{1}]) \mapsto ([\lambda u_{1}:u_{0}],[\overline{\lambda} v_{1}:v_{0}]) \in \Aut_{\mathbb{R}}(Q_{3,1})$ which preserves the fibrations $f, \overline{f}$ on $(Q_{3,1})_{\mathbb{C}} \cong \p^{1}_{\mathbb{C}} \times \p^{1}_{\mathbb{C}}$, and which interchanges both $p$ with $q$ and $r$ with $s$. It lifts by blowing up the points $p, q, r, s$ to an automorphism $\widehat{\alpha_{1}}$ of $X$ defined over $\mathbb{R}$ that is contained in the kernel of $\rho$ and whose action on the pairs $\mathcal{R}_{i}$'s is given by $\langle \gamma_{1}=(0,1,1,0,0) \rangle$. In the same way, consider the real automorphism $\alpha_{2} : ([u_{0}:u_{1}],[v_{0}:v_{1}]) \mapsto ([u_{0}-\lambda u_{1}:u_{0}-u_{1}],[v_{0}-\overline{\lambda} v_{1}:v_{0}-v_{1}]) \in \Aut_{\mathbb{R}}(Q_{3,1})$ which interchanges $p$ with $r$ and $q$ with $s$, and which preserves the two fibrations $f, \overline{f}$. It is an involution that lifts to an automorphism $\widehat{\alpha_{2}}$ of $X$ defined over $\mathbb{R}$ which is in the kernel of $\rho$ and whose action on the pairs of conic bundles is given by $\langle \gamma_{2}=(1,0,1,0,0) \rangle$.
Finally, consider the real birational involution $\phi_{3} : ([u_{0}:u_{1}],[v_{0}:v_{1}]) \dashmapsto ([A_{0}(u_{0},u_{1},v_{0},v_{1}):A_{1}(u_{0},u_{1},v_{0},v_{1})],[B_{0}(u_{0},u_{1},v_{0},v_{1}):B_{1}(u_{0},u_{1},v_{0},v_{1})])$ of $Q_{3,1}$, with
\begin{align*}
A_{0}(u_{0},u_{1},v_{0},v_{1}) &= \lambda v_{0}((\overline{\lambda}-1)u_{0}v_{1} + (\lambda - \overline{\lambda})u_{1}v_{1} + (1-\lambda)u_{1}v_{0}),\\
A_{1}(u_{0},u_{1},v_{0},v_{1}) &= v_{1}(\overline{\lambda}(\lambda-1)u_{0}v_{1} + \lambda(1-\overline{\lambda})u_{1}v_{0} + (\overline{\lambda}-\lambda)u_{0}v_{0}),\\
B_{0}(u_{0},u_{1},v_{0},v_{1}) &= -\overline{\lambda} u_{0}((\overline{\lambda}-1)u_{0}v_{1} + (\lambda-\overline{\lambda})u_{1}v_{1} + (1-\lambda)u_{1}v_{0}),\\
B_{1}(u_{0},u_{1},v_{0},v_{1}) &= -u_{1}(\overline{\lambda}(\lambda-1)u_{0}v_{1} + \lambda(1-\overline{\lambda})u_{1}v_{0} + (\overline{\lambda}-\lambda)u_{0}v_{0}),
\end{align*}
and whose base-points are $p, q, r$ and $s$. It lifts, by blowing up these four real points, to an automorphism $\widehat{\phi_{3}}$ of $X$, which is in the kernel of $\rho$ and whose action on the pairs $\mathcal{R}_{i}$'s corresponds to $\gamma_{3}=(0,0,0,1,1)$. This yields the claim of point \ref{it:item_(1)_Proposition_Aut(X)_Q31(4,0)}.

(\ref{it:item_(2)_Proposition_Aut(X)_Q31(4,0)}) Recall that the image $A'$ is a subgroup of $\langle (123),(12) \rangle \times \langle (45) \rangle \cong \D_{6}$ isomorphic to $\lbrace \id \rbrace, \Z/2\Z, \Z/3\Z$ or $\Sym_{3}$ (see list \eqref{list_of_possible_group_for_Im_Aut(X_C)}). We show that the elements $(12)$ and $(45)$ do not belong to the image, while $(123)$ does if and only if $\lambda=e^{\pm i \frac{\pi}{3}}$ and $(12)(45)$ does if and only if $\lambda+\overline{\lambda}=1$.

We start explaining why there is no automorphism of type $(12)$. If there were an automorphism $\alpha$ of $X$ exchanging the pair $\mathcal{R}_{1}$ with $\mathcal{R}_{2}$ then the action of $\alpha$ would be given either by \[ (1) \, \myinline{
}.\] We may assume that its action on the pairs $\mathcal{R}_{1}$ and $\mathcal{R}_{2}$ is as in (1) since we can compose an automorphism whose action is as in (2) with the element of $A_{0}$ that corresponds to $\gamma_{1}+\gamma_{2}=(0,1,1,0,0)+(1,0,1,0,0)=(1,1,0,0,0)$ to obtain an automorphism whose action is that in (1). On the pairs $\mathcal{R}_{4}$ and $\mathcal{R}_{5}$, the action of $\alpha$ is given either by \[ (3) \, \myinline{%
}.\] As before, we may assume that the action is as in (3) by composing an automorphism whose action is as in (4) with the element of $A_{0}$ corresponding to $\gamma_{3}=(0,0,0,1,1)$ to obtain an automorphism whose action is that in (3). Summarising, we have to study only two cases: \begin{center} %
 \end{center}
In both cases (a) and (b), $f$ and $\overline{f}$ are preserved by looking at the pairs $\mathcal{R}_{4}, \mathcal{R}_{5}$, and hence $f+\overline{f}$ is preserved and so are $f+2\overline{f}, 2f+\overline{f}$.\\ Let us show that case (a) does not exist. Looking at the pair $\mathcal{R}_{3}$ we see that $f+\overline{f}-E_{p}-E_{s}, f+\overline{f}-E_{q}-E_{r}$ are preserved, and then $E_{p}+E_{s}$ and $E_{q}+E_{r}$ are preserved, while the action on the pairs $\mathcal{R}_{1}$ and $\mathcal{R}_{2}$ gives that $\alpha$ interchanges $E_{p}+E_{q}$ with $E_{p}+E_{r}$ and $E_{r}+E_{s}$ with $E_{q}+E_{s}$. This implies that $E_{p}, E_{s}$ are fixed and $E_{q}, E_{r}$ are exchanged. So $\alpha$ comes from a real automorphism $\alpha'$ of $Q_{3,1}$ which fixes $p$ and $s$, interchanges $q$ with $r$, and preserves the fibrations $f, \overline{f}$ over $\p^{1}_{\mathbb{C}} \times \p^{1}_{\mathbb{C}}$. Let us see that such an $\alpha'$ does not exist. The automorphism $\alpha'$ is given by $(x,y) \mapsto (Ax,\overline{A}y)$ where $A \in \PGL_{2}(\mathbb{C})$ with $\alpha'(p)=p$, $\alpha':q \leftrightarrow r$ and $\alpha'(s)=s$. Then, under the choice of the points $p=([1:0],[1:0]), q=([0:1],[0:1]), r=([1:1],[1:1])$ and $s=([\lambda:1],[\overline{\lambda}:1])$ for some $\lambda \in \mathbb{C} \setminus \mathbb{R}$ (Lemma \ref{lem:Lemma_choice_of_points_Q31(4,0)}), we have $A=
$ and $\lambda=\dfrac{1}{2}$, which gives a contradiction on the parameter $\lambda$. So an automorphism of $X$ of type (a) does not exist.\\ In case (b), $\alpha$ is not even an automorphism of the Picard group because the matrix corresponding to an action described in (b) with respect to the basis $\{f,\overline{f},E_{p},E_{q},E_{r},E_{s}\}$ is \small\[ %
 \notin \GL_{6}(\mathbb{Z}).\]

Now, we prove that automorphisms of type $(45)$ are not in the image of $\rho$ and we proceed in the same way as we did for automorphisms of type $(12)$. The action of an automorphism of $X$ of type $(45)$ on the pairs $\mathcal{R}_{4}$ and $\mathcal{R}_{5}$ is given either by \[ (1) \, \myinline{%
}.\] Composing with the element of $A_{0}$ that corresponds to $\gamma_{3}=(0,0,0,1,1)$, we may assume that the action is as in (1). With respect to the action on the first three pairs $\mathcal{R}_{1}$, $\mathcal{R}_{2}$ and $\mathcal{R}_{3}$, we assume that the action on $\mathcal{R}_{1}$ and $\mathcal{R}_{3}$ is the identity since we can compose with automorphisms corresponding to $\gamma_{1}=(0,1,1,0,0)$ or to $\gamma_{1}+\gamma_{2}=(1,1,0,0,0)$. Then, we have two cases to focus on: \begin{center}
%
\end{center} 
Case (a) corresponds to an automorphism which interchanges $f$ with $\overline{f}$ and fixes $E_{p}$, $E_{q}$, $E_{r}$ and $E_{s}$. It would be the lift of an automorphism of $Q_{3,1}$ fixing the four points $p$, $q$, $r$ and $s$, which does not exist. Case (b) is not even an automorphism of the Picard group because the matrix corresponding to its action is \small\[ %
 \notin \GL_{6}(\mathbb{Z}).\]

Then, we show that there is an automorphism which acts as $(123)$ if and only if $\lambda=e^{\pm i \frac{\pi}{3}}$. If there is an automorphism $\alpha$ of $X$ acting as the cycle $(123)$ on the five pairs of conic bundles, then by composing with the elements of $A_{0}$ corresponding to $\gamma_{1}=(0,1,1,0,0)$, $\gamma_{2}=(1,0,1,0,0)$ and $\gamma_{1}+\gamma_{2}=(1,1,0,0,0)$, we may assume that its action on the pairs $\mathcal{R}_{1}, \mathcal{R}_{2}, \mathcal{R}_{3}$ is given by \[ (1) \, \myinline{%
}. \]
On the pairs $\mathcal{R}_{4}$ and $\mathcal{R}_{5}$ the action of $\alpha$ is given either by \[ (3) \, \myinline{%
}. \] Again, we may assume that its action is as in (3) since we can compose an automorphism whose action is as in (4) with the element of $A_{0}$ corresponding to $\gamma_{3}=(0,0,0,1,1)$ to obtain an automorphism whose action is that in (3). Summarising, we have to focus on the following two cases: \begin{center}
%
 \end{center}
In both cases (a) and (b), $f$ and $\overline{f}$ are preserved by looking at pairs $\mathcal{R}_{4}$ and $\mathcal{R}_{5}$, and hence $f+\overline{f}$ is preserved. In case (a), looking at pairs $\mathcal{R}_{1}, \mathcal{R}_{2}, \mathcal{R}_{3}$, we see that $E_{p}+E_{q} \mapsto E_{p}+E_{r} \mapsto E_{p}+E_{s} \mapsto E_{p}+E_{q}$ and $E_{r}+E_{s} \mapsto E_{q}+E_{s} \mapsto E_{q}+E_{r} \mapsto E_{r}+E_{s}$. This implies that $E_{p}$ is fixed and that $E_{q} \mapsto E_{r} \mapsto E_{s} \mapsto E_{q}$. So an automorphism $\alpha$ of type $(123)$ in case (a) comes from an automorphism  $\alpha'$ of $\p^{1}_{\mathbb{C}} \times \p^{1}_{\mathbb{C}}$ defined over $\mathbb{R}$ preserving the two rulings, fixing $p=([1:0],[1:0])$ and permuting $q=([0:1],[0:1])$, $r=([1:1],[1:1])$, $s=([\lambda:1],[\overline{\lambda}:1])$ cyclically. The automorphism $\alpha'$ is then given by $(x,y) \mapsto (Ax,\overline{A}y)$ where $A \in \PGL_{2}(\mathbb{C})$ is satisfying $$A 
.$$ This implies that $$A=%
$$ with $\lambda$ satisfying the relation $\lambda^{2}-\lambda+1=0$, and this proves that an automorphism of type (a) exists if and only if $\lambda=e^{\pm i \frac{\pi}{3}}$. An automorphism of $X$ in case (b) does not exist since the matrix of its action on the Picard group with respect to the basis $\lbrace f,\overline{f},E_{p},E_{q},E_{r},E_{s} \rbrace$ is \small\[ %
 \notin \GL_{6}(\mathbb{Z}).\]

Finally, we check that there is an automorphism of $X$ which acts as $(12)(45)$ if and only if $\lambda+\overline{\lambda}=1$. As before, we can see that the actions of automorphisms of type $(12)(45)$ are, up to composition with an element of $A_{0}$ corresponding to $\gamma_{1}+\gamma_{2}=(1,1,0,0,0)$ or to $\gamma_{3}=(0,0,0,1,1)$, of the form
\begin{center} %
 \end{center}
In both cases (a) and (b), looking at the pairs $\mathcal{R}_{4}$ and $\mathcal{R}_{5}$ we see that $f$ and $\overline{f}$ are exchanged, and so $f+\overline{f}$ is preserved. Looking at pairs $\mathcal{R}_{1}, \mathcal{R}_{2}$, we see that $f+\overline{f}-E_{p}-E_{q}$ is exchanged with $f+\overline{f}-E_{p}-E_{r}$, and so are $f+\overline{f}-E_{r}-E_{s}$ and $f+\overline{f}-E_{q}-E_{s}$. This implies that $E_{p}+E_{q}$ and $E_{p}+E_{r}$ are exchanged, and so are $E_{r}+E_{s}$ and $E_{q}+E_{s}$. In case (a), looking now at the pair $\mathcal{R}_{3}$ we see that $f+\overline{f}-E_{p}-E_{s}$ and $f+\overline{f}-E_{q}-E_{r}$ are preserved, so $E_{p}+E_{s}$ and $E_{q}+E_{r}$ are preserved. This second point implies that $E_{p}, E_{s}$ are preserved and that $E_{q}$ and $E_{r}$ are exchanged. So an automorphism of type $(12)(45)$ in case (a) comes from a (real) automorphism $\alpha'$ of $Q_{3,1}$ exchanging the two fibrations of $\p^{1}_{\mathbb{C}} \times \p^{1}_{\mathbb{C}}$, fixing the points $p$ and $s$ and exchanging $q$ with $r$. So $\alpha'$ is given by $\alpha' : (x,y) \mapsto (\overline{A}y,Ax)$, where $A \in \PGL_{2}(\mathbb{C})$ is satisfying $$A \begin{bmatrix} 1 \\ 0 \end{bmatrix} = \begin{bmatrix} 1 \\ 0 \end{bmatrix}, \, A \begin{bmatrix} 0 \\ 1 \end{bmatrix} = \begin{bmatrix} 1 \\ 1 \end{bmatrix}, \, A \begin{bmatrix} 1 \\ 1 \end{bmatrix} = \begin{bmatrix} 0 \\ 1 \end{bmatrix}.$$ This implies that $$A = \begin{bmatrix} -1 & 1 \\ 0 & 1 \end{bmatrix}.$$ Since $\alpha'$ fixes $s$, then $$\begin{bmatrix} -1 & 1 \\ 0 & 1 \end{bmatrix} \begin{bmatrix} \lambda \\ 1 \end{bmatrix} = \begin{bmatrix} \overline{\lambda} \\ 1 \end{bmatrix} = \begin{bmatrix} 1-\lambda \\ 1 \end{bmatrix},$$ and hence $\lambda + \overline{\lambda}=1$. Therefore an automorphism of type (a) exists if and only if $\lambda + \overline{\lambda} = 1$, and it is given by the lift of the involution $ \alpha' : ([u_{0}:u_{1}],[v_{0}:v_{1}]) \mapsto ([v_{0}-v_{1}:-v_{1}],[u_{0}-u_{1}:-u_{1}])$ of $Q_{3,1}$. Case (b) is not possible because the matrix of the action described in (b) on the Picard group of $X$ with respect to the basis $\lbrace f,\overline{f},E_{p},E_{q},E_{r},E_{s} \rbrace$ is \small\[ \begin{pmatrix} 0 & 1 & 0 & 0 & 0 & 0 \\ 1 & 0 & 0 & 0 & 0 & 0 \\ 0 & 0 & \frac{1}{2} & \frac{1}{2} & \frac{1}{2} & -\frac{1}{2} \\ 0 & 0 & \frac{1}{2} & -\frac{1}{2} & \frac{1}{2} & \frac{1}{2} \\ 0 & 0 & \frac{1}{2} & \frac{1}{2} & -\frac{1}{2} & \frac{1}{2} \\ 0 & 0 & -\frac{1}{2} & \frac{1}{2} & \frac{1}{2} & \frac{1}{2} \end{pmatrix} \notin \GL_{6}(\mathbb{Z}),\] which shows that it is not even an automorphism of $\Pic(X)$. 
\end{proof}

\begin{remark} In this case, and from the description of families of automorphisms in $\Aut_{\mathbb{R}}(X)$ given in Proposition \ref{Prop:Proposition_Aut(X)_Q31(4,0)}, one can see that families of quartic del Pezzo surfaces $X$ of type $Q_{3,1}(4,0)$ are such that the action of $\Gal(\mathbb{C}/\mathbb{R})$ on $\Pic(X)$ (see Figure \ref{Fig:Figure_3_Galois_action_on_conic_bundles_Q31(4,0)}) is not realized by an automorphism of the surface.
\end{remark}

\subsubsection{The del Pezzo surfaces obtained by blowing up $Q_{2,2}$}

\subsubsection{Case 4: $X \cong Q_{2,2}(0,2)$} Let $X \cong Q_{2,2}(0,2)$ be the blow-up of $Q_{2,2} \cong \p^{1}_{\mathbb{R}} \times \p^{1}_{\mathbb{R}}$ at two pairs of complex conjugate points $\lbrace p,\overline{p} \rbrace$, $\lbrace q,\overline{q} \rbrace$.\\ There are sixteen $(-1)$-curves on $X_{\mathbb{C}}$: the exceptional divisors above the blown up points denoted by $E_{p}, E_{\overline{p}}, E_{q}, E_{\overline{q}}$; the strict transforms of the fibres $f_{1}, f_{2}$ passing through one of the four points that we denote by $f_{1p}, f_{1q}, f_{2p}, f_{2q}, f_{1\overline{p}}, f_{1\overline{q}}, f_{2\overline{p}}, f_{2\overline{q}}$; and the strict transforms of the curves of bidegree (1,1) of $\p^{1}_{\mathbb{C}} \times \p^{1}_{\mathbb{C}}$ (i.e. linearly equivalent to $f_{1}+f_{2}$) passing through three of the four points and denoted by $f_{p\overline{p}q}, f_{p\overline{p}\overline{q}}, f_{pq\overline{q}}, f_{\overline{p}q\overline{q}}$. These $(-1)$-curves form the singular fibres of the ten conic bundle structures on $X_{\mathbb{C}}$ with four singular fibres each. These conic bundles are given by the linear systems associated to the following classes of divisors:\\
\begin{tabular}{p{6cm}p{7cm}}
{\begin{align}
f_{1}+f_{2}-E_{p}-E_{\overline{p}}  \\  f_{1}+f_{2}-E_{p}-E_{q}  \\  f_{1}+f_{2}-E_{p}-E_{\overline{q}}  \\  f_{1}+f_{2}-E_{q}-E_{\overline{q}}  \\  f_{1}+f_{2}-E_{\overline{p}}-E_{\overline{q}}   
\end{align}}
&%
{\begin{align}
f_{1}+f_{2}-E_{\overline{p}}-E_{q}  \\  f_{1} \\  f_{2}  \\  f_{1}+2f_{2}-E_{p}-E_{\overline{p}}-E_{q}-E_{\overline{q}}  \\  2f_{1}+f_{2}-E_{p}-E_{\overline{p}}-E_{q}-E_{\overline{q}}
\end{align}}
\end{tabular}\\The anticanonical divisor class of $X$ is $-K_{X}=2f_{1}+2f_{2}-E_{p}-E_{\overline{p}}-E_{q}-E_{\overline{q}}$. We collect again these conic bundles in pairs $\mathcal{R}_{i}=\lbrace \mathcal{C}_{i},\mathcal{C}'_{i} \rbrace$, such that $\mathcal{C}_{i}+\mathcal{C}'_{i}=-K_{X}$ for $i=1,\dots,5$. One has
\begin{align*}
\mathcal{R}_{1} &= \lbrace f_{1}+f_{2}-E_{p}-E_{\overline{p}} , f_{1}+f_{2}-E_{q}-E_{\overline{q}} \rbrace,\\
\mathcal{R}_{2} &= \lbrace f_{1} , f_{1}+2f_{2}-E_{p}-E_{\overline{p}}-E_{q}-E_{\overline{q}} \rbrace,\\
\mathcal{R}_{3} &= \lbrace f_{2} , 2f_{1}+f_{2}-E_{p}-E_{\overline{p}}-E_{q}-E_{\overline{q}} \rbrace,\\
\mathcal{R}_{4} &= \lbrace f_{1}+f_{2}-E_{p}-E_{q} , f_{1}+f_{2}-E_{\overline{p}}-E_{\overline{q}} \rbrace,\\
\mathcal{R}_{5} &= \lbrace f_{1}+f_{2}-E_{p}-E_{\overline{q}} , f_{1}+f_{2}-E_{\overline{p}}-E_{q} \rbrace.
\end{align*}
The action of the standard anti-holomorphic involution on the above five pairs is then represented with arrows in Figure \ref{Fig:Figure_4_Galois_action_on_conic_bundles_Q22(0,2)} below.

\begin{figure}[h]
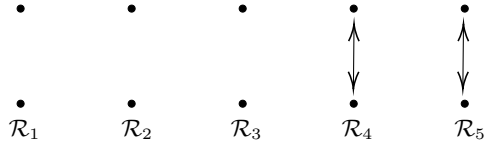

\centering

\caption{The action of $\Gal(\mathbb{C}/\mathbb{R})$ on the five pairs of conic bundles.}
\label{Fig:Figure_4_Galois_action_on_conic_bundles_Q22(0,2)}
\end{figure} As a consequence of the given Galois action (Figure \ref{Fig:Figure_4_Galois_action_on_conic_bundles_Q22(0,2)}) and as stated in \cite[Proposition 6.8]{yas22}, we see that $A_{0}$ lies inside the group $\langle \gamma_{1}=(1,0,1,0,0),\gamma_{2}=(0,0,1,1,0),\gamma_{3}=(0,0,1,0,1),\gamma=(0,1,1,0,0) \rangle \cong (\Z/2\Z)^{4} \subset (\Z/2\Z)^{5}$ and that $A'$ is a subgroup of $\langle (123),(12) \rangle \times \langle (45) \rangle \cong \Sym_{3} \times \Z/2\Z \cong \D_{6}$ isomorphic to $\lbrace \id \rbrace$, $\Z/2\Z$, $\Z/3\Z$ or $\Sym_{3}$ (see list \eqref{list_of_possible_group_for_Im_Aut(X_C)}).
Let us go a little further by describing geometrically the families of automorphisms in $\Aut_{\mathbb{R}}(X)$.

\begin{lemma}
Let $p, q \in \p^{1}_{\mathbb{C}} \times \p^{1}_{\mathbb{C}} \cong (Q_{2,2})_{\mathbb{C}}$ be two distinct imaginary non conjugate points such that the blow-up of $p, \overline{p}, q, \overline{q}$ is a real del Pezzo surface. Then up to automorphisms of $\p^{1}_{\mathbb{R}} \times \p^{1}_{\mathbb{R}}$, the points $p$ and $q$ can be chosen to be $p=([1:i],[1:i])$ and $q=([1:k_{1}i],[1:k_{2}i])$ for some distinct $k_{1}, k_{2} \in \, ]0,1[$, respectively.
\label{lem:Lemma_choice_of_points_Q22(0,2)} 
\end{lemma}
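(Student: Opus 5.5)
The plan is to normalize the two coordinates of $p$ and then of $q$ separately, using that $\Aut_{\mathbb{R}}(\p^{1}_{\mathbb{R}} \times \p^{1}_{\mathbb{R}})$ contains $\PGL_{2}(\mathbb{R}) \times \PGL_{2}(\mathbb{R})$ acting factorwise, together with the swap $\tau$. Throughout I write a point of $\p^{1}_{\mathbb{C}}$ as $[1:z]$ and work with the affine coordinate $z$.

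\emph{Step 1: normalize $p$.} Write $p=([1:a],[1:b])$. Because the blow-up is del Pezzo, $p$ and $\overline{p}$ do not lie on a common fibre, so $a,b \notin \mathbb{R}\cup\{\infty\}$. Interchanging the names $p$ and $\overline{p}$, and applying $z\mapsto -z$ on the second factor if needed, I may assume $a,b$ both lie in the upper half-plane $\mathbb{H}$. Since $\PGL_{2}(\mathbb{R})$ acts transitively on $\mathbb{H}$, I then choose $A_{1},A_{2}\in\PGL_{2}(\mathbb{R})$ with $A_{1}(a)=A_{2}(b)=i$. This gives $p=([1:i],[1:i])$, and hence $\overline{p}=([1:-i],[1:-i])$.

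\emph{Step 2: normalize $q$ with the stabilizer of $p$.} The elements of $\PGL_{2}(\mathbb{R})$ fixing $i$ are the rotations $\mathrm{SO}_{2}(\mathbb{R})/\{\pm1\}$. Under the Cayley coordinate $w=(z-i)/(z+i)$ they act by $w\mapsto e^{i\theta}w$, and they fix both $i$ and $-i$. Write $q=([1:c],[1:d])$. As before $c,d\notin\mathbb{R}$, and $c\neq \pm i$, $d\neq\pm i$, since otherwise $q$ or $\overline{q}$ would share a fibre with $p$ or $\overline{p}$. I rotate $w(c)$ and $w(d)$ independently onto the negative real axis. A point $z=ki$ with $k\in\mathbb{R}$ has $w=(k-1)/(k+1)$, and:
\begin{itemize}
\item if $c\in\mathbb{H}$, then $|w|<1$, $w\neq 0$, so $c$ moves to $k_{1}i$ with $k_{1}\in\,]0,1[$;
\item if $c$ is in the lower half-plane, then $|w|>1$, so $c$ moves to $k_{1}i$ with $k_{1}\in\,]-1,0[$.
\end{itemize}
The same holds for $d$ and $k_{2}$. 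If $c,d$ are both in the lower half-plane, I replace $q$ by $\overline{q}$; equivalently, I apply $(z\mapsto -z,\,z\mapsto -z)$ and rename $p\leftrightarrow\overline{p}$. Either way both signs flip and $k_{1},k_{2}\in\,]0,1[$.

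\emph{Step 3: distinctness.} If $k_{1}=k_{2}$, then $p,\overline{p},q,\overline{q}$ all lie on the diagonal $\{u_{0}v_{1}-u_{1}v_{0}=0\}$, a real curve of bidegree $(1,1)$. Its strict transform would be a $(-2)$-curve, contradicting the del Pezzo hypothesis, so $k_{1}\neq k_{2}$.

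\emph{Main obstacle.} The hard step is the sign bookkeeping in Step 2. For a non-real point $x=([1:a],[1:b])$, whether $a,b$ lie in the same half-plane or in opposite ones is invariant under the operations used: factorwise $\PGL_{2}(\mathbb{R})$ moves (elements of determinant $<0$ swap the half-planes), $\tau$, conjugation, and renaming $p\leftrightarrow q$. The count of how many of the two conjugate pairs are of ``same half-plane'' type is therefore an invariant. The configuration in which exactly one pair is of this type would leave $k_{1}>0>k_{2}$. I would first test whether such a configuration is excluded by the del Pezzo condition, for instance by forcing an extra $(1,1)$-curve through the four points or a coincidence of fibres. If it is not excluded, the normal form must be enlarged to $k_{1},k_{2}\in\,]-1,1[\setminus\{0\}$, or this case must be recorded as a separate family. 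The rest of the argument is routine.
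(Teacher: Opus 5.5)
Your Steps 1--3 are correct, and in the case they cover they are more careful than the paper's argument. The paper simply asserts $p=([1:i],[1:i])$, writes a quadratic in $a$ that should be the homogeneous equation $\alpha a^{2}+(\alpha^{2}+\beta^{2}-1)ab-\alpha b^{2}=0$, and never checks $k_{1}\neq k_{2}$.

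\textbf{The case you leave open is a counterexample.} It is not excluded by the del Pezzo hypothesis, so the lemma is false as stated. Take $p=([1:i],[1:i])$ and $q=([1:\tfrac{i}{2}],[1:-\tfrac{i}{2}])$.
\begin{itemize}
\item In the affine coordinates $(z,w)$, the first coordinates $\pm i,\pm\tfrac{i}{2}$ of the four points are pairwise distinct, and so are the second coordinates $\pm i,\mp\tfrac{i}{2}$.
\item The $(1,1)$-curves through $p,\overline{p}$ are $\lambda(zw+1)+\mu(z-w)=0$. They contain $q$ and $\overline{q}$ only if $\tfrac{5}{4}\lambda\pm i\mu=0$, i.e.\ $\lambda=\mu=0$.
\item Hence the blow-up is a real quartic del Pezzo surface.
\end{itemize}
Now put $\varepsilon(x)=\operatorname{sgn}(\operatorname{Im}a)\operatorname{sgn}(\operatorname{Im}b)$ for a point $x=([1:a],[1:b])$ with $a,b\notin\mathbb{R}$. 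Then $\varepsilon(\overline{x})=\varepsilon(\tau(x))=\varepsilon(x)$, and $\varepsilon((A,B)(x))=\operatorname{sgn}(\det A\det B)\,\varepsilon(x)$ for $(A,B)\in\PGL_{2}(\mathbb{R})^{2}$. So $\varepsilon(p)\varepsilon(q)$ is unchanged by $\Aut_{\mathbb{R}}(Q_{2,2})$ and by all relabellings of the two conjugate pairs. It equals $-1$ in this example and $+1$ for every configuration of the stated form, so no proof can reach the normal form here.

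The statement has to be enlarged exactly as you suggest, to $k_{1}\in\,]0,1[$ and $k_{2}\in\,]-1,1[\,\setminus\{0,k_{1}\}$. Your argument proves that version, using $\tau$ to put the positive parameter first.

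One correction to your bookkeeping: the single-point type $\varepsilon(x)$ is \emph{not} invariant. An element $(A,\id)$ with $\det A<0$ flips it for every point, and you use such a move yourself in Step 1. Only the product $\varepsilon(p)\varepsilon(q)$ is invariant, i.e.\ the parity of your count, not the count itself.

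\textbf{The paper's proof breaks at the same place.} After fixing $p$, it only uses the rotations $\bigl(\begin{smallmatrix} a&b\\ -b&a\end{smallmatrix}\bigr)$, which preserve the sign of the imaginary part on each factor. So ``the same reasoning applies for $c,d,k_{2}$'' produces $\widehat{k_{2}}\in\,]0,1[$ only when $\delta$ and $\beta$ have the same sign.

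If what is needed later is a normal form for the surface $X$ rather than for the given points, it can be recovered by changing the contraction $X\to Q_{2,2}$. In the example, take the real birational map $(z,w)\dashmapsto\bigl(z,\tfrac{w-z}{1+zw}\bigr)$, which blows up $p,\overline{p}$ and contracts the fibres $z=\pm i$. It lifts to an isomorphism of the blow-ups and sends the four points to $([1:\pm i],[1:\mp i])$ and $([1:\pm\tfrac{i}{2}],[1:\mp\tfrac{4i}{5}])$. After $w\mapsto -w$ this is the stated form with $(k_{1},k_{2})=(\tfrac12,\tfrac45)$. That, however, is a different statement from the lemma.
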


\begin{proof}
We may assume that $p=([1:i],[1:i])$ and $\overline{p}=([1:-i],[1:-i])$. We then consider $(A,B)=\left( \begin{pmatrix} a & b \\ -b & a \end{pmatrix} , \begin{pmatrix} c & d \\ -d & c \end{pmatrix} \right) \in \PGL_{2}(\mathbb{R}) \times \PGL_{2}(\mathbb{R})$ where $a, b, c, d \in \mathbb{R}^{*}$. It defines an automorphism of $Q_{2,2} \cong \p^{1}_{\mathbb{R}} \times \p^{1}_{\mathbb{R}}$ that fixes the points $p$ and $\overline{p}$. Now we can assume that $q=([1:\alpha+i\beta],[1:\gamma+i\delta])$ with $\alpha, \beta, \gamma, \delta \in \mathbb{R}^{*}$. We need to find $a, b, c, d \in \mathbb{R}^{*}$ such that $(A,B)$ sends the point $q$ onto the point $([1:k_{1}i],[1:k_{2}i])$, where $k_{1}, k_{2}$ are two distinct real parameters to identify, that is such that $$A \begin{bmatrix} 1 \\ \alpha + i \beta \end{bmatrix} = \begin{bmatrix}
1 \\ k_{1}i \end{bmatrix} \,\,\, \text{and} \,\,\, B \begin{bmatrix} 1 \\ \gamma + i \delta \end{bmatrix} = \begin{bmatrix} 1 \\ k_{2}i \end{bmatrix}.$$ The first condition (on $a, b, k_{1}$) implies (1): $\beta a = k_{1}(a+\alpha b)$ and (2) : $-b+\alpha a = -\beta b k_{1}$, which gives (1) : $k_{1}=\frac{\beta a}{a+\alpha b}$ and then the quadratic equation in $a$, (3) : $\alpha a^{2}+(-b+\alpha^{2}b+\beta^{2}b^{2})a-\alpha \beta^{2} = 0$. For a choice of $b\ggg 0$ the discriminant of (3) is positive, so that we have a real root $a(b,\alpha,\beta)$ and such that $0 < \widehat{k_{1}}:=\frac{\beta a(b,\alpha,\beta)}{a(b,\alpha,\beta)+\alpha b} < 1$. Consequently, the relations (1) and (2) : $-b+\alpha a(b,\alpha,\beta) = \frac{-b \beta^{2} a(b,\alpha,\beta)}{a(b,\alpha,\beta)+\alpha b} = -b \beta \widehat{k_{1}}$ are satisfied. The same reasoning applies for $c, d, k_{2}$, so that we have found an automorphism of $\p^{1}_{\mathbb{R}} \times \p^{1}_{\mathbb{R}}$ fixing $p, \overline{p}$, and sending $q, \overline{q}$ onto $([1:\widehat{k_{1}}i],[1:\widehat{k_{2}}i]), ([1:-\widehat{k_{1}}i],[1:-\widehat{k_{2}}i])$, respectively, where $\widehat{k_{1}}, \widehat{k_{2}} \in \, ]0,1[$.    
\end{proof}

\begin{proposition}
Let $X \cong Q_{2,2}(0,2)$ be a real del Pezzo surface of degree $4$.
\begin{enumerate}
\item The kernel $A_{0}$ of the homomorphism $\rho : \Aut_{\mathbb{R}}(X) \rightarrow \Sym_{5}$ is isomorphic to $(\Z/2\Z)^{4}$, and it is generated by the elements $\gamma_{1}=(1,0,1,0,0), \gamma_{2}=(0,0,1,1,0), \gamma_{3}=(0,0,1,0,1)$ and $\gamma=(0,1,1,0,0)$.
\label{it:item_(1)_Proposition_Aut(X)_Q22(0,2)}
\item The image $A'$ of the homomorphism $\rho$ is either $\langle (12)(45) \rangle \cong \Z/2\Z$ or $\lbrace \id \rbrace$. Moreover, the former happens if and only if $k_{1}^{2}k_{2}-2k_{1}+k_{2}=0$.
\label{it:item_(2)_Proposition_Aut(X)_Q22(0,2)}  
\end{enumerate}
Furthermore, the elements $\gamma_{1}, \gamma_{2}, \gamma_{3}$ and $\gamma$ are realized in $A_{0}$ as the lifts of some birational involutions of $Q_{2,2}$ with base-points $p, \overline{p}, q, \overline{q}$. The element $\tau=(12)(45)$ can be realized as the lift on $X$ of a birational involution of $Q_{2,2}$ with base-points $p, \overline{p}$. 
\label{Prop:Proposition_Aut(X)_Q22(0,2)}
\end{proposition}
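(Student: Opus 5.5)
We follow the template of Propositions \ref{Prop:Proposition_Aut(X)_Q31(2,1)} and \ref{Prop:Proposition_Aut(X)_Q31(4,0)}, with the points normalized as in Lemma \ref{lem:Lemma_choice_of_points_Q22(0,2)}: $p=([1:i],[1:i])$ and $q=([1:k_{1}i],[1:k_{2}i])$.

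For (\ref{it:item_(1)_Proposition_Aut(X)_Q22(0,2)}), the Galois action of Figure \ref{Fig:Figure_4_Galois_action_on_conic_bundles_Q22(0,2)} already shows that $A_{0}$ lies in the group $\langle\gamma_{1},\gamma_{2},\gamma_{3},\gamma\rangle\cong(\Z/2\Z)^{4}$. It therefore suffices to realize each generator by a real automorphism of $X$.

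\textbf{Step 1: translate each generator into an action on $\Pic(X)$.} Each element of $A_{0}$ fixes every pair $\mathcal{R}_{i}$. Its action on $\Pic(X)$ is thus determined, up to sign choices, by which pairs it swaps. For each of $\gamma_{1},\gamma_{2},\gamma_{3},\gamma$ we write down the corresponding integral matrix in the basis $\{f_{1},f_{2},E_{p},E_{\overline{p}},E_{q},E_{\overline{q}}\}$. Two examples:
\begin{itemize}[label={}]
\item $\gamma=(0,1,1,0,0)$ swaps $f_{1}$ with $f_{1}+2f_{2}-\sum E$ and $f_{2}$ with $2f_{1}+f_{2}-\sum E$. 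It is therefore of the same type as the involution $\phi_{3}$ of Case 1, of bidegree $(2,2)$ type, with four base-points.
\item $\gamma_{1}=(1,0,1,0,0)$ swaps $f_{2}$ with $2f_{1}+f_{2}-\sum E$ and exchanges $f_{1}+f_{2}-E_{p}-E_{\overline{p}}$ with $f_{1}+f_{2}-E_{q}-E_{\overline{q}}$.
\end{itemize}
In each case the matrix sends $E_{p},E_{\overline{p}},E_{q},E_{\overline{q}}$ to strict transforms of fibres or $(1,1)$-curves. Hence each such automorphism is the lift of a birational involution of $Q_{2,2}$ whose base-points are $p,\overline{p},q,\overline{q}$; this is consistent with the statement, since no biregular automorphism of $\p^{1}\times\p^{1}$ can realize these elements.

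\textbf{Step 2: construct the involutions explicitly.} For $\gamma_{1}$, for example, the linear system $|f_{1}+2f_{2}-E_{p}-E_{\overline{p}}-E_{q}-E_{\overline{q}}|$ should give a real map. We look for the maps in the form $([A_{0}:A_{1}],[B_{0}:B_{1}])$ with $A_{j},B_{j}$ bihomogeneous of the prescribed bidegrees vanishing at the four points. We then check that the coefficients can be chosen real, which holds because the base locus is $\sigma$-invariant and $\sigma$ commutes with the prescribed lattice action. The real structure here is the standard one, so this amounts to a real linear system with one-dimensional solution space. Finally we check that each map is an involution. This gives (\ref{it:item_(1)_Proposition_Aut(X)_Q22(0,2)}).

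For (\ref{it:item_(2)_Proposition_Aut(X)_Q22(0,2)}) we go through the elements of $\langle(123),(12)\rangle\times\langle(45)\rangle$ one conjugacy type at a time, each time normalizing by elements of $A_{0}$ to reduce to two candidate actions (a) and (b), as in the previous cases.

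\textbf{Step 3: the elements that do not occur.} Candidates of type (b) are eliminated because their matrices are not in $\GL_{6}(\Z)$. For $(45)$, $(12)$, $(123)$, $(12)(45)\cdot(123)$ and so on, candidates of type (a) reduce to automorphisms or birational maps of $Q_{2,2}$ with prescribed behaviour on $p,\overline{p},q,\overline{q}$. These either contradict the form of the points (for instance a real automorphism would have to fix $p$ and send $q$ to $\overline{q}$) or violate the constraints $k_{1}\neq k_{2}$, $k_{i}\in\,]0,1[$.

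\textbf{Step 4: the element $(12)(45)$.} This exchanges $\mathcal{R}_{1}$ with $\mathcal{R}_{2}$ and $\mathcal{R}_{4}$ with $\mathcal{R}_{5}$. After normalization its lattice action sends $f_{1}+f_{2}-E_{p}-E_{\overline{p}}$ to $f_{1}$, so it contracts the fibre curves through $p,\overline{p}$ and fixes the classes of $q,\overline{q}$ up to swapping. It should therefore come from a real birational involution of $Q_{2,2}$ with base-points $p,\overline{p}$ only, of the type of a quadratic map on $\p^{1}\times\p^{1}$. We write the general such involution with base-points $([1:\pm i],[1:\pm i])$; this is a real family, essentially $\PGL_{2}$-twisted versions of $(x,y)\mapsto(y,\,\cdot\,)$ built from $xy+1$ and $x-y$. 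We then impose that it maps $q$ to $q$ or to $\overline{q}$ as dictated by (a). Solving the resulting equations in $k_{1},k_{2}$ should yield exactly $k_{1}^{2}k_{2}-2k_{1}+k_{2}=0$.

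The main obstacle is this last computation, together with verifying that no other element (notably $(123)$, which would force a $\Z/3\Z$ symmetry of the four points) can arise. For $(123)$ we would show that the induced map on $\Pic(X)$ forces a real automorphism of $Q_{2,2}$ cyclically permuting three of the relevant fibre classes. That is impossible because, by Figure \ref{Fig:Figure_4_Galois_action_on_conic_bundles_Q22(0,2)}, $\mathcal{R}_{3}$ is distinguished from $\mathcal{R}_{1},\mathcal{R}_{2}$ by the Galois action.
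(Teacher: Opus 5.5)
Your plan for part (1) and for the element $(12)(45)$ follows the paper. The generators of $A_0$ come from real birational involutions of $Q_{2,2}$ with base-points $p,\overline{p},q,\overline{q}$. The element $(12)(45)$ comes from a real birational involution with base-points $p,\overline{p}$ that exchanges $q$ and $\overline{q}$. In affine coordinates the paper's map is $(x,y)\mapsto\bigl(\frac{x-y}{1+xy},-y\bigr)$, and it sends $q$ to $\overline{q}$ exactly when $k_1^2k_2-2k_1+k_2=0$. Your remark that $\sigma$ commutes with the lattice action can in fact replace the explicit formulas in (1). Indeed, $\sigma$ acts on $\Pic(X)$ exactly as the element $(0,0,0,1,1)$ of the abelian group $G_{X_{\mathbb{C}}}$. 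So for every $g\in G_{X_{\mathbb{C}}}$, $\sigma g\sigma$ and $g$ act identically on $\Pic(X)$, and faithfulness of $\Aut(X_{\mathbb{C}})$ on $\Pic$ gives $A_0=G_{X_{\mathbb{C}}}$. There is one small slip: for $\gamma_1$ the non-trivial component is given by $|2f_1+f_2-E_p-E_{\overline{p}}-E_q-E_{\overline{q}}|$, as your own Step 1 says, not by $|f_1+2f_2-\cdots|$, which belongs to $\gamma$.

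The genuine gap is the exclusion of $(123)$, and hence of $A'\cong\Z/3\Z$ or $\Sym_3$. Your argument rests on a misreading of the Galois action. $\sigma$ fixes $f_1$ and $f_2$ and swaps $E_p\leftrightarrow E_{\overline{p}}$, $E_q\leftrightarrow E_{\overline{q}}$. It therefore fixes both members of each of $\mathcal{R}_1,\mathcal{R}_2,\mathcal{R}_3$, and swaps the two members of $\mathcal{R}_4$ and of $\mathcal{R}_5$. Nothing distinguishes $\mathcal{R}_3$ from $\mathcal{R}_1,\mathcal{R}_2$; that is exactly why $A'$ a priori lies in $\Sym\{\mathcal{R}_1,\mathcal{R}_2,\mathcal{R}_3\}\times\Sym\{\mathcal{R}_4,\mathcal{R}_5\}$, 3-cycles included, as you set up yourself.

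Nor can a 3-cycle come from a real automorphism of $Q_{2,2}$. It carries $\mathcal{R}_3\ni f_2$ to $\mathcal{R}_1$, whose members $f_1+f_2-E_p-E_{\overline{p}}$ and $f_1+f_2-E_q-E_{\overline{q}}$ are not ruling classes, so base-points are unavoidable.

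The paper handles this as follows. Modulo $A_0$ there are four candidate lattice actions, not two, and two of them have non-integral matrices. The other two would be induced by a real birational map of order $3$ with base-points $p,\overline{p}$ fixing $q,\overline{q}$, or by one of order $6$ with base-points $p,\overline{p},q,\overline{q}$. An explicit computation shows these exist only if $k_1\in\{0,\pm i\sqrt{3},\pm i\sqrt{3}/3\}$, which is impossible for $k_1\in\,]0,1[$. Your Step 3 only gestures at this (``and so on''), and the shortcut you actually give fails, so as written the proposal does not rule out a 3-cycle in $A'$.
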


\begin{proof}
(\ref{it:item_(1)_Proposition_Aut(X)_Q22(0,2)}) We have to show the existence of automorphisms of $X$ corresponding to the elements $\gamma_1, \gamma_2, \gamma_3$ and $\gamma$ in $(\Z/2\Z)^{5}$.\\
Consider the real birational involution $\phi_{1} : ([u_{0}:u_{1}],[v_{0}:v_{1}]) \dashmapsto ( [-u_{0}:u_{1}],[k_{1}(1-k_{1}k_{2})u_{0}^{2}v_{1}+k_{2}(k_{1}^{2}-1)u_{0}u_{1}v_{0}+(k_{1}-k_{2})u_{1}^{2}v_{1}:k_{2}((k_{1}^{2}-1)u_{0}u_{1}v_{1}+k_{1}(k_{1}-k_{2})u_{0}^{2}v_{0}+(1-k_{1}k_{2})u_{1}^{2}v_{0})] )$ of $\p^{1}_{\mathbb{R}} \times \p^{1}_{\mathbb{R}}$, whose base-points are $p, \overline{p}, q, \overline{q}$. It lifts by blowing up these points to a biregular morphism of $X$ defined over $\mathbb{R}$, and whose action on the Picard group of $X$ with respect to the basis $\lbrace f_{1},f_{2},E_{p},E_{\overline{p}},E_{q},E_{\overline{q}} \rbrace$ is given by the matrix \[ \small \begin{pmatrix} 1 & 2 & 1 & 1 & 1 & 1 \\ 0 & 1 & 0 & 0 & 0 & 0 \\ 0 & -1 & 0 & -1 & 0 & 0 \\ 0 & -1 & -1 & 0 & 0 & 0 \\ 0 & -1 & 0 & 0 & 0 & -1 \\ 0 & -1 & 0 & 0 & -1 & 0 \end{pmatrix}.\] It follows that this automorphism is in the kernel of $\rho$ and its action on the pairs of conic bundle structures is the one of the element $\gamma_{1}=(1,0,1,0,0)$. We then consider the real birational involution $\phi_{2} : ([u_{0}:u_{1}],[v_{0}:v_{1}]) \dashmapsto ([A_{0}(u_{0},u_{1}):A_{1}(u_{0},u_{1})],[B_{0}(u_{0},u_{1},v_{0},v_{1}):B_{1}(u_{0},u_{1},v_{0},v_{1})])$ of $\p^{1}_{\mathbb{R}} \times \p^{1}_{\mathbb{R}}$, with
\begin{align*}
A_{0}(u_{0},u_{1}) &= u_{1},\\
A_{1}(u_{0},u_{1}) &= -k_{1}u_{0},\\
B_{0}(u_{0},u_{1},v_{0},v_{1}) &= (k_{1}^{2}-1)u_{0}u_{1}v_{1}+k_{1}(k_{1}-k_{2})u_{0}^{2}v_{0}+(1-k_{1}k_{2})u_{1}^{2}v_{0},\\
B_{1}(u_{0},u_{1},v_{0},v_{1}) &= k_{1}(1-k_{1}k_{2})u_{0}^{2}v_{1}+k_{2}(k_{1}^{2}-1)u_{0}u_{1}v_{0}+(k_{1}-k_{2})u_{1}^{2}v_{1},
\end{align*} and whose base-points are again $p, \overline{p}, q$ and $\overline{q}$. By blowing up these points it lifts to an automorphism of $X$ defined over $\mathbb{R}$ whose action on $\Pic(X)$ with respect to the basis $\lbrace f_{1},f_{2},E_{p},E_{\overline{p}},E_{q},E_{\overline{q}} \rbrace$ is given by the matrix \[ \small \begin{pmatrix} 1 & 2 & 1 & 1 & 1 & 1 \\ 0 & 1 & 0 & 0 & 0 & 0 \\ 0 & -1 & 0 & 0 & -1 & 0 \\ 0 & -1 & 0 & 0 & 0 & -1 \\ 0 & -1 & -1 & 0 & 0 & 0 \\ 0 & -1 & 0 & -1 & 0 & 0 \end{pmatrix}.\] It follows that this automorphism is in the kernel of $\rho$ and its action on the pairs of conic bundle structures is given by $\langle \gamma_{2}=(0,0,1,1,0) \rangle$. One can show in a similar way the existence of an automorphism of $X$ corresponding to the element $ \gamma_{3}=(0,0,1,0,1)$ by lifting a well-chosen real birational involution of $\p^{1}_{\mathbb{R}} \times \p^{1}_{\mathbb{R}}$, since the action of such an automorphism on the Picard group of $X$ is given by the matrix \[ \small \begin{pmatrix} 1 & 2 & 1 & 1 & 1 & 1 \\ 0 & 1 & 0 & 0 & 0 & 0 \\ 0 & -1 & 0 & 0 & 0 & -1 \\ 0 & -1 & 0 & 0 & -1 & 0 \\ 0 & -1 & 0 & -1 & 0 & 0 \\ 0 & -1 & -1 & 0 & 0 & 0 \end{pmatrix}\] with respect to the basis $\lbrace f_{1},f_{2},E_{p},E_{\overline{p}},E_{q},E_{\overline{q}} \rbrace$. Finally, consider the real birational involution $\phi : ([u_{0}:u_{1}],[v_{0}:v_{1}]) \dashmapsto ([C_{0}(u_{0},u_{1},v_{0},v_{1}):C_{1}(u_{0},u_{1},v_{0},v_{1})],[D_{0}(u_{0},u_{1},v_{0},v_{1}):D_{1}(u_{0},u_{1},v_{0},v_{1})])$ of $\p^{1}_{\mathbb{R}} \times \p^{1}_{\mathbb{R}}$, where
\begin{align*}
C_{0}(u_{0},u_{1},v_{0},v_{1}) &= k_{1}(1-k_{2}^{2})u_{0}v_{0}v_{1} + k_{2}(k_{1}k_{2}-1)u_{1}v_{0}^{2} + (k_{1}-k_{2})u_{1}v_{1}^{2},\\
C_{1}(u_{0},u_{1},v_{0},v_{1}) &= -k_{1}((k_{1}k_{2}-1)u_{0}v_{1}^{2} + k_{2}(k_{1}-k_{2})u_{0}v_{0}^{2} + (1-k_{2}^{2})u_{1}v_{0}v_{1}),\\
D_{0}(u_{0},u_{1},v_{0},v_{1}) &= k_{1}(k_{1}k_{2}-1)u_{0}^{2}v_{1} + k_{2}(1-k_{1}^{2})u_{0}u_{1}v_{0} + (k_{2}-k_{1})u_{1}^{2}v_{1},\\
D_{1}(u_{0},u_{1},v_{0},v_{1}) &= k_{2}((k_{1}^{2}-1)u_{0}u_{1}v_{1} + k_{1}(k_{1}-k_{2})u_{0}^{2}v_{0} + (1-k_{1}k_{2})u_{1}^{2}v_{0}),
\end{align*} and whose base-points are $p, \overline{p}, q$ and $\overline{q}$. It lifts by blowing up these four points to an automorphism of $X$ defined over $\mathbb{R}$ which is in the kernel of $\rho$ and whose action on the pairs $\mathcal{R}_{i}$'s is the one of the element $\gamma=(0,1,1,0,0)$. Indeed, its action on the Picard group of $X$ with respect to the basis $\lbrace f_{1},f_{2},E_{p},E_{\overline{p}},E_{q},E_{\overline{q}} \rbrace$ is given by the matrix \[ \small \begin{pmatrix} 1 & 2 & 1 & 1 & 1 & 1 \\ 2 & 1 & 1 & 1 & 1 & 1 \\ -1 & -1 & 0 & -1 & -1 & -1 \\ -1 & -1 & -1 & 0 & -1 & -1 \\ -1 & -1 & -1 & -1 & 0 & -1 \\ -1 & -1 & -1 & -1 & -1 & 0 \end{pmatrix}.\] This yields the claim of point (\ref{it:item_(1)_Proposition_Aut(X)_Q22(0,2)}).

(\ref{it:item_(2)_Proposition_Aut(X)_Q22(0,2)}) Recall that $A'$ is a subgroup of $\langle (123),(12) \rangle \times \langle (45) \rangle \cong \D_{6}$ isomorphic to $\lbrace \id \rbrace, \Z/2\Z, \Z/3\Z$ or $\Sym_{3}$. We show that the elements $(12), (45)$ and $(123)$ do not belong to the image, while the element $(12)(45)$ does if and only if $k_{1}^{2}k_{2}-2k_{1}+k_{2}=0$.\\ If there were an automorphism of $X$ exchanging the pairs $\mathcal{R}_{1}$ and $\mathcal{R}_{2}$, then its action would be given either by \[ (1) \, \myinline{

},\] and we may assume that the action is as in (1) since we can compose an automorphism whose action is as in (2) with an element of $A_{0}$ that corresponds to $\gamma_{1}+\gamma=(1,1,0,0,0)$ to obtain an automorphism whose action is that in (1). To study the action of such an automorphism on the pairs $\mathcal{R}_{3}, \mathcal{R}_{4}, \mathcal{R}_{5}$, we first look at the pairs $(\mathcal{R}_{4},\mathcal{R}_{5})$ and we may assume that its action on it is given by \, (3) \, \myinline{%
} since we can compose an automorphism whose action is given by \, (4) \, \myinline{%
} with an element of $A_{0}$ corresponding to $\gamma_{2}+\gamma_{3}=(0,0,0,1,1)$ to obtain an automorphism whose action is that in (3). We can argue the same way about its action on the pairs $(\mathcal{R}_{3},\mathcal{R}_{5})$ and $(\mathcal{R}_{3},\mathcal{R}_{4})$ by composing with automorphisms of $A_{0}$ corresponding to $\gamma_{3}=(0,0,1,0,1)$ and $\gamma_{2}=(0,0,1,1,0)$, respectively. 
Summarising, we can focus on the following two cases: \begin{center} %
 \end{center}
In case (a), the action of such an automorphism on the Picard group of $X$ with respect to the basis $\lbrace f_{1},f_{2},E_{p},E_{\overline{p}},E_{q},E_{\overline{q}} \rbrace$ is given by the matrix \[\small %
;\]
that is, if it exists, an automorphism of $X$ of type (a) comes from the lift of a birational involution of $\p^{1}_{\mathbb{R}} \times \p^{1}_{\mathbb{R}}$ that fixes $q, \overline{q}$, and with (two) base-points $p, \overline{p}$. One can check by explicit computations that such a birational involution does not exist; otherwise it would imply $k_{2}=\pm i$ and $k_{1}=\pm 1$, which is not possible under the choice of the points $p, \overline{p}, q, \overline{q}$ (see Lemma \ref{lem:Lemma_choice_of_points_Q22(0,2)}). Case (b) is not even an automorphism of the Picard group because the matrix corresponding to an action described in (b), with respect to the basis $\lbrace f_{1},f_{2},E_{p},E_{\overline{p}},E_{q},E_{\overline{q}} \rbrace$, is \small\[ 
 \notin \GL_{6}(\mathbb{Z}).\] Therefore, an automorphism that acts as $(12)$ does not belong to the image of $\rho$.

Now, we proceed in the same way for automorphisms acting as $(45)$ on the pairs of conic bundle structures: since we can compose with automorphisms in $A_{0}$ corresponding to $\gamma_{2}+\gamma_{3}=(0,0,0,1,1), \gamma_{1}+\gamma=(1,1,0,0,0), \gamma_{1}=(1,0,1,0,0)$ and $\gamma=(0,1,1,0,0)$, we may assume that the actions of such automorphisms of $X$ are of the form \begin{center} %
 \end{center}
In both cases (a) and (b), looking at pairs $\mathcal{R}_{2}, \mathcal{R}_{3}$, we see that $f_{1}, f_{2}$ are preserved and that $E_{p}+E_{\overline{p}}+E_{q}+E_{\overline{q}}$ is therefore preserved. In case (a), we see that $E_{p}+E_{\overline{p}}$ and $E_{q}+E_{\overline{q}}$ are preserved by looking at pair $\mathcal{R}_{1}$, and that $E_{p}+E_{q}$ and $E_{p}+E_{\overline{q}}$ are exchanged as are $E_{\overline{p}}+E_{\overline{q}}$ and $E_{\overline{p}}+E_{q}$ by looking at pairs $\mathcal{R}_{4}, \mathcal{R}_{5}$. This gives that an automorphism $\alpha$ of $X$ of type (a) comes from an automorphism $\alpha'$ of $\p^{1}_{\mathbb{R}} \times \p^{1}_{\mathbb{R}}$ preserving the fibrations, fixing the points $p, \overline{p}$, and exchanging $q$ with $\overline{q}$. So $\alpha'$ would be of the form $\alpha' : (x,y) \mapsto (Ax,By)$ with $(A,B) \in (\PGL_{2}(\mathbb{R}))^{2}$ and such that $A, B$ satisfy $$A 
 \right),$$ and since $\alpha'$ interchanges $q$ with $\overline{q}$, then we get $$A %
.$$ Hence, $a_{1}=b_{1}=0$ and $k_{1}^{2}=k_{2}^{2}=-1$, which is not possible by assumption on the real parameters $k_{1}, k_{2}$. Case (b) is not possible because the matrix of the action described in (b) on $\Pic(X)$, with respect to the basis $\lbrace f_{1},f_{2},E_{p},E_{\overline{p}},E_{q},E_{\overline{q}} \rbrace$, is \small\[ %
 \notin \GL_{6}(\mathbb{Z}),\] which shows that it is not even an automorphism of the Picard group.

If there is an automorphism of $X$ acting as $(123)$ on the five pairs of conic bundles, we may assume that its action on the pairs $\mathcal{R}_{1}, \mathcal{R}_{2}, \mathcal{R}_{3}$ is given by \[ (1) \, \myinline{%
},\] by composing with elements of $A_{0}$ corresponding to $\gamma_{1}=(1,0,1,0,0), \gamma_{1}+\gamma=(1,1,0,0,0)$ and $\gamma=(0,1,1,0,0)$. On the pairs $\mathcal{R}_{4}, \mathcal{R}_{5}$, the action of such an automorphism is given either by \[ (3) \,\, \myinline{%
}, \]
and we may assume that the action is as in (3) or as in (5) since we can compose an automorphism whose action is as in (4) or as in (6) with the element of $A_{0}$ corresponding to $\gamma_{2}+\gamma_{3}=(0,0,0,1,1)$ to obtain an automorphism whose action is that in (3) and (5), respectively. To summarize, we need to focus on the following four cases: \begin{center}
%
 \end{center}

In case (a), the induced action on the Picard group of $X$ with respect to the basis $\lbrace f_{1},f_{2},E_{p},E_{\overline{p}},E_{q},E_{\overline{q}} \rbrace$ is given by the matrix \small\[ %
,\] so if it exists, such an automorphism is given by the lift of a real birational transformation of $\p^{1}_{\mathbb{R}} \times \p^{1}_{\mathbb{R}}$ of order $3$ with base-points $p, \overline{p}$, which fixes the points $q, \overline{q}$, and which contracts the curves $f_{1 \overline{p}}, f_{1p}$ onto $p, \overline{p}$, respectively. One can check by explicit computations that such a birational map of $\p^{1}_{\mathbb{R}} \times \p^{1}_{\mathbb{R}}$ exists only if $k_{1}^{2}k_{2}-2k_{1}+k_{2}=0$ and $k_{1} \in \lbrace 0 , \pm i \sqrt{3} , \pm i \frac{\sqrt{3}}{3} \rbrace$, or if $k_{1}^{2}-2k_{1}k_{2}+1=0$ and $k_{1} \in \lbrace \pm i \sqrt{3} , \pm i \frac{\sqrt{3}}{3} \rbrace$, which both give impossible conditions on the real parameters $k_{1}, k_{2}$ (see Lemma \ref{lem:Lemma_choice_of_points_Q22(0,2)}). 
Cases (b) and (c) do not even define automorphisms of the Picard group since the matrices corresponding to the actions described in (b) and (c), with respect to the basis $\lbrace f_{1},f_{2},E_{p},E_{\overline{p}},E_{q},E_{\overline{q}} \rbrace$, are \small\[ \begin{pmatrix} 2 & 1 & 1 & 1 & 1 & 1 \\ 1 & 1 & 0 & 0 & 1 & 1 \\ -1 & 0 & \frac{1}{2} & -\frac{1}{2} & -\frac{1}{2} & -\frac{1}{2} \\ -1 & 0 & -\frac{1}{2} & \frac{1}{2} & -\frac{1}{2} & -\frac{1}{2} \\ -1 & -1 & -\frac{1}{2} & -\frac{1}{2} & -\frac{1}{2} & -\frac{3}{2} \\ -1 & -1 & -\frac{1}{2} & -\frac{1}{2} & -\frac{3}{2} & -\frac{1}{2} \end{pmatrix}, \begin{pmatrix} 0 & 1 & 0 & 0 & 0 & 0 \\ 1 & 1 & 1 & 1 & 0 & 0 \\ 0 & -1 & -\frac{1}{2} & -\frac{1}{2} & -\frac{1}{2} & \frac{1}{2} \\ 0 & -1 & -\frac{1}{2} & -\frac{1}{2} & \frac{1}{2} & -\frac{1}{2} \\ 0 & 0 & -\frac{1}{2} & \frac{1}{2} & \frac{1}{2} & \frac{1}{2} \\ 0 & 0 & \frac{1}{2} & -\frac{1}{2} & \frac{1}{2} & \frac{1}{2} \end{pmatrix} \notin \GL_{6}(\mathbb{Z}).\] In case (d), the matrix of the induced action on $\Pic(X)$ with respect to the basis $\lbrace f_{1},f_{2},E_{p},E_{\overline{p}},E_{q},E_{\overline{q}} \rbrace$ is \small\[ \begin{pmatrix} 2 & 1 & 1 & 1 & 1 & 1 \\ 1 & 1 & 0 & 0 & 1 & 1 \\ -1 & 0 & 0 & 0 & -1 & 0 \\ -1 & 0 & 0 & 0 & 0 & -1 \\ -1 & -1 & -1 & 0 & -1 & -1 \\ -1 & -1 & 0 & -1 & -1 & -1 \end{pmatrix},\] meaning that if such an automorphism of $X$ exists, it comes from a real birational map of $\p^{1}_{\mathbb{R}} \times \p^{1}_{\mathbb{R}}$ of order $6$ with four base-points $p$, $\overline{p}$, $q$, $\overline{q}$, and which contracts the curves $f_{2q}$ onto $p$, $f_{2\overline{q}}$ onto $\overline{p}$, $f_{p q \overline{q}}$ onto $q$ and $f_{\overline{p} q \overline{q}}$ onto $\overline{q}$. We check by explicit computations that such a birational transformation exists only if $k_{1}^{2}k_{2}+k_{2}-2k_{1}=0$ and $k_{1} \in \left\{ 0 , \pm i\sqrt{3} , \pm i \frac{\sqrt{3}}{3} \right\}$, or if $k_{1}^{2}-2k_{1}k_{2}+1=0$ and $k_{1} \in \left\{ \pm i \sqrt{3} , \pm i \frac{\sqrt{3}}{3} \right\}$, which are both impossible conditions on the real parameters $k_{1}$ and $k_{2}$ (see Lemma \ref{lem:Lemma_choice_of_points_Q22(0,2)}). So, an element of type $(123)$ does not belong to the image of $\rho$.

Finally, we can see that the action of an automorphism of type $(12)(45)$ is, up to composition with elements of $A_{0}$ that correspond to $\gamma_{1}+\gamma=(1,1,0,0,0)$ and to $\gamma_{2}+\gamma_{3}=(0,0,0,1,1)$, of the form \begin{center} 
 \end{center}
In case (a), the induced action of such an automorphism on the Picard group of $X$ with respect to the basis $\lbrace f_{1},f_{2},E_{p},E_{\overline{p}},E_{q},E_{\overline{q}} \rbrace$ is given by the matrix \small\[ %
.\] If such an automorphism of $X$ exists, it comes from a real birational involution $\phi$ of $\p^{1}_{\mathbb{R}} \times \p^{1}_{\mathbb{R}}$ with base-points $p, \overline{p}$, and which interchanges $q$ with $\overline{q}$. It contracts the curves $f_{2 p}$ onto $\overline{p}$ and $f_{2 \overline{p}}$ onto $p$. 
We can check by explicit computations that such a birational involution exists exactly if $k_{^1}^{2}k_{2}-2k_{1}+k_{2}=0$ and is of the form $\phi : ([u_{0}:u_{1}],[v_{0}:v_{1}]) \dashmapsto ([u_{1}v_{1}+u_{0}v_{0}:-u_{0}v_{1}+u_{1}v_{0}],[-v_{0}:v_{1}])$. Case (b) is not possible because the matrix of an action described in (b) on $\Pic(X)$ with respect to the basis $\lbrace f_{1},f_{2},E_{p},E_{\overline{p}},E_{q},E_{\overline{q}} \rbrace$ is    
\small\[ \begin{pmatrix} 1 & 2 & 1 & 1 & 1 & 1 \\ 1 & 1 & 1 & 1 & 0 & 0 \\ -1 & -1 & -\frac{1}{2} & -\frac{3}{2} & -\frac{1}{2} & -\frac{1}{2} \\ -1 & -1 & -\frac{3}{2} & -\frac{1}{2} & -\frac{1}{2} & -\frac{1}{2} \\ 0 & -1 & -\frac{1}{2} & -\frac{1}{2} & -\frac{1}{2} & \frac{1}{2} \\ 0 & -1 & -\frac{1}{2} & -\frac{1}{2} & \frac{1}{2} & -\frac{1}{2} \end{pmatrix} \notin \GL_{6}(\mathbb{Z}),\] which shows that it is not even an automorphism of the Picard group. This yields the claim of point (\ref{it:item_(2)_Proposition_Aut(X)_Q22(0,2)}).    
     
\end{proof}

\begin{remark} In this case, one can see from the description of families of automorphisms given in Proposition \ref{Prop:Proposition_Aut(X)_Q22(0,2)} that the action of $\Gal(\mathbb{C}/\mathbb{R})$ on the Picard group of $X$ (see Figure \ref{Fig:Figure_4_Galois_action_on_conic_bundles_Q22(0,2)}), where $X$ is a quartic del Pezzo surface of type $Q_{2,2}(0,2)$, is always realized by an automorphism of $X$ contained in the kernel of $\rho : \Aut_{\mathbb{R}}(X) \rightarrow \Sym_{5}$.\end{remark}

\subsubsection{Case 5: $X \cong Q_{2,2}(4,0)$} To finish, let $X \cong \p^{2}_{\mathbb{R}}(5,0) \cong Q_{2,2}(4,0)$ be the real rational quartic del Pezzo surface obtained by blowing up $\p^{2}_{\mathbb{R}}$ at five real points $p_{1},\dots,p_{5}$ in general position. By first blowing up $\p^{2}_{\mathbb{R}}$ in two of the five points $p_{i}$, and then blowing down the strict transform of the line passing through these two points onto $\p^{1}_{\mathbb{R}} \times \p^{1}_{\mathbb{R}} \cong Q_{2,2}$, we see that $X$ is isomorphic to the blow-up of $Q_{2,2}$ at four general real points denoted by $p, q, r, s$. 
In what follows we will work with one or other of the descriptions of $X$, according to our convenience.

In this case, and using the same notations as in the previous paragraph, a basis of $\Pic(X)$ is given by $\lbrace f_{1},f_{2},E_{p},E_{q},E_{r},E_{s} \rbrace$ and the five pairs of conic bundle structures on $X$ are then given by \begin{align*}
\mathcal{R}_{1} &:= \lbrace f_{1}+f_{2}-E_{p}-E_{q} \ , \ f_{1}+f_{2}-E_{r}-E_{s} \rbrace,\\
\mathcal{R}_{2} &:= \lbrace f_{1}+f_{2}-E_{p}-E_{r} \ , \ f_{1}+f_{2}-E_{q}-E_{s} \rbrace,\\
\mathcal{R}_{3} &:=\lbrace f_{1}+f_{2}-E_{p}-E_{s} \ , \ f_{1}+f_{2}-E_{q}-E_{r} \rbrace,\\
\mathcal{R}_{4} &:= \lbrace f_{1} \ , \ f_{1}+2f_{2}-E_{p}-E_{q}-E_{r}-E_{s} \rbrace,\\
\mathcal{R}_{5} &:= \lbrace f_{2} \ , \ 2f_{1}+f_{2}-E_{p}-E_{q}-E_{r}-E_{s} \rbrace.
\end{align*}

Let us now describe geometrically families of automorphisms in $\Aut_{\mathbb{R}}(X)$.

\begin{lemma}
Let $p, q, r, s \in \p^{1}_{\mathbb{R}} \times \p^{1}_{\mathbb{R}} \cong Q_{2,2}$ be four real points such that the blow-up of $Q_{2,2}$ in $p, q, r, s$ is a real del Pezzo surface. Then, up to an automorphism of $Q_{2,2}$, the points can be chosen to be $p=([1:0],[1:0])$, $q=([0:1],[0:1])$, $r=([1:1],[1:1])$ and $s=([1:\mu_{1}],[1:\mu_{2}])$ for some $\mu_{1}, \mu_{2} \in \mathbb{R} \setminus \lbrace 0,1 \rbrace$, $\mu_{1} \neq \mu_{2}$.
\label{lem:Lemma_choice_of_points_Q22(4,0)} 
\end{lemma}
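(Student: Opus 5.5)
The plan is to use the product structure $\Aut_{\mathbb{R}}(Q_{2,2}) \supset \PGL_{2}(\mathbb{R}) \times \PGL_{2}(\mathbb{R})$ and normalise the two projections independently, in the same spirit as Lemma \ref{lem:Lemma_choice_of_points_Q31(2,1)} and Lemma \ref{lem:Lemma_choice_of_points_Q31(4,0)}. Write the four real points as $p=(x_{p},y_{p})$, $q=(x_{q},y_{q})$, $r=(x_{r},y_{r})$, $s=(x_{s},y_{s})$ with $x_{\ast},y_{\ast} \in \p^{1}(\mathbb{R})$.

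The first step is to record the general position conditions. Since $X$ is a del Pezzo surface, no fibre of either projection contains two of the blown-up points; otherwise the strict transform of that fibre would be a curve of self-intersection $\leq -2$. Hence $x_{p},x_{q},x_{r},x_{s}$ are pairwise distinct points of $\p^{1}(\mathbb{R})$, and likewise $y_{p},y_{q},y_{r},y_{s}$. Moreover, no curve of bidegree $(1,1)$ passes through all four points, since its strict transform would have self-intersection $2-4=-2$.

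The second step is the normalisation. Since $\PGL_{2}(\mathbb{R})$ acts simply transitively on ordered triples of distinct points of $\p^{1}(\mathbb{R})$, there is a unique $A \in \PGL_{2}(\mathbb{R})$ sending $x_{p},x_{q},x_{r}$ to $[1:0],[0:1],[1:1]$, and a unique $B$ doing the same for $y_{p},y_{q},y_{r}$. Then $(A,B)$ is an automorphism of $Q_{2,2}$ sending $p,q,r$ to $([1:0],[1:0])$, $([0:1],[0:1])$, $([1:1],[1:1])$. Write the image of $s$ as $([a:b],[c:d])$. Distinctness from $[0:1]$ gives $a\neq 0$ and $c \neq 0$, so $s=([1:\mu_{1}],[1:\mu_{2}])$ with $\mu_{1},\mu_{2}$ real. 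Distinctness from $[1:0]$ and $[1:1]$ gives $\mu_{i}\neq 0,1$.

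The final step is to show $\mu_{1}\neq\mu_{2}$. If $\mu_{1}=\mu_{2}$, the diagonal $\{u_{0}v_{1}-u_{1}v_{0}=0\}$ is a real $(1,1)$-curve through all four points, contradicting the first step. There is no real obstacle here. The only care needed is to justify properly the general position conditions for the blow-up of $\p^{1}\times\p^{1}$, which are the analogue of the del Pezzo condition on $\p^{2}$ transported via the birational map $\p^{2}_{\mathbb{R}}(5,0)\simeq Q_{2,2}(4,0)$. One could also note that the swap $\tau$ is not needed for this normalisation.
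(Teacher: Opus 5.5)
Your proof is correct and is essentially the paper's argument: you normalise each factor independently by $\PGL_{2}(\mathbb{R})$, then exclude $\mu_{i}\in\{0,1\}$ (two points on a common ruling) and $\mu_{1}=\mu_{2}$ (the diagonal $\{u_{0}v_{1}-u_{1}v_{0}=0\}$ is a $(1,1)$-curve through all four points). The only differences are cosmetic: the paper first fixes $p,q$ and then uses diagonal matrices to move $r$ to $([1:1],[1:1])$, whereas you invoke simple $3$-transitivity directly, and you justify the general-position conditions by self-intersection computations where the paper only states them.
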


\begin{proof}
We may assume that $p=([1:0],[1:0])$ and $q=([0:1],[0:1])$, and that the points $r$, $s$ are of the form $r=([1:\lambda],[1:\nu])$, $s=([1:\zeta],[1:\eta])$ for some $\lambda, \nu, \zeta, \eta \in \mathbb{R}^{*}$, since the points are not on the same fibres by any projection by hypothesis. We take $$ (A,B) = \left( \begin{pmatrix} \lambda & 0 \\ 0 & 1 \end{pmatrix},\begin{pmatrix} \nu & 0 \\ 0 & 1\end{pmatrix} \right) \in \, \PGL_{2}(\mathbb{R})^{2}.$$ It defines an automorphism of $\p^{1}_{\mathbb{R}} \times \p^{1}_{\mathbb{R}} \cong Q_{2,2}$ that fixes $p$ and $q$, and that sends $r$ and $s$ onto $([1:1],[1:1])$ and $([1:\mu_{1}],[1:\mu_{2}])$ respectively, where $\mu_{1}=\zeta / \lambda$ and $\mu_{2}=\eta / \nu$.\\ Notice that if $\mu_{1}=0$ or $\mu_{2}=0$, the points $p$ and $s$ are on the same fibres, and so are the points $r$ and $s$ if $\mu_{1}=1$ or $\mu_{2}=1$; and that if $\mu_{1}=\mu_{2}$, the curve of bidegree $(1,1)$ of $\p^{1}_{\mathbb{R},u_{0}u_{1}} \times \p^{1}_{\mathbb{R},v_{0}v_{1}}$ given by $\lbrace u_{0}v_{1}-u_{1}v_{0}=0\rbrace$ is passing through the four points. So, the surface obtained by blowing up the points $p, q, r, s$ is not del Pezzo.
\end{proof}

\begin{proposition}
Let $X \cong \p^{2}_{\mathbb{R}}(5,0) \cong Q_{2,2}(4,0)$ be a real del Pezzo surface of degree $4$.
\begin{enumerate}
\item The kernel $A_{0}$ of the homomorphism $\rho : \Aut_{\mathbb{R}}(X) \rightarrow \Sym_{5}$ is isomorphic to $(\Z/2\Z)^{4}$, and it is generated by the elements $\gamma_{1}=(1,0,0,0,1)$, $\gamma_{2}=(0,1,0,0,1)$, $\gamma_{3}=(0,0,1,0,1)$ and $\gamma=(0,0,0,1,1)$. 
\label{it:item_(1)_Proposition_Aut(X)_Q22(4,0)}
\item The image $A'$ of the homomorphism $\rho$ is either generated by a double-transposition or generated by a double-transposition and a $5$-cycle or it is trivial. More precisely, $A'=\langle (13245) , (12)(45) \rangle \cong \D_{5}$ if and only if $(\mu_{1},\mu_{2}) \in \mathcal{S} = \left\{ \left( \frac{1-\sqrt{5}}{2} , \frac{3-\sqrt{5}}{2} \right) , \left( \frac{1+\sqrt{5}}{2} , \frac{3+\sqrt{5}}{2} \right) \right\}$, $A'=\langle (12)(45) \rangle \cong \Z/2\Z$ if and only if $\mu_{1}+\mu_{2}-\mu_{1}\mu_{2}=0$, $(\mu_{1},\mu_{2}) \notin \mathcal{S}$, and $A'=\lbrace \id \rbrace$ otherwise.
\label{it:item_(2)_Proposition_Aut(X)_Q22(4,0)}
\end{enumerate}
Furthermore, the elements $\gamma_{1}, \gamma_{2}, \gamma_{3}$ and $\gamma$ are realized in $A_{0}$ as the lifts of some birational quadratic involutions of $\p^{2}_{\mathbb{R}}$. The elements $\tau_{1}=(12)(45)$ and $\tau_{2}=(13245)$ can be realized as the lifts on $X$ of an involution of $Q_{2,2}$ and of a birational map of $Q_{2,2}$ of order five with base-points $p$ and $r$, respectively.
\label{Prop:Proposition_Aut(X)_Q22(4,0)}
\end{proposition}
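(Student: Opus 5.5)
Here all four points and $Q_{2,2}$ are real, so $\sigma$ acts trivially on $\Pic(X_{\mathbb{C}})$. Hence $\Aut_{\mathbb{R}}(X)$ is the subgroup of $\Aut(X_{\mathbb{C}})$ of automorphisms commuting with the real structure. By Proposition \ref{prop:Structure_Aut(X_C)}, $G_{X_{\mathbb{C}}}\cong(\Z/2\Z)^{4}$, so the task for (\ref{it:item_(1)_Proposition_Aut(X)_Q22(4,0)}) is to show that every element of $G_{X_{\mathbb{C}}}$ is defined over $\mathbb{R}$.

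For this I will use the model $X\cong\p^{2}_{\mathbb{R}}(5,0)$ with real points $p_{1},\dots,p_{5}$. The pairs $\mathcal{R}_{i}$ are relabelled so that they correspond to the pencils through the $p_{i}$. For each $i$ there is the classical de Jonquières birational involution of $\p^{2}$ that preserves the pencil of lines through $p_{i}$ and the pencil of conics through the other four points, and exchanges the two intersection points of a line with a conic. Its base points are among the five real $p_{j}$, so it is defined over $\mathbb{R}$. Alternatively one can use the standard quadratic involution $\sigma_{ijk}$ with real base points composed with a real projectivity, or the involutions produced in the $\p^{1}\times\p^{1}$ model exactly as in Case 4, by lifting explicit birational involutions of bidegree type with base points $p,q,r,s$.

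I then compute the action of these lifts on $\Pic(X)$ in the basis $\{f_{1},f_{2},E_{p},E_{q},E_{r},E_{s}\}$. This should show that the four chosen lifts realize $\gamma_{1},\gamma_{2},\gamma_{3},\gamma$. These four elements are linearly independent in the even-weight subspace, so $A_{0}=G_{X_{\mathbb{C}}}\cong(\Z/2\Z)^{4}$.

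For (\ref{it:item_(2)_Proposition_Aut(X)_Q22(4,0)}), I use that $A'$ is the group of real projectivities permuting $\{p_{1},\dots,p_{5}\}$, equivalently the elements of $H_{X_{\mathbb{C}}}$ that commute with $\sigma$. Because $A_{0}$ is the full $(\Z/2\Z)^{4}$, every element of $\Sym_{5}$ in the image lifts to something in any coset, so no Picard-integrality case analysis is needed. It suffices to decide which permutations $h\in\Sym_{5}$ are realized by a real automorphism.

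The permutation $(12)(45)$ preserves the split $\{\mathcal{R}_{4},\mathcal{R}_{5}\}$ after swapping, so it should come from an automorphism of $Q_{2,2}$ exchanging the two rulings. I look for $\alpha'(x,y)=(By,Ax)$ with $A,B\in\PGL_{2}(\mathbb{R})$ fixing $p$ and $s$ and swapping $q$ and $r$, as in Case 3. Solving the linear conditions should give exactly the relation $\mu_{1}+\mu_{2}-\mu_{1}\mu_{2}=0$.

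The remaining possibilities come from list \eqref{list_of_possible_group_for_Im_Aut(X_C)}. A real subgroup of order 3 or 4, or $\Sym_{3}$, would force a real projectivity permuting five real points with a 3-cycle or 4-cycle. I rule these out, or reduce them to the two above, by the same cross-ratio computations on $\p^{1}_{\mathbb{R}}$.

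The main obstacle is the $\D_{5}$ case. An element of order $5$ is a real projectivity cyclically permuting the $p_{i}$. In the quadric model it is a birational map of order $5$ with base points $p,r$. I would compute it as a real projectivity of $\p^{2}$ sending $p_{1}\mapsto p_{3}\mapsto p_{2}\mapsto p_{4}\mapsto p_{5}\mapsto p_{1}$. Fixing four of the points as a frame determines it, and the fifth condition yields quadratic equations in $(\mu_{1},\mu_{2})$ whose real solutions should be exactly the golden-ratio set $\mathcal{S}$ (note $\mathcal{S}$ satisfies $\mu_{1}+\mu_{2}=\mu_{1}\mu_{2}$, as it must). Finally, I check that $\D_{5}$ is the only group in the list containing a $5$-cycle, which gives the trichotomy.
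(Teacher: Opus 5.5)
Your route differs from the paper's and, after a few repairs, it works. The paper stays in the $Q_{2,2}$-model. For each candidate permutation it normalizes by elements of $A_0$, discards half of the sign patterns because the induced matrix on $\Pic(X)$ is not integral, and settles the rest by explicit computations with self-maps of $Q_{2,2}$. These include birational maps of order $6$ (for $3$-cycles) and of order $4$ and $8$ (for $4$-cycles).

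You should push your first observation to its conclusion. Since $\sigma$ acts trivially on $\Pic(X_{\mathbb{C}})$ and $\Aut(X_{\mathbb{C}})\to\mathrm{O}(\Pic(X_{\mathbb{C}}))$ is injective, $\sigma g\sigma=g$ for every $g$, so $\Aut_{\mathbb{R}}(X)=\Aut(X_{\mathbb{C}})$. Then (1) follows at once from Blanc's structure result, and $A'=H_{X_{\mathbb{C}}}$. This is the group of projectivities permuting the five real points; they are automatically real, since they send a real frame to a real frame. Equivalently, it is the group of elements of $\PGL_{2}(\mathbb{R})$ permuting five labelled real points on the conic through them. This removes all integrality checks and most computations.

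The paper's method, in exchange, also handles the other four real forms, where $\sigma$ acts non-trivially and $\Aut_{\mathbb{R}}(X)\neq\Aut(X_{\mathbb{C}})$. It also yields the $Q_{2,2}$-descriptions of the ``Furthermore'' clause directly; in your approach these still have to be read off from the action of a suitable lift on $\Pic(X)$. Your de Jonquières involutions are cubic (class $3L-2E_i-\sum_{j\neq i}E_j$) and realize $(1,1,1,1,1)+e_i$, not the $\gamma$'s. For the ``quadratic'' claim, use your alternative, $\sigma_{ijk}$ composed with a diagonal projectivity, which is exactly the paper's $[x:y:z]\mapsto[ayz:bxz:cxy]$.

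Three points need fixing.

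First, for $4$-cycles the exclusion cannot be ``the same'' computation on the factors of $\p^1_{\mathbb{R}}\times\p^1_{\mathbb{R}}$. A $4$-cycle stabilizes no pair of the $p_i$, so no lift of it is biregular on any $Q_{2,2}$-model. The right $\p^1$ is the conic. Elements of order $3$ or $4$ in $\PGL_{2}(\mathbb{R})$ have no real fixed point, whereas a $3$- or $4$-cycle fixes one of the five real points. The same argument excludes transpositions, which you omitted: three fixed points force the identity.

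Second, $\Z/5\Z$ is in the list, so ``$\D_5$ is the only group containing a $5$-cycle'' is false. What gives $\D_5$ is your own remark that $\mathcal{S}$ lies on $\mu_1+\mu_2=\mu_1\mu_2$, so $(12)(45)$ is realized too, together with maximality of $\D_5$ in the list.

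To make the computation concrete, contract the five $(-1)$-curves meeting $E_s$, so that $E_s$ becomes the conic. Use coordinates $t=u_1/u_0$, $t'=v_1/v_0$, with $f_1$ the class of $\{t=\mathrm{const}\}$. The five labelled points are the slopes $dt'/dt$ at $s$ of the two fibres and of the three $(1,1)$-curves through $s$ and two of $p,q,r$, each labelled by the pair containing $E_s+D$. Explicitly they are $\infty,0,A,AB,B$ for $\mathcal{R}_4,\mathcal{R}_5,\mathcal{R}_1,\mathcal{R}_2,\mathcal{R}_3$, where $A=\mu_2/\mu_1$ and $B=(\mu_2-1)/(\mu_1-1)$. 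Then $(12)(45)$ is realized iff $B=A^2$, i.e.\ $\mu_1+\mu_2=\mu_1\mu_2$. And $(13245)$ is realized iff $B=A+1=A^2$, i.e.\ $(\mu_1,\mu_2)\in\mathcal{S}$.

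Third, like the paper, you only test the representatives $(12)(45)$ and $(13245)$. The same computation shows $(13)(45)$ is realized iff $AB=1$, i.e.\ $\mu_1+\mu_2=1$, which the paper's proof also finds. So ``$A'=\{\id\}$ otherwise'' is only correct up to relabelling the points. The conditions in (2) are conditions for these particular representatives.
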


\begin{proof}
(\ref{it:item_(1)_Proposition_Aut(X)_Q22(4,0)}) The proof of the existence of automorphisms of $X$ that correspond to the elements $\gamma_{1}, \gamma_{2}, \gamma_{3}$ and $\gamma$ is analogous to the one of point (\ref{it:item_(2)_Proposition_structure_Aut(X_C)}) of Proposition \ref{prop:Structure_Aut(X_C)} in the complex case. Indeed, using the same notations as in Section \ref{Sec:subsection_classical_statement_over_the_complex}, we can consider the birational quadratic involution $\lambda : [x:y:z] \dashmapsto [ayz:bxz:cxy]$ of $\p^{2}_{\mathbb{R}}$ whose base-points are $p_{1}=[1:0:0]$, $p_{2}=[0:1:0]$, $p_{3}=[0:0:1]$ and which exchanges $p_{4}=[1:1:1]$ and $p_{5}=[a:b:c]$. Note that by blowing up $p_{1}, p_{2}, p_{3}, p_{4}, p_{5}$, the lift of $\lambda$ acts biregularly on $X$ and its induced action on the Picard group of $X$ with respect to the basis $\lbrace L,E_{1},E_{2},E_{3},E_{4},E_{5} \rbrace$ is given by the matrix \[ \small \begin{pmatrix} 2 & 1 & 1 & 1 & 0 & 0 \\ -1 & 0 & -1 & -1 & 0 & 0 \\ -1 & -1 & 0 & -1 & 0 & 0 \\ -1 & -1 & -1 & 0 & 0 & 0 \\ 0 & 0 & 0 & 0 & 0 & 1 \\ 0 & 0 & 0 & 0 & 1 & 0 \end{pmatrix}.\] So this automorphism of $X$ is in the kernel of $\rho$ and its action on the five pairs of conic bundle structures $\mathcal{R}_{i}$ is given by $\langle \gamma=(0,0,0,1,1) \rangle$. By changing the role of the $p_{i}$'s and proceeding in the same way, we get automorphisms of $X$ contained in $A_{0}$, whose induced actions on the $\mathcal{R}_{i}$'s are the ones of the elements $\gamma_{1}=(1,0,0,0,1)$, $\gamma_{2}=(0,1,0,0,1)$ and $\gamma_{3}=(0,0,1,0,1)$ in $(\Z/2\Z)^{5}$.

(\ref{it:item_(2)_Proposition_Aut(X)_Q22(4,0)}) We first show that there is no automorphism acting as a transposition on the pairs $\mathcal{R}_{i}$'s. Indeed, if there were an automorphism of $X$ of type $(12)$ for instance, we may assume that its action on the pairs $\mathcal{R}_{1}, \mathcal{R}_{2}$ is given by \[ (0) \, \myinline{
} \] by composing with the element of $A_{0}$ corresponding to $\gamma_{1}+\gamma_{2}=(1,1,0,0,0)$; and with respect to the action on the last three pairs $\mathcal{R}_{3}, \mathcal{R}_{4}, \mathcal{R}_{5}$ we may assume that its action is given either by \[ (1) \, \myinline{%
}, \] since we can compose with elements of $A_{0}$ corresponding to $\gamma_{3}=(0,0,1,0,1)$, $\gamma=(0,0,0,1,1)$ and $\gamma_{3}+\gamma=(0,0,1,1,0)$. So finally, we can focus on the following two cases: \begin{center} %
 \end{center}
In case (a), the induced action on the Picard group of $X$ with respect to the basis $\lbrace f_{1},f_{2},E_{p},E_{q},$ $E_{r},E_{s} \rbrace$ is such that $f_{1}, f_{2}, E_{p}, E_{s}$ are preserved, and $E_{q}$ and $E_{r}$ are exchanged; meaning that if such an automorphism exists, it comes from an automorphism of $\p^{1}_{\mathbb{R}} \times \p^{1}_{\mathbb{R}}$ that exchanges $q$ with $r$, and that fixes $p$ and $s$ as well as the two fibrations given by the two projections. Such an automorphism would be of the form $$(A,B) \in \PGL_{2}(\mathbb{R})^{2} \,\, \text{with} \,\, A = B = 
$$ and it would require $\mu_{1}=\mu_{2}=2$, which gives a contradiction on the parameters $\mu_{1}, \mu_{2}$ (see Lemma \ref{lem:Lemma_choice_of_points_Q22(4,0)}). Case (b) is not possible because the matrix of an action described in (b) on $\Pic(X)$ with respect to the basis $\lbrace f_{1},f_{2},E_{p},E_{q},E_{r},E_{s} \rbrace$ is \[ \small %
 \notin \GL_{6}(\mathbb{Z}),\] which shows that it is not even an automorphism of the Picard group. So, an element of type $(12)$ does not belong to the image of $\rho$.

We now study the existence of automorphisms acting as a double-transposition on the pairs of conic bundle structures. If there is an automorphism $\alpha$ of $X$ of type $(13)(45)$ for instance, we may assume that its action on the $\mathcal{R}_{i}$'s is one of the following \begin{center} %
 \end{center}
since we can compose with elements of $A_{0}$ that correspond in $(\Z/2\Z)^{5}$ to $\gamma_{1}+\gamma_{3}=(1,0,1,0,0)$, $\gamma=(0,0,0,1,1)$ and $\gamma_{1}+\gamma_{3}+\gamma=(1,0,1,1,1)$. In case (a), we see that $f_{1}$ and $f_{2}$ are exchanged by looking at pairs $\mathcal{R}_{4}$ and $\mathcal{R}_{5}$, and so $f_{1}+f_{2}$ is preserved. This implies that $E_{p}+E_{r}$ and $E_{q}+E_{s}$ are preserved by looking at the pair $\mathcal{R}_{2}$, and that $E_{p}+E_{q}$ with $E_{p}+E_{s}$ are interchanged and so are $E_{r}+E_{s}$ with $E_{r}+E_{q}$ by looking at the pairs $\mathcal{R}_{1}, \mathcal{R}_{3}$. It means that an automorphism of type (a), if it exists, comes from an automorphism $\alpha'$ of $\p^{1}_{\mathbb{R}} \times \p^{1}_{\mathbb{R}}$ exchanging the two fibrations, fixing the points $p$ and $r$, and exchanging $q$ with $s$. Such an automorphism is of the form $$\alpha' : (u,v) \mapsto (Bv,Au) \,\, \text{with} \,\, (A,B)=\left(
\right),$$ that is $\alpha' : ([u_{0}:u_{1}],[v_{0}:v_{1}]) \mapsto ([(\mu_{1}-1)v_{0}+v_{1}:\mu_{1}v_{1}),((\mu_{2}-1)u_{0}+u_{1}:\mu_{2}u_{1})$, and where $\mu_{1}, \mu_{2} \in \mathbb{R}\setminus\lbrace 0,1\rbrace$ satisfy the relation $\mu_{1}+\mu_{2}=1$. Case (b) is not possible because the action described in (b) does not even define an automorphism of the Picard group. Indeed, the matrix of its induced action on $\Pic(X)$ with respect to the basis $\lbrace f_{1},f_{2},E_{p},E_{q},E_{r},E_{s} \rbrace$ is \[ \small %
 \notin \GL_{6}(\mathbb{Z}).\] It shows that the element $(13)(45)$ does belong to the image of $\rho$ if and only if $\mu_{1}+\mu_{2}=1$.\label{par:paragraph_existence_autom_type_(13)(45)}

In addition, and for what remains, we also show that the element $(12)(45)$ belongs to the image of $\rho$ if and only if $\mu_{1}+\mu_{2}-\mu_{1}\mu_{2}=0$. Indeed, if there is an automorphism $\beta$ of $X$ of type $(12)(45)$, we may assume that its action on the pairs $\mathcal{R}_{i}$'s is one of the following two \begin{center} %
  \end{center} by composing with the elements of $A_{0}$ corresponding to $\gamma_{1}+\gamma_{2}=(1,1,0,0,0)$, $\gamma=(0,0,0,1,1)$ and $\gamma_{1}+\gamma_{2}+\gamma=(1,1,0,1,1)$. In both cases (a) and (b), $f_{1}$ and $f_{2}$ are exchanged, so $f_{1}+f_{2}$ is fixed (and so are $f_{1}+2f_{2}$ and $2f_{1}+f_{2}$). Looking at pairs $\mathcal{R}_{1}, \mathcal{R}_{2}$, we see that $E_{p}+E_{q}$ and $E_{p}+E_{r}$ are exchanged, and so are $E_{r}+E_{s}$ and $E_{q}+E_{s}$. In case (a), looking at the pair $\mathcal{R}_{3}$, we see that $E_{p}+E_{s}$ and $E_{q}+E_{r}$ are fixed. This implies that $E_{p}, E_{s}$ are fixed and that $E_{q}$ with $E_{r}$ are exchanged. So an automorphism of type $(12)(45)$ in case (a) comes from an automorphism $\beta'$ of $\p^{1}_{\mathbb{R}} \times \p^{1}_{\mathbb{R}}$ exchanging the two fibrations, fixing the points $p$ and $s$, and exchanging $q$ with $r$. The automorphism $\beta'$ is then given by $(x,y) \mapsto (By,Ax)$, where $(A,B) \in \PGL_{2}(\mathbb{R})^{2}$, and satisfies the conditions $\beta'(p)=p$, $\beta'(q)=r$, $\beta'(r)=q$ and $\beta'(s)=s$. This gives $$(A,B)=\left( 
 \right),$$ that is $\beta' : ([u_{0}:u_{1}],[v_{0}:v_{1}]) \mapsto ([v_{0}-v_{1}:-v_{1}],[u_{0}-u_{1}:-u_{1}])$, with the closed condition $\mu_{1}+\mu_{2}-\mu_{1}\mu_{2}=0$. Case (b) is not possible because the matrix of the action described in (b) on the Picard group of $X$, with respect to the basis $\lbrace f_{1},f_{2},E_{p},E_{q},E_{r},E_{s} \rbrace$, is \[ \small %
 \notin \GL_{6}(\mathbb{Z}),\] and this shows that it is not even an automorphism of $\Pic(X)$.

We continue explaining why there is no automorphism acting as a $3$-cycle on the pairs of conic bundle structures. If there were an automorphism $\alpha$ of $X$ of type $(123)$, for instance, its action on the pairs $\mathcal{R}_{4}, \mathcal{R}_{5}$ would be given either by \[ (1) \,\, \myinline{%
}, \] and we may assume that the action is as in (1) or as in (3) since we can compose an automorphism whose action is as in (2) or as in (4) with the element of $A_{0}$ corresponding to $\gamma=(0,0,0,1,1)$ to obtain an automorphism whose action is that in (1) and (3), respectively. With respect to the pairs $\mathcal{R}_{1}, \mathcal{R}_{2}, \mathcal{R}_{3}$, we may assume that its action is as in \[ (5) \,  \myinline{%
} \] by composing with elements of $A_{0}$ that correspond to $\gamma_{1}+\gamma_{2}=(1,1,0,0,0)$, $\gamma_{1}+\gamma_{3}=(1,0,1,0,0)$ and $\gamma_{2}+\gamma_{3}=(0,1,1,0,0)$. Summarising, we need to focus on the following cases: \begin{center}
%
 \end{center}
In both cases (a) and (c), we see that $f_{1}$ and $f_{2}$ are preserved by looking at pairs $\mathcal{R}_{4}, \mathcal{R}_{5}$, and hence $f_{1}+f_{2}$ is preserved. In case (a), looking at the action on pairs $\mathcal{R}_{1}, \mathcal{R}_{2}, \mathcal{R}_{3}$, we see that $E_{p}+E_{q} \mapsto E_{p}+E_{r} \mapsto E_{p}+E_{s} \mapsto E_{p}+E_{q}$ and $E_{r}+E_{s} \mapsto E_{q}+E_{s} \mapsto E_{q}+E_{r} \mapsto E_{r}+E_{s}$. 
This implies that $E_{p}$ is preserved and that $E_{q} \mapsto E_{r} \mapsto E_{s} \mapsto E_{q}$ are cyclically permuted. So an automorphism of type (a), if it exists, comes from an automorphism $\alpha'$ of $\p^{1}_{\mathbb{R}} \times \p^{1}_{\mathbb{R}}$ that preserves the two rulings, fixes $p=([1:0],[1:0])$, and permutes cyclically $q=([0:1],[0:1])$, $r=([1:1],[1:1])$, $s=([1:\mu_{1}],[1:\mu_{2}])$. The automorphism $\alpha'$ is then given by $(x,y) \mapsto (Ax,By)$, where $(A,B) \in \PGL_{2}(\mathbb{R})^{2}$ satisfies $$ A 
.$$ That would imply $$(A,B) = \left( %
 \right)$$ with $\mu_{1}, \mu_{2}$ satisfying $\mu_{1}^{2}-\mu_{1}+1=0$ and $\mu_{2}^{2}-\mu_{2}+1=0$, which has no real solutions, hence is not possible by Lemma \ref{lem:Lemma_choice_of_points_Q22(4,0)}. Case (c) is not even an automorphism of the Picard group since the matrix corresponding to an action described in (c) with respect to the basis $\lbrace f_{1},f_{2},E_{p},E_{q},E_{r},E_{s} \rbrace$ is \[ \small %
 \notin \GL_{6}(\mathbb{Z}).\] 
In both cases (b) and (d), we can note that $f_{2}$ is preserved and that $f_{1}$ together with $f_{1}+2f_{2}-E_{p}-E_{q}-E_{r}-E_{s}$ are exchanged by looking at pairs $\mathcal{R}_{4}$ and $\mathcal{R}_{5}$. In case (d), the induced action of such an automorphism on the Picard group of $X$ with respect to the basis $\lbrace f_{1},f_{2},E_{p},E_{q},E_{r},E_{s} \rbrace$ is given by the matrix \[ \small %
.\] So, if such an automorphism exists, it comes from a real birational map $\psi$ of $\p^{1}_{\mathbb{R}} \times \p^{1}_{\mathbb{R}}$ of order $6$ with $4$ base-points, $p$, $q$, $r$ and $s$, and which contracts $f_{2p}$ onto $p$, $f_{2s}$ onto $q$, $f_{2q}$ onto $r$ and $f_{2r}$ onto $s$. One can check by explicit computations that such a birational map exists only if $\mu_{1}^{2}-\mu_{1}+1=0$ and $\mu_{1}\mu_{2}-\mu_{1}+1=0$, which leads to a contradiction on the real parameters $\mu_{1}, \mu_{2}$ (see Lemma \ref{lem:Lemma_choice_of_points_Q22(4,0)}). Case (b) is not possible because the matrix of the action described in (b) on the Picard group of $X$ with respect to the basis $\lbrace f_{1},f_{2},E_{p},E_{q},E_{r},E_{s} \rbrace$ is \[ \small 
 \notin \GL_{6}(\mathbb{Z}),\] which shows that it is not even an automorphism of $\Pic(X)$.

Now we show that there is no automorphism acting as a $4$-cycle on the five pairs of conic bundle structures. Indeed, if there were an automorphism of $X$ of type $(1435)$ for instance, we may assume that its action on the $\mathcal{R}_{i}$'s is one of the following four \begin{center} %
 \end{center} 
since we can compose with the elements of $A_{0}$ corresponding to $\gamma_{1}+\gamma_{3}+\gamma=(1,0,1,1,1)$, $\gamma_{3}+\gamma=(0,0,1,1,0)$, $\gamma_{3}=(0,0,1,0,1)$, $\gamma=(0,0,0,1,1)$, $\gamma_{1}+\gamma_{3}=(1,0,1,0,0)$, $\gamma_{1}+\gamma=(1,0,0,1,0)$ and $\gamma_{1}=(1,0,0,0,1)$ to bring us back to automorphisms of $X$ whose action on the pairs is that in (a), (b), (c) and (d). In case (a), the induced action of such an automorphism on the Picard group of $X$ with respect to the basis $\lbrace f_{1},f_{2},E_{p},E_{q},E_{r},E_{s} \rbrace$ is given by the matrix \small\[ \begin{pmatrix} 1 & 1 & 1 & 0 & 0 & 1 \\ 1 & 1 & 1 & 1 & 0 & 0 \\ -1 & -1 & -1 & -1 & 0 & -1 \\ 0 & -1 & -1 & 0 & 0 & 0 \\ 0 & 0 & 0 & 0 & 1 & 0 \\ -1 & 0 & -1 & 0 & 0 & 0 \end{pmatrix}.\] Then if such an automorphism exists, it comes from a real birational map $\psi$ of $\p^{1}_{\mathbb{R}} \times \p^{1}_{\mathbb{R}}$ of bidegree $(1,1)$ of order $4$, with $3$ base-points $p, q, s$, and which fixes the point $r$. It is such that $\psi^{2}$ induces an automorphism of $X$ of type $(13)(45)$, which exists if and only if $\mu_{1}+\mu_{2}=1$ as we saw previously. Since the rational inverse of $\psi$ contracts $f_{2p}$ onto $q$, $f_{1p}$ onto $s$ and $f_{pqs}$ onto $p$, one deduces that $\psi$ contracts the curves $f_{1p}$ onto $q$, $f_{2p}$ onto $s$ and $f_{pqs}$ onto $p$. One can check by explicit computations that such a birational map exists exactly if $\mu_{1}+\mu_{2}=1$ and $2\mu_{1}^{2}-2\mu_{1}+1=0$, which gives an impossible condition on the real parameters $\mu_{1}, \mu_{2}$ (see Lemma \ref{lem:Lemma_choice_of_points_Q22(4,0)}). Cases (b) and (c) are not possible either because the matrices corresponding to the actions described in (b) and (c) on the Picard group of $X$, with respect to the basis $\lbrace f_{1},f_{2},E_{p},E_{q},E_{r},E_{s} \rbrace$, are \[ \small \begin{pmatrix} 1 & 1 & 1 & 0 & 0 & 1 \\ 1 & 1 & 1 & 1 & 0 & 0 \\ -1 & -1 & -\frac{3}{2} & -\frac{1}{2} & -\frac{1}{2} & -\frac{1}{2} \\ 0 & -1 & -\frac{1}{2} & -\frac{1}{2} & \frac{1}{2} & -\frac{1}{2} \\ 0 & 0 & -\frac{1}{2} & \frac{1}{2} & \frac{1}{2} & \frac{1}{2} \\ -1 & 0 & -\frac{1}{2} & -\frac{1}{2} & \frac{1}{2} & -\frac{1}{2} \end{pmatrix} \, , \, \small \begin{pmatrix} 1 & 1 & 1 & 0 & 0 & 1 \\ 1 & 1 & 0 & 0 & 1 & 1 \\ -1 & -1 & -\frac{1}{2} & -\frac{1}{2} & -\frac{1}{2} & -\frac{3}{2} \\ 0 & -1 & -\frac{1}{2} & \frac{1}{2} & -\frac{1}{2} & -\frac{1}{2} \\ 0 & 0 & \frac{1}{2} & \frac{1}{2} & \frac{1}{2} & -\frac{1}{2} \\ -1 & 0 & -\frac{1}{2} & \frac{1}{2} & -\frac{1}{2} & -\frac{1}{2} \end{pmatrix} \notin \GL_{6}(\mathbb{Z})\]
respectively, which shows that these are not even automorphisms of the Picard group. In case (d), the induced action on the Picard group of $X$ with respect to the basis $\lbrace f_{1},f_{2},E_{p},E_{q},E_{r},E_{s} \rbrace$ is given by the matrix \small\[ \begin{pmatrix} 1 & 1 & 1 & 0 & 0 & 1 \\ 1 & 1 & 0 & 0 & 1 & 1 \\ -1 & -1 & -1 & 0 & -1 & -1 \\ 0 & -1 & 0 & 0 & 0 & -1 \\ 0 & 0 & 0 & 1 & 0 & 0 \\ -1 & 0 & 0 & 0 & 0 & -1 \end{pmatrix}.\] So, if such an automorphism exists, it comes from a real birational map $\varphi$ of $\p^{1}_{\mathbb{R}} \times \p^{1}_{\mathbb{R}}$ of order 8 with $3$ base-points $p, r, s$, which sends the point $q$ to $r$, and which contracts the curves $f_{prs}$ onto $p$, $f_{1s}$ onto $q$ and $f_{2s}$ onto $s$. One can check by explicit computations that a birational transformation with such characteristics exists only if $\mu_{1}+\mu_{2}=1$ and $2\mu_{2}^{2}-2\mu_{2}+1=0$, which is not possible due to the choice of the parameters $\mu_{1}, \mu_{2}$.

Finally, we study the existence of automorphisms acting as a $5$-cycle on the pairs of conic bundle structures. For instance, the action of an automorphism $\alpha$ of $X$ of type $(13245)$ on the $\mathcal{R}_{i}$'s may be assumed to be of the form \begin{center} 
 \end{center}
since we can compose with elements in the kernel of $\rho$ (with three $0$ and two $1$, or with one $0$ and four $1$) to obtain an automorphism of $X$ whose action on the pairs is that in (a) or (b). In case (a), the matrix of the induced action of such an automorphism $\Pic(X)$ with respect to the basis $\lbrace f_{1},f_{2},E_{p},E_{q},E_{r},E_{s} \rbrace$ is \[\small %
.\] So, if such an automorphism exists, it comes from a real birational map $\psi$ of $\p^{1}_{\mathbb{R}} \times \p^{1}_{\mathbb{R}}$ of order $5$ with $2$ base-points, $p$ and $r$, which sends the points $q$ to $s$ and $s$ to $r$, and which contracts the curves $f_{1r}$ onto $p$ and $f_{1p}$ onto $q$. We check by explicit computations that such a birational transformation $\psi$ exists exactly if $\left\{ \mu_{1}=\frac{1-\sqrt{5}}{2} , \mu_{2}=\frac{3-\sqrt{5}}{2} \right\}$ or if $\left\{ \mu_{1}=\frac{1+\sqrt{5}}{2} , \mu_{2}=\frac{3+\sqrt{5}}{2} \right\}$, and it is given by the lift of the birational maps \begin{align*} \psi_{1} : ([u_{0}:u_{1}],[v_{0},v_{1}]) &\dashmapsto ([2(v_{1}-v_{0})u_{1}:(\sqrt{5}-1)v_{1}(u_{0}-u_{1})],[2u_{1}:(\sqrt{5}-3)(u_{0}-u_{1})]); \\ \psi_{2} : ([u_{0}:u_{1}],[v_{0},v_{1}]) &\dashmapsto ([-2(v_{1}-v_{0})u_{1}:(\sqrt{5}+1)v_{1}(u_{0}-u_{1})],[2u_{1}:-(\sqrt{5}+3)(u_{0}-u_{1})]),
\end{align*} respectively. Case (b) is not possible because the matrix of the action described in (b) on the Picard group of $X$, with respect to the basis $\lbrace f_{1},f_{2},E_{p},E_{q},E_{r},E_{s} \rbrace$, is \[ \small \begin{pmatrix} 2 & 1 & 1 & 1 & 1 & 1 \\ 1 & 1 & 0 & 1 & 0 & 1 \\ -1 & 0 & -\frac{1}{2} & -\frac{1}{2} & \frac{1}{2} & -\frac{1}{2} \\ -1 & 0 & \frac{1}{2} & -\frac{1}{2} & -\frac{1}{2} & -\frac{1}{2} \\ -1 & -1 & -\frac{1}{2} & -\frac{1}{2} & -\frac{1}{2} & -\frac{3}{2} \\ -1 & -1 & -\frac{1}{2} & -\frac{3}{2} & -\frac{1}{2} & -\frac{1}{2} \end{pmatrix} \notin \GL_{6}(\Z),\] which shows that it is not even an automorphism of $\Pic(X)$.          

\end{proof}

\begin{proof}[Proof of Theorem \ref{Theorem:Theorem_main_result}]
The result is a consequence of Lemmas \ref{lem:Lemma_choice_of_points_Q31(0,2)_Rob}, \ref{lem:Lemma_choice_of_points_Q31(2,1)}, \ref{lem:Lemma_choice_of_points_Q31(4,0)}, \ref{lem:Lemma_choice_of_points_Q22(0,2)}, \ref{lem:Lemma_choice_of_points_Q22(4,0)}, and Propositions \ref{Prop:Proposition_Aut(X)_Q31(0,2)}, \ref{Prop:Proposition_Aut(X)_Q31(2,1)}, \ref{Prop:Proposition_Aut(X)_Q31(4,0)}, \ref{Prop:Proposition_Aut(X)_Q22(0,2)} and \ref{Prop:Proposition_Aut(X)_Q22(4,0)}.
\end{proof}


\Addresses

\end{document}